\documentclass[11pt, a4paper, twoside, reqno]{amsart}
\usepackage[utf8]{inputenc}
\usepackage[T1]{fontenc}
\usepackage{lmodern}
\usepackage{microtype}
\usepackage[frak=boondox]{mathalpha}
\usepackage{dsfont}
\usepackage{bbold}
\usepackage{upgreek}
\usepackage{mathrsfs}
\usepackage{euscript}
\usepackage{amssymb}

\usepackage[top=1in, bottom=1in, left=1in, right=1in, headheight=15pt, footskip=40pt]{geometry}
\usepackage{enumitem}
\usepackage{parskip}
\usepackage{float}
\usepackage{caption}
\usepackage{tabto}
\usepackage{color}
\usepackage{mdframed}
\usepackage{hyphenat}
\usepackage{comment}

\usepackage{amsmath, amssymb, amsthm, mathtools}
\usepackage{extarrows}
\usepackage{tikz-cd}
\usepackage[all,cmtip]{xy} 
\usepackage{graphicx}

\usepackage[numbers]{natbib}
\setcitestyle{open={},close={}}
\makeatletter
\renewcommand{\@biblabel}[1]{[#1]\hfill}
\makeatother

\usepackage[hidelinks]{hyperref}
\usepackage[nameinlink]{cleveref}
\newtheoremstyle{mystyle}
  {\topsep}    
  {\topsep}   
  {\itshape}   
  {}           
  {\bfseries}   
  {.}          
  {.5em}        
  {}            

\newtheoremstyle{spacedremark} 
  {\topsep}    
  {\topsep}     
  {\normalfont} 
  {}           
  {\bfseries}   
  {.}          
  {.5em}        
  {}               

\theoremstyle{mystyle}
\newtheorem{thm}{Theorem}[subsection]\crefname{thm}{Theorem}{Theorems}
\newtheorem{lem}[thm]{Lemma}\crefname{lem}{Lemma}{Lemmas}

\newtheorem{prop}[thm]{Proposition}\crefname{prop}{Proposition}{Propositions}
\newtheorem{cor}[thm]{Corollary}\crefname{cor}{Corollary}{Corollaries}
\newtheorem{con}[thm]{Construction}
\newtheorem{obs}[thm]{Observation}\crefname{obs}{Observation}{Observations}

\newtheorem{defn}[thm]{Definition}\crefname{defn}{Definition}{Definitions}

\newtheorem{inner-recall-star}{Theorem}
\newenvironment{recall*}[1]
  {\begin{inner-recall-star}[see \Cref{#1}]}
  {\end{inner-recall-star}}
\theoremstyle{spacedremark}
\newtheorem{rem}[thm]{Remark}\crefname{rem}{Remark}{Remarks}
\newtheorem{ex}[thm]{Example}\crefname{ex}{Example}{Examples}
\newtheorem{coex}[thm]{Counterexample}\crefname{coex}{Counterexample}{Counterexamples}

\DeclareMathOperator*{\colim}{colim}

\newcommand{\dcolim}{\varinjlim}
\newcommand{\plim}{\varprojlim}

\DeclareMathAlphabet{\duc}{U}{dutchcal}{m}{n}
\SetMathAlphabet{\duc}{bold}{U}{dutchcal}{b}{n}
\DeclareFontFamily{U}{BOONDOX-calo}{\skewchar\font=45 }
\DeclareFontShape{U}{BOONDOX-calo}{m}{n}{<-> s*[1.0] BOONDOX-r-calo}{}
\DeclareFontShape{U}{BOONDOX-calo}{b}{n}{<-> s*[1.0] BOONDOX-b-calo}{}
\DeclareMathAlphabet{\mal}{U}{BOONDOX-calo}{m}{n}
\SetMathAlphabet{\mal}{bold}{U}{BOONDOX-calo}{b}{n}

\newcommand{\esc}{\EuScript}

\setlist[enumerate]{leftmargin=*, nosep}
\begin{document}

\title[$\mathbb{P}^1$ Homotopy Theory of Schemes]{$\mathbb{P}^1$ Homotopy Theory of Schemes}

\author[Dipankar Maity]{Dipankar Maity}
\address{Department of Mathematical Sciences \\
Indian Institute of Science Education and Research (IISER) Mohali \\ Knowledge city, Sector 81, SAS Nagar, Manauli PO 140306 \\
India}
\email{ph22057@iisermohali.ac.in}
\subjclass[2020]{Primary 14F42; Secondary 18N55, 18N60}
\keywords{Motivic homotopy theory, Localizations and $\infty$-categories}

\begin{abstract}
We study the $\mathbb{P}^1$-local motivic homotopy theory under the Nisnevich topology on $Sm_S$. We show that although the homotopy category $\mathcal{H}^{\mathbb{P}^1}(-)$ admits standard schematic functorialities, it fails six-functor gluing, thereby obstructing a full six-functor formalism. On the unstable side, we prove that the unstable $\mathbb{P}^1$-motivic localization preserves Nisnevich connectivity, despite the absence of an interval structure on $\mathbb{P}^1$. Over perfect fields, we then establish the $\mathbb{P}^1$-invariance of the higher unstable homotopy group sheaves $\pi_i^{\mathbb{P}^1}$ ($i \geq 1$) using the Bloch-Ogus-Gabber theorem. On the stable side, we prove the stable $\mathbb{P}^1$-connectivity theorem over fields. As a consequence, we obtain a homotopy $t$-structure on the stable $\infty$-category of $\mathbb{P}^1$-invariant Nisnevich spectra, and deduce the strict $\mathbb{P}^1$-invariance of stable $\mathbb{P}^1$-homotopy group sheaves.
\end{abstract}
\maketitle

\tableofcontents
\section{Introduction}
For more than two decades, Motivic homotopy theory [\cite{morel19991}], introduced as the homotopical version of Voevodsky's theory of mixed motives [\cite{mazza2006lecture}], has served as a unifying framework for (co)homology theories in algebraic geometry. The requirements were simple: the (co)homologies must satisfy Nisnevich descent and $\mathbb{A}^1$ homotopy invariance. Many well-known cohomology theories found their place in it. (Quillen's) Algebraic $K$-theory (over a regular base), Motivic cohomology, hermitian $K$-theory, and Algebraic cobordism are among the most notable.

Unfortunately, not all cohomology theories are $\mathbb{A}^1$-invariant. One prominent example is the Topological Hochschild homology ($THH$ for short). In fact, the $\mathbb{A}^1$-localization of $THH$ is contractible [\cite{ELMANTO2021106640}, Theorem 2.2.1] (see loc. cit. for more results of this nature). In fact, even Quillen's algebraic $K$-theory is not $\mathbb{A}^1$-invariant over an arbitrary base (its $\mathbb{A}^1$-localization is precisely Weibel's homotopy $K$-theory). It is therefore important to have other frameworks for studying motivic homotopy theory. 

There have mostly been two candidates for the same: 
\begin{enumerate}
    \item The logarithmic motivic homotopy theory [\cite{binda2023logarithmic}],
    \item The theory of (non $\mathbb{A}^1$ invariant) Motivic spectras [\cite{MR4810064}, \cite{annala2022motivic}].
\end{enumerate}
Among these two, the second appears only in spectral form: this is the study of the category of $\mathbb{P}^1$-spectra satisfying elementary blowup excision, denoted $\mathrm{MS}_S:=Sp_{\mathbb{P}^1}^{elb} (S)$ (caution: this isn't stable in general). The first, however, comes in both unstable and stable forms: $$log\mbox{-}\mathcal{H}(S):= L_{\Box}L_{nis}L_{ads}\mathcal{P}(SmlSm_S)\text{ and }log\mbox{-}\mathcal{SH}(S):= L_{\Box}L_{nis}L_{ads}\mathcal{SH}(SmlSm_S),$$ where $SmlSm$ denotes the category of smooth log smooth schemes, $\Box$ denotes the log interval $(\mathbb{P}^1_S, \infty)$, and `$ads$' stands for admissible blowups. 

Both of these candidates contain the (corresponding) ordinary $\mathbb{A}^1$ homotopy categories (stable/unstable) as full subcategories. At the same time, they also host many other natural candidates. However, the only unstable candidate at hand, namely the logarithmic motivic homotopy category, requires lifting the cohomology theory in question to a log version, which is sometimes non-standard. 

There is a third candidate, which is somewhat off from a homotopical viewpoint but allows representation of certain cohomology theories that fail $\mathbb{A}^1$-invariance. It is the category of spaces/cohomologies that are $\mathbb{P}^1$-invariant (in other words, these are weight $0$ cohomologies that are not necessarily homotopy invariant). There could be many different ways to justify its purpose. 

One of those is the famous zero-slice construction of the category of $\mathbb{A}^1$ motivic $ S^1$-spectra. Specifically, $$s_0\mathcal{SH}^{\mathbb{A}^1}_{S^1}=\mathcal{SH}^{\mathbb{A}^1}_{S^1}/f_1\simeq L_{\mathbb{P}^1}\mathcal{SH}^{\mathbb{A}^1}_{S^1}.$$ Introduced by Voevodsky in [\cite{voevodsky2002possible}], this is a powerful tool for obtaining various spectral sequences of motivic nature. In detail, given a motivic spectrum $\mal{A}$, there is a spectral sequence with $E^2$ page given by the cohomology of $L_{\mathbb{P}^1}\mal{A}$ and converging to the cohomology of $\mal{A}$. Another consequence is that, over perfect fields, the above equivalence yields $$\mathcal{SH}^{b}_{S^1}\simeq L_{\mathbb{P}^1}\mathcal{SH}^{\mathbb{A}^1}_{S^1},$$ so that studying birational homotopy is intrinsically linked to studying $\mathbb{P}^1$ localization. 

The other reason is that the $0$-th $\mathbb{P}^1$ homology of the sheaf of K\"ahler differentials vanishes [\cite{ayoub2020P1}, Theorem 6.2 (1)], which guarantees the existence of a purely arithmetic version of the Kodaira-Spencer class map. In fact, using this machinery, he proves that the Kodaira-Spencer class map is non-vanishing.

This paper aims to study the unstable analog of $\mathbb{P}^1$ homotopy theory in greater detail. We shall call a space $\esc{X}\in \mathcal{P}(Sm_S)$ a $\mathbb{P}^1$ motivic space (or projective motivic space) if it satisfies nisnevich descent and is local with respect to the set of projection maps $\{\mathbb{P}^1_X\to X\}_{X\in Sm_S}$. By formal reasons [\cite{lurie2009higher}, Proposition 5.5.4.15], the full subcategory $\mathcal{H}^{\mathbb{P}^1}(S)$ of $\mathcal{P}(Sm_S)$ consisting of $\mathbb{P}^1$-local nisnevich-local objects is an accessible localization. In particular, $\mathcal{H}^{\mathbb{P}^1}(S)$ is again a presentable $\infty$-category. 

As usual, one would like to have a model for the localization functor mentioned above. The first obstacle is that $\mathbb{P}^1$ is not an interval object [in the sense of \cite{morel19991}, Definition 2.3.1] on the site $(Sm_S, nis)$ (see the second paragraph of \S\cref{section 2}). Therefore, it is not possible to have a `singular model' for the $\mathbb{P}^1$-localization as in [\cite{morel19991}, 2.3.2]. Instead, we use a general categorical model from [\cite{hoyois2017six}, Proposition 3.4]. As one would expect:
\begin{recall*}{model for pmot localization}
The nisnevich local model of $\mathbb{P}^1$-localization can be given by $$L_{pmot}\simeq \colim _{n\to \infty }(L_{nis}\circ  L_{\mathbb{P}^1})^n$$
\end{recall*}
The reason the $\mathbb{A}^1$ connectivity property in [\cite{morel19991}, Corollary 2.3.22] holds is that the existence of a singular model guarantees a surjection $\pi_0^{nis}\to \pi_0^{nis}L_{mot}$. It turns out that this still works even if the localization is at an object $I$ that is not necessarily an `interval object' (in the sense of Morel Voevodsky), but just has a rational section $*\to I$. Since $\mathbb{P}^1$ has a rational section, we deduce: 
\begin{recall*}{P1 connectivity theorem}
For any space $\esc{X}$, the canonical map $$\pi_0^{nis}\esc{X}\to \pi_0^{nis}L_{\mathbb{P}^1}\esc{X}=: \pi_0^{\mathbb{P}^1}\esc{X}$$ is a surjective morphism of nisnevich sheaves of sets on $Sm_S$. Consequently, $L_{pmot}$ preserves nisnevich connectivity.
\end{recall*}
In the next subsection, we discuss examples of $\mathbb{P}^1$ motivic spaces. Among these, the important ones are all affine schemes (\Cref{affine algebras are projective local}), in particular the affine line $\mathbb{A}^1$. It should be noted that most affine schemes, including $\mathbb{A}^1$, are not $\mathbb{A}^1$ local. Notably, any unramified presheaf that is $\mathbb{A}^1$-invariant is $\mathbb{P}^1$-invariant \Cref{weak unram a1 is p1}. Another unramified example worth mentioning is the cohomological Brauer group. Examples of higher simplicial homotopy type include the finite etale stack \Cref{fet stack} and all birational local spaces. 

In the next small subsection, we recall from [\cite{0bat}] that a connected Nisnevich space is birational local if and only if it is $\mathbb{P}^1$-local and $\mathbb{A}^1$-local. To further compare with birationality, we introduce the standard notion of strongly $\mathbb{P}^1$/$\mathbb{A}^1$-invariant monoids and then show that:
\begin{recall*}{strong a1 p1 groups are bir}
Let $k$ be a perfect field. A grouplike Nisnevich local monoid space $\esc{G}$ is birational local iff it is strongly $\mathbb{A}^1$ invariant and strongly $\mathbb{P}^1$ invariant.
\end{recall*}
However, we caution that although all $\mathbb{A}^1$-invariant sheaves of groups are automatically $\mathbb{P}^1$-local, a similar claim fails for strong/strict invariance. We discuss the example of $\mathbb{G}_m$, which is both $\mathbb{A}^1$-local and $\mathbb{P}^1$-local, but is only strongly $\mathbb{A}^1$-invariant, not strongly $\mathbb{P}^1$-invariant. In fact, we show that
\begin{recall*}{htype of BGm}
    The canonical forgetful map $[\mathbb{A}^1/\mathbb{G}_m]\to \mathrm{B}_{nis}\mathbb{G}_m$ is a projective motivic equivalence. 
\end{recall*}

Next, we turn to more serious questions about the functoriality of $\mathcal{H}^{\mathbb{P}^1}(-)$ and, in a major setback, find that the obvious functorial behavior of $\mathcal{H}^{\mathbb{P}^1}(-)$ lacks a localization sequence. More specifically, the $\mathbb{P}^1$-local analog of [\cite{morel19991}, Theorem 3.2.21] fails in the $\mathbb{P}^1$-local setting. Another expected property is the purity isomorphism. Unfortunately, at this stage, it is not even clear to me what the correct statement of the $\mathbb{P}^1$-local purity theorem should be. In the absence of this, the $\mathbb{P}^1$-local algebraic topology is weaker even over perfect fields.

Indeed, as would be known to a practitioner of motivic homotopy theory, without purity it is almost impossible to deduce the 'strength' of the local homotopy groups. To be precise, the $\mathbb{A}^1$ purity theorem guarantees that the higher $\mathbb{A}^1$ homotopy group sheaves are strongly/strictly $\mathbb{A}^1$-invariant, so that a connected $\mathbb{A}^1$ motivic space can be built from and decomposed into its simplicial Eilenberg-Maclane layers. However, there are ways to achieve the same result. For example, one can produce an independent proof of the unstable connectivity theorem by an appropriate use of the $\mathbb{A}^1$ singular model [see \cite{ayoub2022algebraic} for details]. Since we neither have a single model nor purity at this time, we are unable to carry out the Postnikov construction and deconstructions in $\mathbb{P}^1$ homotopy theory. What I am able to obtain is the $\mathbb{P}^1$ invariance of higher $\mathbb{P}^1$ homotopy group sheaves. The method is an adaptation of the affine Bloch-Ogus-Gabber treatment to the $\mathbb{P}^1$ setting, which I learned from [\cite{ayoub2020P1}]. The idea is first to prove:
\begin{recall*}{pi0 of grouplike is weak unram}
For any $H$-group-space $\esc{X}$,  $\pi_0^{\mathbb{P}^1}\esc{X}$ is weakly unramified.
\end{recall*}
We then show that, in general, a weakly unramified weakly $\mathbb{P}^1$-invariant presheaf $G$ is $\mathbb{P}^1$-invariant (\Cref{weakly p1 + weakly unramified => p1 invariant}). Thus, we obtain a direct $\mathbb{P}^1$-local analog of [\cite{choudhury2014connectivity}]:
\begin{recall*}{pi0 of grouplike is p1 invariant}
For projective motivic $H$-group-space $\esc{G}$,  $\pi_0^{nis}\esc{G}$ is $\mathbb{P}^1$-invariant.
\end{recall*}
 Repeated application of this to the grouplike spaces $\Omega^i\esc{X}$ then helps us achieve:
\begin{recall*}{p1 inv pf pi_i^p1}
     For any $i\geq 1$ and any space $\esc{X}$, the projective homotopy groups $\pi_i^{\mathbb{P}^1}\esc{X}$ are $\mathbb{P}^1$-invariant.
\end{recall*}
To conclude, we briefly discuss the stabilization of $\mathcal{H}^{\mathbb{P}^1}$ and prove the stable $\mathbb{P}^1$ connectivity theorem over fields. There is no new idea in this section; the material is excerpted from Morel's stable connectivity [\cite{morel2005stable}] and Ayoub's $\mathbb{P}^1$ homological connectivity [\cite{ayoub2020P1}]. These are adapted to the most general setup in another work of mine [\cite{nStbat}], and I use the relevant results of {\textit{loc. cit.}} directly to arrive at the conclusions of the last sections. The only nontrivial point is that the triviality of $\mathbb{P}^1$ stable homotopy groups can be checked on $\mathcal{F}_k$, which follows exactly as in the unstable case. 
\begin{recall*}{projective stable connectivity}
    Over any field $k$, the projective motivic localization functor $L_{pmot}^{sp}$ of $S^1$-spectras, preserves nisnevich $n$ connectivity. 
\end{recall*}
The following are straightforward consequences of the connectivity result thus obtained, which again rely on well-known facts (see [\cite{nStbat}, \S4]): 
\begin{recall*}{t structure for P1 motivic spectras}
The pair of full sub-categories \begin{flalign*}
    (\mathcal{SH}^{\mathbb{P}^1}_{S^1}\bigcap \mathcal{SH}_{S^1,\geq 0},\mathcal{SH}^{\mathbb{P}^1}_{S^1}\bigcap \mathcal{SH}_{S^1,\leq 0})
\end{flalign*} is an accessible complete $t$-structure on $\mathcal{SH}^{\mathbb{P}^1}_{S^1}$. The heart of this $t$-structure coincides with the category of strictly $\mathbb{P}^1$-invariant sheaves of abelian groups.
\end{recall*}
\begin{recall*}{LAYERS OF P1 SPECTRA}
The following conditions are equivalent for a nisnevich sheaf $\mal{A}$ of spectras on $k$
    \begin{enumerate}
        \item $\mal{A}$ is $\mathbb{P}^1$-local.
        \item The nisnevich connective covers $\mal{A}_{\geq n}$ are $\mathbb{P}^1$-local for all $n$.
        \item The nisnevich sheaves of homotopy groups $\pi_n^{sp,nis}\mal{A}$ are strictly $\mathbb{P}^1$ invariant for all $n$.
    \end{enumerate}
  \end{recall*}

\subsection{Notations and terminologies}
Throughout this paper, $S$ will denote a qcqs scheme, and $k$ will denote a field. We do not assume $k$ is perfect unless otherwise specified. By a morphism of schemes, we shall always mean a separated morphism. Smooth morphisms will be assumed to be quasicompact and hence of finite presentation.

We shall freely use the language of $\infty$-categories without referring to any particular model for the theory. The reader may freely use their preferred model. A standard reference is [\cite{lurie2009higher}], where the author models these on Kan complexes as models for $\infty$-groupoids. $Pr^L$ shall mean the $\infty$-category of large presentable $\infty$-categories, while $Cat_{\infty}$ will stand for the $\infty$-category of all large $\infty$-categories. For an $\infty$-category $\mathcal{C}$, by $\mathcal{P}(\mathcal{C})$ we mean the $\infty$-category of presheaves of spaces on $\mathcal{C}$. By $\mathcal{P}(S)$ we shall mean the $\infty$-category of spaces on $Sm_S$. 

We will use the notation $\mathcal{H}^{X}$ for the full subcategory of $\mathcal{P}$ consisting of $X$-local and nisnevich-local objects. The corresponding localization functors will be denoted $L_{X}, L_{nis}, L_{xmot}$, etc.  

By $\pi_0$ (and similarly $\pi_n$), we shall always mean the presheaf of homotopy groups. This is also the $0$th truncation functor for $\mathcal{P}(S)$. $\pi_n^{nis}$ will denote the homotopy groups internal to the nisnevich topos, i.e., the nisnevich sheafification of $\pi_n$. $\pi_i^{X}$ will, as usual, stand for $\pi_i^{nis}L_{xmot}$.

The stabilization of $\mathcal{H}^-$ will be denoted by $\mathcal{SH}_{S^1}^-$ for $-=pre, nis, \mathbb{A}^1, \mathbb{P}^1$, etc., with the standard adjunction $$\Sigma^\infty_{S^1}: \mathcal{H}_\bullet^-\leftrightarrows \mathcal{SH}^-_{S^1}:\Omega^\infty_{S^1}.$$ 

We will use italics $X,Y,Z, R,S,T$ for schemes, calligraphic letters $\mathcal{C},\mathcal{D}$, etc., for categories, $\esc{X}, \esc{Y}$, etc., for spaces, $\mal{X}, \mal{Y}$ for $S^1$ spectra.

\section{Unstable\texorpdfstring{ $\mathbb{P}^1$}{p1}-homotopy theory}\label{section 2}
\subsection{Introduction to $\mathbb{P}^1$ homotopy theory:}
Parallel to unstable $\mathbb{A}^1$-homotopy theory, we define unstable $\mathbb{P}^1$-homotopy theory. The derived version of this notion first appeared in [\cite{ayoub2020P1}].

\begin{defn}
    Let $S$ be a scheme and $\esc{X}\in \mathcal{P}(S)$. We say that $\esc{X}$ is projective-local or $\mathbb{P}^1$-local if, for every $U\in Sm_S$, the restriction map $\esc{X}(U)\to \esc{X}(\mathbb{P}^1_U)$ induced by the projection $\mathbb{P}^1_U\to U$ is an equivalence of $\infty$-groupoids. The full subcategory of $\mathbb{P}^1$-local spaces is denoted $L_{\mathbb{P}^1}\mathcal{P}(S)\subset \mathcal{P}(S)$.
\end{defn}
Since the collection $$\mathbb{P}(S):=\{\mathbb{P}^1\times X\to X\}_{{X\in Sm_S}}$$ is small and $\mathcal{P}(S)$ is presentable [\cite{lurie2009higher}, Remark 5.5.3.7], [\cite{lurie2009higher}, Proposition 5.5.4.15] implies that the category $L_{\mathbb{P}^1}\mathcal{P}(S)$ is presentable and a localizing subcategory. Let us denote the corresponding localization functor by $L_{\mathbb{P}^1}$.

Recall that a presheaf $\esc{X}\in \mathcal{P}(S)$ is Nisnevich local if it takes Nisnevich squares to pullback squares of $\infty$-groupoids and sends the empty scheme to the contractible space [\cite{hoyois2015quadratic}, Appendix C2]. We denote the full subcategory of Nisnevich-local spaces by $L_{nis}\mathcal{P}(S)$. When $S$ is qcqs, these are precisely the presheaves that satisfy Nisnevich descent [\cite{hoyois2015quadratic}, Appendix C2]. Hence, when $S$ is a qcqs scheme, the subcategory $L_{nis}\mathcal{P}(S)\subset \mathcal{P}(S)$ is a left-exact, accessible localization with localization functor $L_{nis}$.

Here is the main definition of this paper:
  \begin{defn}
  A space $\esc{X}\in \mathcal{P}(S)$ is a projective motivic space if it is both $\mathbb{P}^1$-local and Nisnevich-local. 
  \end{defn}
  
  The full subcategory of projective motivic spaces is denoted $\mathcal{H}^{\mathbb{P}^1}(S)\subset \mathcal{P}(S)$. Since (representable) schemes satisfy Nisnevich descent, $\mathbb{P}(S)$ is contained in $\mathcal{P}_{nis}(S)$. Thus, $\mathcal{H}^{\mathbb{P}^1}(S)$ is the subcategory of the topos $\mathcal{P}_{nis}(S)$ consisting of $\mathbb{P}(S)$-local objects. By [\cite{lurie2009higher}, Proposition 5.5.4.15], this is a presentable category and an accessible localization of the Nisnevich topos. We denote the corresponding localization functor as $L_{pmot}$. Morphisms in the strongly saturated class generated by $\mathbb{P}(S)$ are called projective motivic equivalences.  
\begin{rem}
    The Topological analog of this construction is as follows. Consider the cohesive $\infty$-topos $$\mathbf{H}:=\mathcal{P}_{open}(SmMan)$$ of sheaves of spaces on the category of real smooth manifolds with the open cover topology. Recall that the $\mathbb{R}^1$-localization of this category recovers the $\infty$-topos of spaces [\cite{dugger1999sheaves}, Remark 3.4.10], i.e., there is a canonical equivalence:
    $$Spc\simeq L_{\mathbb{R}^1}\mathcal{P}_{open}(SmMan).$$
    Since $S^1_\mathbb{R}\simeq \mathbb{P}^1_\mathbb{R}$, the analog of the topological $\mathbb{P}^1$-local category is precisely the localization $$L_{S^1}\mathcal{P}_{open}(SmMan).$$ To the best of my knowledge, this category has never been studied, neither by differential cohomology theorists nor by physicists.
\end{rem}
  
\underline{\textbf{On constructing a model for $L_{pmot}$.}}

Despite its complexity, $\mathbb{A}^1$-homotopy theory has a savior that enables several computations and deductions that would otherwise be impossible from the definition alone. This is because $\mathbb{A}^1$ is an interval object \textup{[\cite{morel19991}, Definition 2.3.1]}, and hence the analog of the classical singular complex formula provides an explicit model for $\mathbb{A}^1$-localization [\cite{morel19991}, Lemma 2.3.20].

\begin{obs}\label[obs]{p1 not interval}
Unfortunately, $\mathbb{P}^1_S$ is not an interval in the sense of Morel-Voevodsky \textup{[\cite{morel19991}, Definition 2.3.1]}.
\end{obs}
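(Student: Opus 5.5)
The plan is to argue by contradiction, using the fact that degrees of line bundles on $\mathbb{P}^1$ are locally constant in families. Recall that an interval structure on $I=\mathbb{P}^1_S$ in the sense of [\cite{morel19991}, Definition 2.3.1] consists of two sections $i_0,i_1\colon S\to \mathbb{P}^1_S$ and a multiplication $\mu\colon \mathbb{P}^1_S\times_S\mathbb{P}^1_S\to \mathbb{P}^1_S$ satisfying:
\begin{itemize}
\item $\mu\circ(i_0\times \mathrm{id})=\mu\circ(\mathrm{id}\times i_0)=i_0\circ p$;
\item $\mu\circ(i_1\times \mathrm{id})=\mu\circ(\mathrm{id}\times i_1)=\mathrm{id}$;
\item $i_0\sqcup i_1$ is a monomorphism.
\end{itemize}
Assuming such data exist for some nonempty $S$, I will show that the two identities for a single slot force incompatible fibre degrees.

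The first step is to reduce to a field. Pick a point $s\in S$ with residue field $\kappa$ and base change all the data along $\operatorname{Spec}\kappa\to S$. The identities above are preserved, and the sections become $\kappa$-points $a_0,a_1\in\mathbb{P}^1_\kappa$. So it suffices to treat $S=\operatorname{Spec}\kappa$, where $\mu\colon \mathbb{P}^1_\kappa\times\mathbb{P}^1_\kappa\to\mathbb{P}^1_\kappa$.

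Next, consider $\mathcal{L}:=\mu^*\mathcal{O}(1)$ on $\mathbb{P}^1\times\mathbb{P}^1$. For a $\kappa$-point $y$ of the second factor, write $j_y\colon \mathbb{P}^1\to \mathbb{P}^1\times\mathbb{P}^1$, $x\mapsto (x,y)$. Then $j_y^*\mathcal{L}=(\mu\circ j_y)^*\mathcal{O}(1)$. The function $y\mapsto \deg j_y^*\mathcal{L}$ is locally constant, since $\mathcal{L}$ is flat over the second factor and fibrewise Euler characteristic is locally constant. It is therefore constant, because $\mathbb{P}^1_\kappa$ is connected. Concretely, $\operatorname{Pic}(\mathbb{P}^1\times\mathbb{P}^1)=\mathbb{Z}^2$, so $\mathcal{L}\cong\mathcal{O}(a,b)$ and $\deg j_y^*\mathcal{L}=a$ for every $y$.

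Now evaluate at the two sections. At $y=a_1$, the unit axiom gives $\mu\circ j_{a_1}=\mathrm{id}$, so $a=\deg\mathcal{O}(1)=1$. At $y=a_0$, the absorption axiom makes $\mu\circ j_{a_0}$ the constant map at $a_0$. The pullback of $\mathcal{O}(1)$ along a constant map is trivial, so $a=0$. This contradiction shows that no multiplication $\mu$ exists, whatever sections $i_0,i_1$ are chosen, and hence $\mathbb{P}^1_S$ is not an interval. The monomorphism condition is never used.

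The only point needing care is the constancy of the fibre degree, and the Künneth description of $\operatorname{Pic}(\mathbb{P}^1\times\mathbb{P}^1)$ settles it directly. One could equally argue without Picard groups. Over a non-closed residue field, or to avoid choosing $\kappa$-points, one can pass to $\bar\kappa$ first.
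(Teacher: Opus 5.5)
Your argument is correct, but it reaches the contradiction by a different route from the paper.

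The paper works over a field and invokes Mumford's rigidity lemma. Since $\mathbb{P}^1$ is complete and $\mu(\mathrm{id}\times i_0)$ is constant, $\mu$ factors through the second projection, $\mu(x,y)=s(y)$. This is incompatible with $\mu(\mathrm{id}\times i_1)=\mathrm{id}$.

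You replace rigidity by a discrete invariant. Writing $\mu^*\mathcal{O}(1)\cong\mathcal{O}(a,b)$ forces the degree on the horizontal fibres $\mathbb{P}^1\times\{y\}$ to be the same at $y=a_0$ and $y=a_1$. The absorption and unit axioms make that degree $0$ and $1$ respectively.

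What each approach buys:
\begin{itemize}
\item The paper's route gives more structure: one absorbing point in a slot already pins $\mu$ down completely. It also applies unchanged to any positive-dimensional complete variety in place of $\mathbb{P}^1$.
\item Your route is more elementary and self-contained for $\mathbb{P}^1$. It runs over an arbitrary field using only the rational points supplied by $i_0,i_1$, whereas the rigidity lemma is classically stated over an algebraically closed field, a point the paper passes over.
\item Your base-change reduction from an arbitrary nonempty $S$ to a residue field makes explicit the step the paper leaves implicit when it moves from fields to ``in general''.
\end{itemize}

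Both arguments use only the two axioms involving the second slot, and neither needs the monomorphism condition.
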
 
\begin{proof}
   Let us consider this over fields. If $\mathbb{P}^1_k$ were an interval object, there would be maps $\mu: \mathbb{P}^1_k\times_k \mathbb{P}^1_k\to \mathbb{P}^1_k$ and two sections $i_0,i_1\in \mathbb{P}^1_k$ such that 
   \begin{flalign}
       \mu (i_0\times id)=\mu(id\times i_0)=i_0\\
   \mu(i_1\times id )=\mu (id \times i_1)=id
   \end{flalign}
   [see \cite{morel19991}, Definition 2.3.1]. Since $\mathbb{P}^1_k$ is complete, by the rigidity lemma [\cite{mumford1974abelian}, Page 40 in the second edition (43 in the old one)], the condition $\mu(id\times i_0)=i_0$ implies that $\mu$ must factor through the second projection $p_2: \mathbb{P}^1_k\times_k \mathbb{P}^1_k\to \mathbb{P}^1_k$ via a rational function $s:\mathbb{P}^1_k\to \mathbb{P}^1_k$. That is, $\mu(x,y)=s(y)$. But then the condition $\mu (id \times i_1)=id$ will fail. Therefore, in general $\mathbb{P}^1_S$ cannot be promoted to an interval on the site $Sm_S^{nis}$. 
\end{proof}

Thus, the singular construction can no longer be used to produce models of $\mathbb{P}^1$-localization. To be precise, the failure of the interval property prevents the singular construction from being a simplicial colimit. The multiplication map $\mu$ provides the degeneracy in the complex. Otherwise, the complex is just a semisimplicial one. In detail, let 
$$\Delta_{\mathbb{P}}^n:=\underset{n-times }{\mathbb{P}^1\times\cdots \times \mathbb{P}^1}.$$ Choose two sections $0, \infty : S\to \mathbb{P}^1_S$. Inserting either $0$ or $\infty$ in one spot, we may turn $\Delta_{\mathbb{P}_S}^n$ into a co-semisimplicial complex $\Delta^\bullet _{\mathbb{P}_S}$ of spaces. We may try $\colim_{}\esc{X}^{\Delta_{\mathbb{P}_S}^\bullet}$ as a prototype for a $\mathbb{P}^1$-local model for $\esc{X}$. Unfortunately, even though $$\esc{X}\to \colim \esc{X}^{\Delta_{\mathbb{P}_S}^\bullet}$$ is a $\mathbb{P}^1$-local equivalence (being a colimit of $\mathbb{P}^1$-local equivalences  $\esc{X}\to \esc{X}^{\Delta_{\mathbb{P}_S}^n}$), the space  $\colim_{}\esc{X}^{\Delta_{\mathbb{P}_S}^\bullet}$ fails to be a $\mathbb{P}^1$-local object. 

However, the good news is that the absence of degeneracy maps can be addressed by including all possible maps in place of the face maps and adding all projection maps in place of degeneracy maps. So let $\delta^\bullet_{\mathbb{P}_S^1}\times Y$ be the sub-diagram of the slice category $({Sm_S})_{/Y}$ given by $$\delta^n_{\mathbb{P}_S^1}\times Y:=(\mathbb{P}^1_S)^{\times n}\times _S Y$$ with structure maps given by the projections to $Y$. 
\begin{thm}\label{model for P1 localization}
\begin{enumerate}
    \item The $\mathbb{P}^1$ localization functor on $\mathcal{P}(S)$ is given by the following colimit formula: $$X\mapsto  L_{\mathbb{P}_S^1}\esc{X}(Y) =\colim_{(\delta^n_{\mathbb{P}_S^1}\times_S Y)^{op}} \esc{X}(\delta^n_{\mathbb{P}_S^1}\times_S Y)$$
    \item    $L_{\mathbb{P}^1}$ is a locally cartesian localization functor.
    \item $L_{\mathbb{P}^1}$ is cartesian.
\end{enumerate}
\end{thm}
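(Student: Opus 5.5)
The plan is to follow the general recipe of [\cite{hoyois2017six}, Proposition 3.4], specialised to the single object $\mathbb{P}^1_S$. Write $\mathcal{D}_Y\subset (Sm_S)_{/Y}$ for the full subcategory on the objects $\delta^n_{\mathbb{P}^1_S}\times_S Y$, $n\geq 0$, and set
$$L'\esc{X}(Y):=\colim_{Z\in \mathcal{D}_Y^{op}}\esc{X}(Z).$$

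The first structural observation concerns products. $\mathcal{D}_Y$ is closed under fibre products over $Y$, since $(\delta^n\times_S Y)\times_Y(\delta^m\times_S Y)\simeq \delta^{n+m}\times_S Y$, and it contains the terminal object $Y$. Because it is a \emph{full} subcategory of the slice, these products are honest products in $\mathcal{D}_Y$. Hence $\mathcal{D}_Y^{op}$ has finite coproducts, and is therefore sifted. I will use this fact repeatedly: any full subcategory of a slice over $\mathcal{D}_Y$ that is closed under these products and is nonempty is weakly contractible.

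The proof of (1) then splits into two steps.

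\textbf{Step 1: $L'\esc{X}$ is $\mathbb{P}^1$-local.} Consider the functors
$$q:\mathcal{D}_Y\to \mathcal{D}_{\mathbb{P}^1\times Y},\quad Z\mapsto Z\times\mathbb{P}^1, \qquad s:\mathcal{D}_Y\to\mathcal{D}_Y,\quad Z\mapsto Z\times \mathbb{P}^1.$$
\begin{enumerate}
\item I show that $q^{op}$ is cofinal. For $W=\delta^n\times Y\times\mathbb{P}^1$, the comma category of maps $W\to q(Z)$ over $\mathbb{P}^1\times Y$ is nonempty (take $Z=\delta^n\times Y$ with the identity) and is closed under the products above. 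By the observation on sifted categories it is weakly contractible.
\item Cofinality of $q^{op}$ identifies $L'\esc{X}(\mathbb{P}^1\times Y)$ with $\colim_{Z}\esc{X}(Z\times\mathbb{P}^1)$.
\item The same argument shows that $s^{op}$ is cofinal, so this last colimit is again $L'\esc{X}(Y)$.
\item I then check that the composite of these identifications agrees with the restriction map $L'\esc{X}(Y)\to L'\esc{X}(\mathbb{P}^1\times Y)$.
\end{enumerate}
This bookkeeping in item 4 is the step I expect to be the main obstacle. Once the identifications are matched with restriction, the restriction map is an equivalence and $L'\esc{X}$ is $\mathbb{P}^1$-local.

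\textbf{Step 2: $\esc{X}\to L'\esc{X}$ is a $\mathbb{P}^1$-equivalence.} $L'\esc{X}$ is a colimit, over the diagram, of the presheaves $\esc{X}(\delta^n\times -)=\underline{\mathrm{Hom}}(\delta^n,\esc{X})$. Each map $\esc{X}\to \underline{\mathrm{Hom}}(\delta^n,\esc{X})$ is a $\mathbb{P}^1$-homotopy equivalence: mapping into a $\mathbb{P}^1$-local $\esc{G}$ gives equivalences, because $\underline{\mathrm{Hom}}(\delta^n\times -,\esc{G})\simeq\esc{G}$. Strongly saturated classes are closed under colimits, so the colimit map $\esc{X}\to L'\esc{X}$ is a $\mathbb{P}^1$-equivalence. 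Together with Step 1 this gives $L'\simeq L_{\mathbb{P}^1}$, which is (1).

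\textbf{Steps 3 and 4: the exactness properties (2) and (3).} Both follow from the formula in (1).
\begin{enumerate}
\item For (3), evaluation $\esc{X}\mapsto\esc{X}(Z)$ preserves finite products, and sifted colimits of spaces commute with finite products. Hence $L_{\mathbb{P}^1}$ preserves finite products, i.e.\ it is cartesian.
\item For (2), let $\esc{F}\to\esc{G}\leftarrow\esc{H}$ be a cospan with $\esc{G}$ $\mathbb{P}^1$-local. Then the diagram $Z\mapsto \esc{G}(Z)$ on $\mathcal{D}_Y^{op}$ is equivalent to the constant diagram at $\esc{G}(Y)$, since every map in $\mathcal{D}_Y$ is a composite of projections and sections and $\esc{G}$ inverts projections.
\item Colimits in spaces are universal, so pullback along a map to a constant base commutes with colimits. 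Therefore $L'(\esc{F}\times_{\esc{G}}\esc{H})\simeq L'\esc{F}\times_{\esc{G}}L'\esc{H}$, and $L_{\mathbb{P}^1}$ is locally cartesian.
\end{enumerate}
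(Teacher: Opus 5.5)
Your proposal has genuine gaps in both halves of (1). The paper itself just cites [\cite{hoyois2017six}, Proposition 3.4], and your re-derivation of that result breaks at its two key points.

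\textbf{Step 1.} You checked the wrong comma category. For $q^{op}\colon\mathcal{D}_Y^{op}\to\mathcal{D}_{\mathbb{P}^1\times Y}^{op}$ to be cofinal for colimits, you need, for each $W$, the category of pairs $(Z,\ q(Z)\to W)$ (maps over $\mathbb{P}^1\times Y$) to be weakly contractible.

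The category of pairs $(Z,\ W\to q(Z))$ that you use always has a terminal object, namely $(Y,\ W\to\mathbb{P}^1\times Y)$. It only shows that $q^{op}$ is initial, i.e.\ it computes limits. That is as it must be: $q=f^*$ is right adjoint to $f_\sharp$ (composition with $f\colon\mathbb{P}^1\times Y\to Y$), so $q^{op}$ is a left adjoint.

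The relevant category is not closed under your products, and it is not weakly contractible. Take $S=Y=\operatorname{Spec}k$ and $W=\mathbb{P}^1\times\mathbb{P}^1\xrightarrow{pr_2}\mathbb{P}^1$. The objects are maps $\psi\colon(\mathbb{P}^1)^m\times\mathbb{P}^1\to\mathbb{P}^1$ and the morphisms are of the form $g\times\mathrm{id}$, so the degree of $\psi$ in the last variable is constant on components.

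Concretely, let $\esc{X}=h_{\mathbb{P}^1}$. Then $\pi_0L'\esc{X}(\mathbb{P}^1)=*$: the diagonal and the section $(0,\mathrm{id})$ of $pr_2$ pull $pr_1$ back to $\mathrm{id}_{\mathbb{P}^1}$ and to a constant map. By contrast, $\pi_0\colim_Z\esc{X}(Z\times\mathbb{P}^1)$ surjects onto $\mathbb{N}$ via degree. So item 2 is false, and the same example rules out $s^{op}$ in item 3; the failure comes before the bookkeeping you worried about in item 4.

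What does work is the other adjoint. $f_\sharp^{op}$ is a right adjoint, hence cofinal, so $L'\esc{X}(\mathbb{P}^1\times Y)=\colim_W\esc{X}(f_\sharp W)\simeq L'\esc{X}(Y)$. The counit $f_\sharp f^*Z=Z\times\mathbb{P}^1\to Z$ then shows that restriction is a section of this equivalence, hence itself an equivalence.

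\textbf{Step 2.} There are two problems here.

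First, $L'\esc{X}$ is not a colimit of the presheaves $\underline{\mathrm{Hom}}(\delta^n,\esc{X})$ over one fixed diagram. Morphisms in $\mathcal{D}_Y$ are arbitrary maps $\delta^n\times Y\to\delta^m$. Restricting to the $Y$-independent ones, via base change $\mathcal{D}_S^{op}\to\mathcal{D}_Y^{op}$, is not cofinal, by the same degree obstruction with $Y=\mathbb{P}^1$.

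Second, and decisively, $\esc{X}\to\underline{\mathrm{Hom}}(\mathbb{P}^1,\esc{X})$ is \emph{not} a $\mathbb{P}^1$-equivalence in general. Take $\esc{X}=h_{\mathbb{P}^1}$ over $k$, and let $\esc{G}$ be the sheaf of locally constant $\mathbb{Z}$-valued functions, which is discrete and $\mathbb{P}^1$-invariant. The fibrewise degree of $\psi\colon\mathbb{P}^1\times U\to\mathbb{P}^1$ and the zero map are two different maps $\underline{\mathrm{Hom}}(\mathbb{P}^1,\esc{X})\to\esc{G}$ that agree on $\esc{X}$.

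Your justification only shows $\esc{G}\simeq\underline{\mathrm{Hom}}(\delta^n,\esc{G})$, which says nothing about maps out of $\underline{\mathrm{Hom}}(\delta^n,\esc{X})$. In the $\mathbb{A}^1$ case the missing homotopy is built from $\mu$, which is exactly what \Cref{p1 not interval} rules out. (The paper's motivating remark before the theorem makes the same claim, but its proof does not depend on it.)

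A correct argument has to use the $Y$-dependence of $\mathcal{D}_Y$. For instance, let $\mathcal{E}$ be the category of arrows $\delta^n\times_SY\to Y$, with source and target functors $s,t$. Then:
\begin{enumerate}
\item $L'=t_!s^*$, since $t$ is a cartesian fibration with fibres $\mathcal{D}_Y$;
\item $s^*$ is fully faithful, because $s$ has the fully faithful left adjoint $Y\mapsto\mathrm{id}_Y$;
\item $t^*\esc{G}\simeq s^*\esc{G}$ for $\mathbb{P}^1$-local $\esc{G}$.
\end{enumerate}
Together these give $\mathrm{Map}(L'\esc{X},\esc{G})\simeq\mathrm{Map}(s^*\esc{X},s^*\esc{G})\simeq\mathrm{Map}(\esc{X},\esc{G})$, and one checks this is restriction along the unit.

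Steps 3 and 4 are fine once (1) holds. One small point in Step 4: to pull the colimit out of both factors of $\esc{F}(Z)\times_{\esc{G}(Y)}\esc{H}(Z)$ you need siftedness of $\mathcal{D}_Y^{op}$, not only universality of colimits.
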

\begin{proof}
 This is a special case of [\cite{hoyois2017six}, Proposition 3.4] applied to $\mathcal{C}=\mathcal{P}(S)$ and $$\mathcal{A}=\{\mathbb{P}^1_S\times _SX\to X\}_{X\in Sm_S}.$$ By definition, $\mathcal{A}$ is closed under pullbacks. It is also evident that in Hoyois's notation $$\mathcal{A}_Y\equiv \delta_{\mathbb{P}_S}^n\times_S Y.$$
\end{proof}
\begin{prop}\label[prop]{model for pmot localization}
        The Nisnevich-local model of $\mathbb{P}^1$-localization can be expressed as $$L_{pmot}\simeq \colim _{n\to \infty }(L_{nis}\circ  L_{\mathbb{P}^1})^n.$$
\end{prop}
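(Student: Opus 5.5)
The plan is to argue as in the standard construction of $L_{mot}$ as a transfinite composite of $L_{nis}$ and $L_{\mathbb{A}^1}$ (compare [\cite{hoyois2017six}], where the analogous formula is proved for $\mathbb{A}^1$), using only formal properties of the two localizations. Write $F:=L_{nis}\circ L_{\mathbb{P}^1}$ and $L:=\colim_n F^n$, where the colimit is taken in $\mathcal{P}(S)$ along the natural maps $F^n\to F^{n+1}$ induced by the unit $\mathrm{id}\to F$. Each map $\esc{X}\to L_{\mathbb{P}^1}\esc{X}$ is a $\mathbb{P}^1$-equivalence and each $\esc{Y}\to L_{nis}\esc{Y}$ is a Nisnevich equivalence. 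Both kinds of map lie in the strongly saturated class generated by $\mathbb{P}(S)$ together with the Nisnevich covering sieves, i.e. they are projective motivic equivalences. Strongly saturated classes are closed under composition and colimits, so $\esc{X}\to L\esc{X}$ is a projective motivic equivalence.

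It then remains to show that $L\esc{X}$ is both Nisnevich-local and $\mathbb{P}^1$-local. I would first reduce to a filtered colimit over $\mathbb{N}$. The key input is that both local classes are closed under filtered colimits in $\mathcal{P}(S)$.

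\emph{$\mathbb{P}^1$-locality.} The condition is that $\esc{X}(U)\to \esc{X}(\mathbb{P}^1_U)$ is an equivalence for each $U$. Colimits in $\mathcal{P}(S)$ are computed objectwise, so this condition is stable under all colimits and in particular sequential ones.

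\emph{Nisnevich-locality.} Here I use that $S$ is qcqs, so Nisnevich descent is equivalent to excision for Nisnevich squares plus $\esc{X}(\emptyset)\simeq *$. Filtered colimits of $\infty$-groupoids commute with finite limits, namely pullbacks and the terminal object. Hence a filtered colimit of Nisnevich-local presheaves is again Nisnevich-local.

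Next, in the sequence $\esc{X}\to F\esc{X}\to F^2\esc{X}\to\cdots$, the terms $F^{n}\esc{X}$ are Nisnevich-local for all $n\ge 1$. The terms $L_{\mathbb{P}^1}F^{n}\esc{X}$ are $\mathbb{P}^1$-local, and the sequence interleaves them:
$$F^n\esc{X}\to L_{\mathbb{P}^1}F^n\esc{X}\to F^{n+1}\esc{X}\to L_{\mathbb{P}^1}F^{n+1}\esc{X}\to\cdots.$$
By cofinality, $L\esc{X}$ is both the colimit of the Nisnevich-local subsequence and the colimit of the $\mathbb{P}^1$-local subsequence. By the two closure properties above, it is Nisnevich-local and $\mathbb{P}^1$-local, i.e. it lies in $\mathcal{H}^{\mathbb{P}^1}(S)$. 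A projective motivic equivalence into a projective motivic space is the localization map, so $L\simeq L_{pmot}$.

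The main obstacle is the cofinality step: I must check that the interleaved sequence is genuinely a single $\mathbb{N}$-indexed diagram whose two subsequences are cofinal. This is immediate once the maps are written as units. I also need that the Nisnevich-local objects coincide with excisive presheaves, which requires the qcqs hypothesis on $S$ via [\cite{hoyois2015quadratic}, Appendix C2]. No use of \Cref{model for P1 localization} beyond the existence of $L_{\mathbb{P}^1}$ is needed.
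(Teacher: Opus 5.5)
Your proposal is correct and follows essentially the same route as the paper's proof. Both arguments use three ingredients: closure of the $\mathbb{P}^1$-local and Nisnevich-local subcategories of $\mathcal{P}(S)$ under filtered colimits, cofinality of the Nisnevich-local and $\mathbb{P}^1$-local subsequences of the interleaved tower, and closure of the strongly saturated class under composition and colimits.

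Your version is slightly more careful than the paper's in two places. You note that only the terms $F^n\esc{X}$ with $n\geq 1$ are Nisnevich-local, whereas the paper's even-indexed subdiagram begins with $\esc{X}$. You also spell out why excision for Nisnevich squares survives filtered colimits.
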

\begin{proof}
       Since $\mathbb{P}^1$-localization is defined by a schematic set of morphisms, we observe that $L_{\mathbb{P}^1}\mathcal{P}(S)\subset \mathcal{P}(S)$ is closed under filtered colimits (in fact, all colimits). Moreover, $\mathcal{P}_{nis}(S)\subset\mathcal{P}(S)$ is also closed under filtered colimits, as the Nisnevich topology is a cd topology. For a space, the colimit in the statement is computed by taking the cofinal subdiagram consisting of the even-numbered terms of the long diagram:    $$\esc{X}\to L_{\mathbb{P}^1}\esc{X}\to  L_{nis}L_{\mathbb{P}^1}\esc{X}\to  L_{\mathbb{P}^1}L_{nis}L_{\mathbb{P}^1}\esc{X}\to  L_{nis}L_{\mathbb{P}^1}L_{nis}L_{\mathbb{P}^1}\esc{X}\to \cdots.$$
    Since all the terms in the cofinal $2\mathbb{N}$ sub-diagram are nisnevich local, the colimit is also nisnevich local. The subdiagram consisting of the odd terms, on the other hand, is also cofinal and consists of $\mathbb{P}^1$-local spaces, implying that the colimit is $\mathbb{P}^1$-local. The colimit $ \colim _{n\to \infty }(L_{nis}\circ  L_{A}\circ L_{B})^n\esc{X}$ is thus an $\mathbb{P}^1$-local nisnevich space. Since each morphism in the entire diagram, as drawn above, is either an $\mathbb{P}^1$-equivalence or a nisnevich local equivalence (note that each of these is a projective motivic equivalence), we find that the colimiting morphism $ \esc{X}\to \colim _{n\to \infty }(L_{nis}\circ  L_{\mathbb{P}^1})^n\esc{X}$ is also a projective motivic equivalence, by closure of the saturation class under colimits. 
\end{proof}
Recall that the affine motivic localization functor has an identical presentation $$L_{mot}\simeq \colim_n{(L_{nis}L_{\mathbb{A}^1})^n}.$$ However, since $\mathbb{A}^1$ is an interval object, one has $$L_{\mathbb{A}^1}\esc{X}\simeq |\esc{X}^{\mathbb{A}^1}|.$$ Because $\pi_0$ preserves geometric realizations (in fact, all colimits), and geometric realizations in $1$-categories are computed as the coequalizer of the face maps $d_0, d_1$, it follows that there is a canonical surjection $$\pi_0\esc{X}\to \pi_0L_{\mathbb{A}^1}\esc{X},$$ which induces a surjection $$\pi_0^{nis}\esc{X}\to \pi_0^{nis}L_{mot}\esc{X}.$$ Since $\mathbb{P}^1$ is not an interval object, this mechanism does not apply to our case of $\mathbb{P}^1$ motivic spaces. It turns out that there is a better way to produce this surjection that works more generally. The key is the following lemma. 
\begin{lem}\label[lem]{weak terminal lemma}
Suppose $\mathscr{D}$ is a small indexing (1-)category with a weakly terminal object $1$. Let $F:\mathscr{D}\to \mathscr{C}$ be a diagram of $1$-categories. Then $F(1)\to \colim_{d\in D}Fd$ is an epimorphism.
\end{lem}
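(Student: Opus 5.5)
Let $\iota_d\colon F(d)\to \colim_{d\in \mathscr{D}}F$ denote the colimit injections. The plan is to check the epimorphism property directly against the universal property of the colimit. Take two parallel maps $f,g\colon \colim_{d}Fd\rightrightarrows Z$ in $\mathscr{C}$ with $f\circ\iota_1=g\circ\iota_1$. By the universal property, two maps out of the colimit agree as soon as they agree after precomposition with every $\iota_d$. So it suffices to show $f\circ\iota_d=g\circ\iota_d$ for all objects $d\in\mathscr{D}$.

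To get this, I will use weak terminality of $1$: for each $d$ there is at least one morphism $u_d\colon d\to 1$ in $\mathscr{D}$. The cocone identities give $\iota_d=\iota_1\circ F(u_d)$. Then
\[
f\circ\iota_d=f\circ\iota_1\circ F(u_d)=g\circ\iota_1\circ F(u_d)=g\circ\iota_d .
\]
Hence $f=g$, which proves that $\iota_1$ is an epimorphism. The choice of $u_d$ does not matter, since any morphism $d\to 1$ gives the same identity $\iota_d=\iota_1\circ F(u_d)$. This is why only weak terminality (existence, not uniqueness) is needed.

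I do not expect a real obstacle here. The only thing to watch is that the statement is $1$-categorical, so epimorphism means left cancellability in the ordinary sense and the colimit is an honest $1$-categorical colimit. In the intended application ($\mathscr{C}$ equal to sets or sheaves of sets, after applying $\pi_0$, which preserves colimits), epimorphisms are exactly surjections. There one can also see the result concretely: every element of $\colim Fd$ is represented by some $x\in F(d)$, and it equals the class of $F(u_d)(x)\in F(1)$. I would remark that this concrete surjectivity is what will be used later. Applied to the diagram of \Cref{model for P1 localization}, whose indexing category $(\delta^\bullet_{\mathbb{P}^1_S}\times_S Y)^{op}$ has $Y$ itself (the case $n=0$) as weakly terminal object via a rational section, it yields the surjection $\pi_0\esc{X}\to\pi_0L_{\mathbb{P}^1}\esc{X}$.
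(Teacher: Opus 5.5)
Your proof is correct and uses the same key observation as the paper's proof: weak terminality lets every colimit leg factor as $\iota_d=\iota_1\circ F(u_d)$. The paper routes this through the canonical epimorphism $\bigsqcup_d Fd\to\colim_d Fd$ and right cancellation of epimorphisms, whereas you use directly that the legs are jointly epimorphic, which has the minor advantage of needing only that the colimit exists, not coproducts in $\mathscr{C}$.
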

\begin{proof}
    There is certainly an epimorphism $$\underset{d\in D}{\bigsqcup}Fd\to \colim_{d\in D}Fd$$ (recall that there is a parallel arrow diagram with codomain $\underset{d\in D}{\bigsqcup}Fd$, whose coequalizer computes the whole colimit $\colim_{d\in D}Fd$ at once [\cite[\href{https://stacks.math.columbia.edu/tag/002P}{Lemma 002P}]{stacks-project}], and that a coequalizer is an epimorphism). For every $d$, by weak terminality of $1$, choose a morphism $j_d: d\to 1$. By the cocone condition for colimits, the composite $$Fd\xrightarrow[]{Fj_d} F1\to \colim_{d\in D}Fd$$ must equal the morphism $Fd\to \colim_{d\in D}Fd$. So the above epimorphism must factor through $F(1)$ via the map $$\underset{d\in D}{\bigsqcup}F(j_d):\underset{d\in D}{\bigsqcup}Fd\to F1,$$ yielding the epimorphism $F(1)\to \colim_{d\in D}Fd$ (by the right cancellation property of epimorphisms in a $1$-category).
\end{proof}

\begin{lem}\label[lem]{presheaf pi0 p1 connectivity}
    $L_{\mathbb{P}^1}$ induces a surjection on path-connected components. That is, for any space $\esc{X}$, the canonical map $$\pi_0\esc{X}\to \pi_0L_{\mathbb{P}^1}\esc{X}$$ is a surjective morphism of presheaves of sets on $Sm_S$.
\end{lem}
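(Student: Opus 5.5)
The plan is to combine the colimit formula of \Cref{model for P1 localization}(1) with \Cref{weak terminal lemma}. Fix $\esc{X}\in\mathcal{P}(S)$ and $Y\in Sm_S$. By \Cref{model for P1 localization}(1),
$$L_{\mathbb{P}^1}\esc{X}(Y)\simeq \colim_{(\delta^\bullet_{\mathbb{P}^1_S}\times_S Y)^{op}}\esc{X}(\delta^n_{\mathbb{P}^1_S}\times_S Y),$$
a colimit in $\infty$-groupoids. The functor $\pi_0:\mathcal{S}\to Set$ is a left adjoint to the inclusion of sets as discrete spaces, so it preserves all colimits. Hence
$$\pi_0 L_{\mathbb{P}^1}\esc{X}(Y)\cong \colim_{(\delta^\bullet_{\mathbb{P}^1_S}\times_S Y)^{op}}\pi_0\esc{X}(\delta^n_{\mathbb{P}^1_S}\times_S Y),$$
now a colimit of sets. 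Since $Set$ is a $1$-category, this colimit depends only on the homotopy category of the indexing diagram. So we may replace the indexing $\infty$-category by its homotopy $1$-category $\mathscr{D}$, whose objects are the $Y$-schemes $(\mathbb{P}^1_S)^{\times n}\times_S Y$ and whose morphisms are $Y$-morphisms between them.

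The next step is to exhibit a weakly terminal object of $\mathscr{D}$. Since the index category is the opposite of a subcategory of $(Sm_S)_{/Y}$, this amounts to a weakly \emph{initial} object in the slice diagram. The candidate is $n=0$, i.e.\ $Y$ itself with the identity structure map. For any object $(\mathbb{P}^1_S)^{\times n}\times_S Y\to Y$, the section $0:S\to\mathbb{P}^1_S$ (any rational point works) gives a $Y$-morphism $Y\to(\mathbb{P}^1_S)^{\times n}\times_S Y$ by inserting $0$ in each coordinate. This morphism lies in the diagram, because the diagram $\mathcal{A}_Y$ of [\cite{hoyois2017six}, Proposition 3.4] includes all morphisms over $Y$ between its objects, not only the face and degeneracy maps. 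Passing to opposites, $Y$ is weakly terminal in $\mathscr{D}$.

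Applying \Cref{weak terminal lemma} with $\mathscr{C}=Set$ and $F=\pi_0\esc{X}(-)$, the map $\pi_0\esc{X}(Y)=F(Y)\to\colim F\cong\pi_0L_{\mathbb{P}^1}\esc{X}(Y)$ is an epimorphism of sets, i.e.\ a surjection. It remains to check that this map agrees with the one induced by the unit $\esc{X}\to L_{\mathbb{P}^1}\esc{X}$. Under the formula of \Cref{model for P1 localization}, the unit is the colimit inclusion at the object $n=0$. Since this holds naturally in $Y$, we obtain an objectwise surjection of presheaves of sets.

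The main obstacle is this last bookkeeping step: confirming from Hoyois's construction that (a) the object $Y$ with $n=0$ really belongs to the indexing diagram, (b) the sections $Y\to\mathbb{P}^n_Y$ are admissible morphisms of the diagram, and (c) the unit map is the coprojection at $Y$. I would verify these by unwinding [\cite{hoyois2017six}, Proposition 3.4], where $\mathcal{A}_Y$ is the full subcategory of $(Sm_S)_{/Y}$ spanned by finite composites of maps in $\mathcal{A}$, including the empty composite.
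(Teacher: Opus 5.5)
Your proposal is correct and follows the paper's proof: you use the colimit formula, commute $\pi_0$ past the colimit, use a section (the paper takes $\infty$ where you take $0$) to make $\mathrm{id}_Y$ weakly terminal in $\mathcal{A}_Y^{op}$, and then apply the weak terminal lemma. Your explicit check that the resulting surjection is the unit, i.e.\ the coprojection at $n=0$, is a point the paper leaves implicit.
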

\begin{proof}
      Let $\infty: S\to \mathbb{P}^1_S$ be the section at infinity. The section $\infty ^n:S\to \delta^n_{\mathbb{P}_S}$ given by $(\infty,\cdots, \infty)$ induces a section $$j_{\delta^n_\mathbb{P}\times Y}:= \infty ^n\times id_Y: Y\to \delta^n_{\mathbb{P}_S}\times_S Y$$ of the projection $ \delta^n_{\mathbb{P}_S}\times_S Y\to Y$. Thus, $id_Y:Y\to Y$ is a weakly terminal object of the diagram $$\mathcal{A}_Y\equiv (\delta^n_{\mathbb{P}_S}\times_S Y)^{op}.$$ By the lemma above, this yields an epimorphism $\pi_0\esc{X}(Y)\to\colim_{m} \pi_0\esc{X}({{\delta_{\mathbb{P}_S}^m}\times Y})$.

Because $\pi_0$ preserves colimits, we have $$\pi_0L_{\mathbb{P}^1}\esc{X}(Y)\cong\colim_{m} \pi_0\esc{X}({{\delta_{\mathbb{P}_S}^m}\times Y}).$$ By the observation above, this implies that for every $Y\in Sm_S$ there is a canonical surjection $$\pi_0\esc{X}(Y)\to \pi_0L_{\mathbb{P}^1}\esc{X}(Y).$$ By functoriality, the canonical map $\pi_0\esc{X}\to \pi_0L_{\mathbb{P}} \esc{X}$ is surjective. Since sheafification preserves surjections, we obtain a surjection $$\pi_0^{nis}\esc{X}\to \pi_0^{nis}L_{\mathbb{P}^1} \esc{X}.$$
\end{proof}
\begin{rem}
It is clear from the proof that it applies equally to localization at any smooth scheme $A/S$ admitting a rational section $S\to A$.
\end{rem}
\begin{prop}[Unstable $\mathbb{P}^1$-connectivity theorem]\label[prop]{P1 connectivity theorem}
For any space $\esc{X}$, the canonical map $$\pi_0^{nis}\esc{X}\to \pi_0^{nis}L_{pmot}\esc{X}=: \pi_0^{\mathbb{P}^1}\esc{X}$$ is a surjective morphism of nisnevich sheaves of sets on $Sm_S$. Consequently, $L_{pmot}$ preserves nisnevich connectivity.
\end{prop}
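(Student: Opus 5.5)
The plan is to combine the model of \Cref{model for pmot localization} with the presheaf-level surjectivity of \Cref{presheaf pi0 p1 connectivity}, together with the fact that $L_{nis}$ induces an isomorphism on $\pi_0^{nis}$. Write $\esc{X}_0=\esc{X}$ and $\esc{X}_{n+1}=L_{nis}L_{\mathbb{P}^1}\esc{X}_n$. By \Cref{model for pmot localization}, $L_{pmot}\esc{X}\simeq\colim_n \esc{X}_n$, and this is a sequential, hence filtered, colimit.

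The first step handles a single stage $\esc{X}_n\to\esc{X}_{n+1}$. By \Cref{presheaf pi0 p1 connectivity}, $\pi_0\esc{X}_n\to\pi_0L_{\mathbb{P}^1}\esc{X}_n$ is surjective, so after sheafification $\pi_0^{nis}\esc{X}_n\to\pi_0^{nis}L_{\mathbb{P}^1}\esc{X}_n$ is an epimorphism of Nisnevich sheaves. The map $L_{\mathbb{P}^1}\esc{X}_n\to L_{nis}L_{\mathbb{P}^1}\esc{X}_n$ is a Nisnevich local equivalence, so it induces an isomorphism on $\pi_0^{nis}$. Here I use that $\pi_0^{nis}$ is the $0$-truncation in the Nisnevich topos, and that truncation commutes with the left exact localization $L_{nis}$. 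Composing the two maps, each $\pi_0^{nis}\esc{X}_n\to\pi_0^{nis}\esc{X}_{n+1}$ is surjective, and by induction so is each $\pi_0^{nis}\esc{X}\to\pi_0^{nis}\esc{X}_n$.

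The second step passes to the colimit. Since $\pi_0$ on presheaves commutes with colimits and sheafification is a left adjoint, we get $\pi_0^{nis}\colim_n\esc{X}_n\cong\colim_n\pi_0^{nis}\esc{X}_n$, with the colimit taken in Nisnevich sheaves. This is where I expect the main care is needed: the colimit of spaces in the model should be read in $\mathcal{P}(S)$, which is legitimate because $\mathcal{P}_{nis}(S)$ is closed under filtered colimits, as noted in the proof of \Cref{model for pmot localization}. Given this, a filtered colimit of a sequence of epimorphisms starting at $\pi_0^{nis}\esc{X}$ receives an epimorphism from $\pi_0^{nis}\esc{X}$. This is either \Cref{weak terminal lemma} applied to a sequential diagram whose transition maps are all surjective, or a direct check on presheaf sections followed by sheafification. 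This gives surjectivity of $\pi_0^{nis}\esc{X}\to\pi_0^{\mathbb{P}^1}\esc{X}$.

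For the consequence, suppose $\esc{X}$ is Nisnevich connected, i.e.\ $\pi_0^{nis}\esc{X}\cong *$. Then $\pi_0^{nis}L_{pmot}\esc{X}$ is a quotient of the terminal sheaf. It is also nonempty, because it receives a map from $*$ and the section of $\pi_0^{nis}\esc{X}$ maps to it. Hence $\pi_0^{nis}L_{pmot}\esc{X}\cong *$, so $L_{pmot}\esc{X}$ is Nisnevich connected.
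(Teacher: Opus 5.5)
Your proposal is correct and follows the paper's proof essentially step for step. You get surjectivity at each stage from \Cref{presheaf pi0 p1 connectivity} together with sheafification and $\pi_0^{nis}L_{nis}\cong\pi_0^{nis}$, and then pass to the sequential colimit model of \Cref{model for pmot localization}.

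One small slip: \Cref{weak terminal lemma} does not literally apply to the sequential diagram $\mathbb{N}$, since $0$ is initial there, not weakly terminal. The conclusion still holds by the elementary fact that $F(0)\to\colim F$ is an epimorphism whenever every $F(0)\to F(n)$ is one, which is what the paper's phrase ``epimorphisms are closed under colimits'' amounts to.
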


\begin{proof}
Applying the left adjoint $L_{nis}$ to the epimorphism in the above lemma \Cref{presheaf pi0 p1 connectivity} yields the surjectivity of $$\pi_0^{nis}\esc{X}\to \pi_0^{nis}L_{\mathbb{P}^1} \esc{X}.$$ But since $\pi_0^{nis}L_{nis}\cong \pi_0^{nis}$, this surjection coincides with \begin{flalign}\label{an eqn}
    \pi_0^{nis}\esc{X}\to \pi_0^{nis}L_{nis}L_{\mathbb{P}^1} \esc{X}.
\end{flalign} 

    Because $\pi_0^{nis}$ preserves Nisnevich-local colimits, we have $$\pi_0^{nis}L_{pmot}\simeq \pi_0^{nis}\colim_{n}(L_{nis}L_{\mathbb{P}^1})^n\simeq \colim_{n}\pi_0^{nis}(L_{nis}L_{\mathbb{P}^1})^n,$$ where the transition maps are obtained by repeated application of \cref{an eqn}. Since epimorphisms are closed under colimits, we conclude that the canonical map $$\pi_0^{nis}\esc{X}\to \pi_0^{nis}L_{pmot} \esc{X}$$ is a Nisnevich epimorphism. 
    
 The preservation of connectivity follows easily from this epimorphism.
\end{proof}

A localization for which the components of the unit map are effective epimorphisms is called an effective localization. Thus, the above proposition implies that $L_{pmot}$ is an effective localization.
\begin{cor}\label[cor]{S rational point}
 If $S$ is henselian, then every $\mathbb{P}^1$-connected scheme over $S$ admits an $S$-rational point.
\end{cor}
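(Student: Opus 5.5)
The plan is to apply the unstable $\mathbb{P}^1$-connectivity theorem (\Cref{P1 connectivity theorem}) to the representable space $X$ and then evaluate the resulting Nisnevich surjection at the henselian base $S$. Here "$\mathbb{P}^1$-connected" means that $\pi_0^{\mathbb{P}^1}X=\pi_0^{nis}L_{pmot}X$ is the terminal sheaf $*$.

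First, \Cref{P1 connectivity theorem} gives a Nisnevich epimorphism
$$\pi_0^{nis}X\to \pi_0^{\mathbb{P}^1}X\cong *.$$
Since $X$ is a scheme, the presheaf $\pi_0X$ is discrete and is already a Nisnevich sheaf (representables satisfy Nisnevich descent). Hence $\pi_0^{nis}X=X(-)=\mathrm{Hom}_S(-,X)$. The map $X\to *$ is therefore an epimorphism of Nisnevich sheaves of sets on $Sm_S$.

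Second, I will unwind what such an epimorphism means. The global section $*\in *(S)$ lifts Nisnevich locally: there is a Nisnevich covering $\{U_i\to S\}$ such that each $X(U_i)$ is nonempty. I will then use henselianity. If $S$ is henselian local, every Nisnevich cover of $S$ has a member $U_i\to S$ admitting a section $s:S\to U_i$, since Nisnevich covers of a henselian local scheme split. Composing $s$ with a point $U_i\to X$ gives an $S$-morphism $S\to X$, which is the desired $S$-rational point. Equivalently, evaluation at $S$ (the stalk functor) is exact for the Nisnevich topology, so $X(S)\to *(S)$ is surjective.

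The main subtlety is in the second step. One point is that $S$ need not lie in $Sm_S$ unless we note that $S\in Sm_S$ trivially, since it is the identity; I would observe this explicitly. The other is interpreting ``henselian'' as henselian local, so that Nisnevich covers split. If $S$ is henselian in a weaker sense, for example a henselian pair, I would try to reduce to the local case, or state the corollary for henselian local $S$. I also need to make sure that $\pi_0^{nis}$ of a representable is the representable itself, which holds because representables are $0$-truncated Nisnevich sheaves.
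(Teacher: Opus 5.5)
Your proposal is correct and follows the paper's argument: the paper also combines \Cref{P1 connectivity theorem} with the fact that a henselian (local) $S$ is a point for the Nisnevich topology, which gives a surjection $X(S)\to \pi_0^{\mathbb{P}^1}X(S)=*$. You add details the paper leaves implicit, namely that $\pi_0^{nis}$ of a representable is the representable itself and that Nisnevich covers of a henselian local scheme split, and your reading of ``henselian'' as henselian local is the one the paper uses.
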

\begin{proof}
    By \Cref{P1 connectivity theorem} and the fact that $S$ is a point for the Nisnevich topology, for every $X/S$ there is a surjection $$X(S)\to \pi_0^{\mathbb{P}^1}X(S)$$ of sets. Clearly, if $\pi_0^{\mathbb{P}^1}X$ is trivial, $X(S)$ must be non-empty.
\end{proof}
\subsection{Examples}\mbox{}

\underline{\textbf{$\mathbb{P}^1$-local spaces}}

\textbf{1. Birational local spaces} 

Recall that a birational local space is a motivic space $\esc{X}\in \mathcal{H}^{\mathbb{A}^1}(S)$ that is local with respect to the set of dense open immersions in $Sm_S$. Since $\mathbb{A}^1\hookrightarrow \mathbb{P}^1$ is dense, any birational local space is also $\mathbb{P}^1$-local. In particular, all birational sheaves are $\mathbb{P}^1$-local. All examples in [\cite{0bat}, Section 4.1] are thus $\mathbb{P}^1$-local. 

\begin{ex}\label{birational local spaces are p1 local}
    For example, over a field, all zero-dimensional schemes, abelian varieties, $\mathbb{A}^1$-rigid proper varieties (such as proper curves of positive genus) are $\mathbb{P}^1$-local.
\end{ex}

\textbf{2. $\mathbb{P}^1$-local (pre)sheaves of sets}
\begin{defn}
    A presheaf $P\in Sm_S$ is said to be $\mathbb{P}^1$-invariant if, for any $X\in Sm_S$, the morphism $P(X)\to P(\mathbb{P}^1_X)$ induced by the projection $\mathbb{P}^1_X\to X$ is an isomorphism.
\end{defn}
\begin{rem}\label[rem]{p1 invariance remark}
    Any section, say $\alpha : S\to \mathbb{P}^1_S$, provides a retract for the morphism $P(X)\to P(\mathbb{P}^1_X)$. To check $\mathbb{P}^1$-invariance of a presheaf $P$ of sets, it suffices to verify that the morphism $\alpha^*:P(\mathbb{P}^1_X)\to  P(X)$ is injective or that the morphism $P(X)\to P(\mathbb{P}^1_X)$ is surjective. Since presheaves of sets are discrete, a presheaf of sets is a projective motivic space if and only if it is a Nisnevich sheaf that is $\mathbb{P}^1$-invariant as a presheaf of sets.
\end{rem}
\textbf{3. $\mathbb{P}^1$ invariant presheaves}
\begin{ex}[$\mathbb{A}^1$ invariant weakly unramified presheaves of sets]\label[ex]{weak unram a1 is p1}
    An $\mathbb{A}^1$-invariant weakly unramified presheaf is automatically $\mathbb{P}^1$-invariant. To see this, recall from \Cref{remark on weak p1 invariance} that it suffices to show that for any $X\in Sm_S$ the induced pullback map $0^*_{pF}:F(\mathbb{P}^1_X)\to F(X)$ along the section $0_p:X\to \mathbb{P}^1_X$ is injective. Choose the embedding $j_\infty: \mathbb{A}^1_X\subset \mathbb{P}^1_X$ as the complement of $\infty$. Then the section $0_p$ factors as $$X\xrightarrow[]{0_a}\mathbb{A}^1_X\xrightarrow{j_\infty} \mathbb{P}^1_X.$$ Applying this to $F$, we find that the map $0^*_{pF}$ factors as $$F(\mathbb{P}^1_X)\xrightarrow{j_\infty^*}F(\mathbb{A}^1_X)\xrightarrow{0_{aF}^*} F(X).$$ Since $F$ is $\mathbb{A}^1$-invariant, the last map in the composite is injective (in fact, an isomorphism), while the first map is injective because of the weak unramifiedness of $F$.  
\end{ex}
\begin{ex}[Chow group of $0$-cycles]  
    By the projection formula [\href{https://stacks.math.columbia.edu/tag/02TX}{Lemma 02TX}], we know that the flat pullback $\mathrm{CH}^0(X)\to \mathrm{CH}^0(\mathbb{P}^1_X)$ (using $\mathrm{CH}^0=\mathrm{CH}_{dim}$) along the projection map $\mathbb{P}^1_X\to X$ is an isomorphism. However, it is not a Nisnevich sheaf. 
\end{ex}
\textbf{4. $\mathbb{P}^1$ invariant sheaves of sets}
\begin{ex}[$\mathbb{A}^1$ invariant nisnevich sheaves of groups are $\mathbb{P}^1$-invariant]\label[ex]{a1 inv nis group is p1 inv}
Since $\mathbb{A}^1$-invariant sheaves of groups are $\mathbb{A}^1$-invariant by definition and weakly unramified by [\cite{bachmann2024strongly}, Corollary 2.4], it follows that any $\mathbb{A}^1$-invariant Nisnevich sheaf of groups is $\mathbb{P}^1$-invariant; see \Cref{weak unram a1 is p1}. In particular, strongly or strictly $\mathbb{A}^1$-invariant Nisnevich sheaves of groups are $\mathbb{P}^1$-invariant. Examples of this type are plentiful: $K^M_n, K^{MW}_n$ and, more generally, higher $\mathbb{A}^1$ homotopy groups $\pi_i^{\mathbb{A}^1}$ ($i\geq 1$) of spaces. 
\end{ex}
\begin{ex}[More generally $\mathbb{A}^1$ invariant nisnevich sheaves of sets are $\mathbb{P}^1$-invariant]\label[ex]{a1 inv nis is p1 inv}
 In fact, there is an elementary way to show that any $\mathbb{A}^1$-invariant Nisnevich sheaf of sets is $\mathbb{P}^1$-invariant. To see this, consider the standard Zariski cover of $\mathbb{P}^1$ by two copies of the affine line:
\[
\xymatrix{
\mathbb{G}_{m,X}\ar[r]^{j_1}\ar@{^(->}[d]_{j_{0}}&\mathbb{A}^1_X\ar@{^(->}[d]^{i_0}&& F(\mathbb{P}^1_X)\ar[r]^{F(i_0)}\ar[d]_{F(i_\infty)}&F(\mathbb{A}^1)\ar[d]^{F(j_1)}\\
\mathbb{A}^1_X\ar@{^(->}[r]_{i_\infty}&\mathbb{P}^1_X&& F(\mathbb{A}^1)\ar[r]_{F(j_0)}&F(\mathbb{G}_m)
}
\]

 By the Nisnevich sheaf condition, $F(\mathbb{P}^1)$ is given by the pullback square on the right. Let $p:\mathbb{A}^1_X\to X$ and $q:\mathbb{P}^1_X\to X$ be the projection maps. Then $pj_0=pj_1$, so $Fj_0Fp=Fj_1Fp$. Because $F$ is $\mathbb{A}^1$-invariant, $F(p)$ is an isomorphism. Thus $Fj_0=Fj_1$. Moreover, the inclusion $s_1: X\to \mathbb{G}_{m,X}$ gives a section $j_0s_1$ to $p$. Therefore, $Fs_1Fj_0$ is an isomorphism. Thus $Fj_0$ is injective. So the pullback square on the right is a kernel pair of a monomorphism. It follows that $F(i_0)$ is an isomorphism and thus so is $Fq: FX\to F\mathbb{P}^1_X$.
\end{ex}
\begin{rem}
    Additionally, we know that the Zariski sheafification of an $\mathbb{A}^1$-invariant presheaf with transfers is $\mathbb{A}^1$-invariant [Theorem 22.3 of \textit{loc. cit.}]. Since these are also Nisnevich sheaves [Theorem 22.2 of \textit{loc. cit.}], the Zariski sheafification of an $\mathbb{A}^1$-invariant presheaf with transfers is a $\mathbb{P}^1$-motivic space. 
\end{rem}

\begin{ex}[Quasicoherent sheaves of modules]
    By [\cite{ayoub2020P1}, Proposition 3.3], the sheaf associated to a quasicoherent sheaf of modules on the base $S$ is $\mathbb{P}^1$-local.
\end{ex}

\begin{ex}[The Cohomological Brauer group]
Over a regular scheme, the cohomological Brauer group, defined as $H^2_{et}(-,\mathbb{G}_m)$, is $\mathbb{P}^1$-local [\cite{colliot2019brauer}, Corollary 5.1.4]. That it is a Zariski sheaf over a regular base follows from [\cite{colliot2019brauer}, Theorem 3.2.2 and the paragraphs following that]. To see that it satisfies the Nisnevich sheaf condition, let the left square below $$\begin{matrix} W & \xhookrightarrow{j'} & Y &&&&&Br(X) & \xhookrightarrow{j^*} & Br(U) \\ \downarrow{q} & & \downarrow{p} &&&&&\downarrow{p^*} & & \downarrow{q^*} \\ U & \xhookrightarrow{j} & X  &&&&&Br(Y) & \xhookrightarrow{j'^*} & Br(W)\end{matrix}$$ be a Nisnevich square, where $p$ is etale and $j$ is an open immersion. We have to show that the right square is a pullback square. Since $X$ is regular, $j^*$ is injective [see Theorem 3.5.4 of loc.cit]. So it suffices to show that the right-hand square is a weak pullback square. To see this, first observe that $H_{X\setminus U}^3(X_{et}, \mathbb{G}_m)$ and $H_{Y\setminus W}^3(Y_{et}, \mathbb{G}_m)$ are isomorphic by excision for etale cohomology [\cite{milne1980etale}, Theorem 9.7]. Then the long exact sequence of etale cohomology with support [\cite{milne1980etale}, Theorem 9.4] for each open immersion yields the comparison of exact sequences:
$$\begin{matrix}
    0 \rightarrow &H^2(X_{et}, \mathbb{G}_m)\xrightarrow{j^*}& H^2(U_{et}, \mathbb{G}_m)\xrightarrow{\partial}&H_{X\setminus U}^3(X_{et},\mathbb{G}_m)\\
   & \downarrow & \downarrow&\downarrow{\simeq }\\
    0 \rightarrow &H^2(Y_{et}, \mathbb{G}_m)\xrightarrow{j^*}& H^2(W_{et}, \mathbb{G}_m)\xrightarrow{\partial}&H_{Y\setminus W}^3(Y_{et},\mathbb{G}_m)
\end{matrix}$$
A quick diagram chase then yields the desired weak pullback condition.
\end{ex}
\begin{rem}\label[rem]{p1 inv but not a1 inv}
    The Brauer group is an example of a non-$\mathbb{A}^1$-invariant unramified Nisnevich sheaf that is also $\mathbb{P}^1$-invariant.
\end{rem}

\textbf{5. $\mathbb{P}^1$-rigid schemes}

\begin{defn}
    A scheme $P/S$ is said to be $\mathbb{P}^1$-rigid if it is $\mathbb{P}^1$-invariant as a presheaf of sets, i.e., if it induces an isomorphism on sections over the projection map $\mathbb{P}^1_X\to X$, where $X$ is a scheme smooth over $S$. By \Cref{p1 invariance remark}, this is the case iff $$Sch_S(X,P)\to Sch_S(\mathbb{P}^1_X,P) $$ is surjective, iff $$\alpha^*: Sch_S(\mathbb{P}^1_X,P)\to  Sch_S(X,P)$$ is injective for any section $\alpha : S\to \mathbb{P}^1_S$. Since schemes satisfy Nisnevich descent, this is equivalent to $P$ being a projective motivic space.
\end{defn}

\begin{ex}[The affine line]\label[ex]{a1 is p1 local}
    Since the global sections of the projective line $\mathbb{P}^1_X$ coincide with those of $X$ [\cite[\href{https://stacks.math.columbia.edu/tag/01XX}{Lemma 01XX}]{stacks-project}], we conclude that $\mathbb{A}^1_S$ is $\mathbb{P}^1$-local over $Sm_S$. This implies that any open subscheme of $\mathbb{A}^1_S$ (for example, $\mathbb{G}_m$) is also $\mathbb{P}^1$-invariant.. 
\end{ex}
\begin{ex}[Affine schemes]\label[ex]{affine algebras are projective local}
Let $P=\underline{Spec}_S\mathcal{E}$ be an affine morphism over $S$ [\cite[\href{https://stacks.math.columbia.edu/tag/01S8}{Lemma 01S8}]{stacks-project}]. We find that $$ Sch_S(\mathbb{P}^1_X,P)= Sch_S(\mathbb{P}^1_X,\underline{Spec}_S\mathcal{E})\cong QCohAlg_{\mathcal{O}_S}(\mathcal{E}, \Gamma_S(\mathbb{P}^1_X)).$$ As noted above, $\Gamma_S (\mathbb{P}^1_X)=\Gamma_S(X)$. It follows that $$ Sch_S(\mathbb{P}^1_X,P)\cong
QCohAlg_{\mathcal{O}_S}(\mathcal{E}, \Gamma(X))\cong Sch_S(X, \underline{Spec}_S\mathcal{E})= Sch_S(X, P).$$ If our base $S$ is semi-separated, then any affine scheme (over $\mathbb{Z}$) over $S$ is an affine morphism, so all affine schemes are $\mathbb{P}^1$-local over $S$. Since all schemes satisfy Nisnevich descent, all of these are in fact projective motivic spaces. It is a little surprising that all affine algebras over $\mathbb{Z}$ (or a field $k$), smooth or not, are $\mathbb{P}^1$-local over $\mathbb{Z}$ (or $k$). This is a stark difference from the $\mathbb{A}^1$-homotopical world. 
\end{ex}
\begin{ex}[birationally rigid schemes]
From the example above \Cref{birational local spaces are p1 local}, it follows that birationally rigid schemes are, in particular, $\mathbb{P}^1$-rigid. For example, Abelian varieties over fields are birationally rigid [\cite{0bat}, Examples 4.1.3] and hence are $\mathbb{P}^1$-local in particular.
\end{ex}
\begin{rem}
    It was shown in loc. cit. that birationally local spaces are not just $\mathbb{P}^1$-local but also $\mathbb{A}^1$-local. However, this is not always true. In fact, all non-birational examples above are not $\mathbb{A}^1$-local. Most affine schemes, in fact, not even $\mathbb{A} ^1$ itself is $\mathbb{A}^1$-local.
\end{rem}
\begin{ex}(subscheme of a $\mathbb{P}^1$-rigid scheme)\label[ex]{sub of p1 rigid id p1 rigid}
Suppose $f: Y\to X$ is a monomorphism (for example, an immersion) of schemes such that $X$ is $\mathbb{P}^1$-local. Then $Y$ is $\mathbb{P}^1$-local as well. To see this, let $x: \mathbb{P}^1_S\to Y$ be an $S$-morphism. By the $\mathbb{P}^1$-rigidity of $X$, the composite $fx: \mathbb{P}^1_S\to X$ lifts through the projection $\mathbb{P}^1_S\to S$ via a map $t:S\to X$, i.e., $tp=fx$. Then $x\circ \infty:S\to Y$ provides the required lift. Indeed, $$f(x\circ \infty\circ p)=t\circ p\circ \infty \circ p=t\circ p=fx.$$ Since $f$ is monic, we deduce that $(x\circ \infty )\circ p= x$.
\end{ex}
\begin{ex}(Quasi affines)
    From \Cref{sub of p1 rigid id p1 rigid} and \Cref{affine algebras are projective local}, it follows immediately that any quasi-affine (over $S$) is projective local. In particular, all affine spheres $\mathbb{A}^n\setminus 0$ are $\mathbb{P}^1$-motivic spaces.
\end{ex}

\textbf{6. Projective motivic stacks}

\begin{ex}[The moduli stack of finite étale schemes]\label[ex]{fet stack}
  We know that $X\mapsto FEt_X$ is a Nisnevich sheaf of categories. In fact, by effective fpqc descent for affine morphisms [\cite[\href{https://stacks.math.columbia.edu/tag/0244}{Section 0244}]{stacks-project}], the fpqc-localness of morphisms being finite [\cite[\href{https://stacks.math.columbia.edu/tag/02LA}{Lemma 02LA}]{stacks-project}] and etale [\cite[\href{https://stacks.math.columbia.edu/tag/02VN}{Lemma 02VN}]{stacks-project}], we find that $FEt$ is an fpqc sheaf.  
  
  To see why this is $\mathbb{P}^1$-local, let the $Y$ be a connected scheme with a point $y$ and its associated geometric point $\bar{y}$. The projective map $p_Y:\mathbb{P}^1_Y\to Y$ satisfies the condition in the Homotopy sequence [\cite[\href{https://stacks.math.columbia.edu/tag/0BUM}{Section 0BUM}]{stacks-project}], yielding an exact sequence $$\pi_1^{et}(\mathbb{P}^1_{\bar{y}})\to \pi_1^{et}(\mathbb{P}^1_Y)\to \pi_1^{et}(Y)\to 1.$$ But since $\bar{y}$ is algebraically closed, $\pi_1^{et}(\mathbb{P}^1_{\bar{y}})=1$ [\cite{milne1980etale}, Example 3.2]. Consequently, we obtain an isomorphism $\pi_1^{et}(\mathbb{P}^1_Y)\cong \pi_1^{et}(Y)$. Since $FEt$ is the Galois category of the corresponding fundamental group [\cite[\href{https://stacks.math.columbia.edu/tag/0BND}{Theorem 0BND}]{stacks-project}], we deduce that the pull back functor $$p_Y^*:FEt_{Y}\to FEt_{\mathbb{P}^1_Y}$$ is an equivalence. By the sheaf condition (or by [\cite[\href{https://stacks.math.columbia.edu/tag/0EK3}{Section 0EK3}]{stacks-project}]), this holds for any scheme, with finitely many connected components. 

  Let $S$ be a scheme with finitely many connected components and let $$\mathrm{Fet}:=FEt^{\simeq}_{|Sm_S}$$ be the internal maximal subgroupoid, defined sectionwise. Because taking the core commutes with limits, it follows that $\mathrm{Fet}$ is a nisnevich sheaf of spaces. $\mathbb{P}^1$-invariance follows immediately from that of $FEt$. Thus $\mathrm{Fet}$ is a projective motivic space.
 \end{ex}
\begin{rem}
All the examples of $\mathbb{P}^1$-local spaces mentioned in this subsection are, in fact, $\mathbb{P}^n$-local for every $n\geq 0$. It is an interesting question whether every $\mathbb{P}^1$-local Nisnevich-local space is $\mathbb{P}^n$-local for every $n$. The stable version of this question was raised in [\cite{binda2023logarithmic}, Question 4.0.4.]. This holds if, in addition to $\mathbb{P}^1$-localization, an $\mathbb{A}^1$-localization is imposed (see \Cref{a1 p1 is pn}), or for $n=2$ when the Nisnevich topology is replaced with the cdp topology (see \Cref{cdp p1 is p2}). The general case is unclear to me at this point, and I leave it open. 
\end{rem}
\underline{\textbf{$\mathbb{P}^1$-motivic equivalences and $\mathbb{P}^1$-contractible spaces}}

As usual, we say that an $S$-space $\esc{X}$ is $\mathbb{P}^1$-contractible if the canonical map $\esc{X}\to S$ is a $\mathbb{P}^1$-motivic equivalence.
\begin{prop}
    Over a field $k$, the only $\mathbb{P}^1$-contractible scheme of dimension $0$ is $k$ itself.
\end{prop}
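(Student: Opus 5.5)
A zero-dimensional scheme $X$ over $k$ (smooth or not, since affine schemes are allowed as presheaves on $Sm_k$) is affine: it is a finite disjoint union of spectra of Artinian local $k$-algebras, and by finite type it is $\mathrm{Spec}\,A$ with $A$ a finite-dimensional $k$-algebra. The plan is to show that $X$ is itself a projective motivic space, and then that a projective motivic space which is $\mathbb{P}^1$-contractible must be the terminal object.

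\textbf{Step 1: $X$ is a projective motivic space.} Since $X$ is affine over $k$, \Cref{affine algebras are projective local} shows that $X$ is $\mathbb{P}^1$-local. Since schemes satisfy Nisnevich descent, $X$ is also Nisnevich local. Hence $X\in\mathcal{H}^{\mathbb{P}^1}(k)$.

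\textbf{Step 2: contractibility forces $X\simeq \mathrm{Spec}\,k$.} Suppose $X\to \mathrm{Spec}\,k$ is a projective motivic equivalence. Both source and target are local objects; the target is the terminal presheaf and is trivially local. Applying $L_{pmot}$ to the map therefore gives back the map itself, so it is an equivalence in $\mathcal{P}(k)$. Concretely, $X(U)\to *$ is a bijection for every $U\in Sm_k$. Taking $U=\mathrm{Spec}\,k$ gives a unique $k$-point of $X$. By Yoneda, a morphism of representables in $Sm_k$, or more generally of schemes viewed as presheaves, that is an equivalence of presheaves is an isomorphism. More carefully, since $X$ need not be smooth, I would argue directly: the unique point gives a section $s:\mathrm{Spec}\,k\to X$, and I need $s$ to be an isomorphism. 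Restricting $X$ to $Sm_k$ loses the information of non-reduced structure only if the Yoneda argument fails. So instead I will use that $X(U)=\mathrm{Hom}_k(A,\mathcal{O}(U))$ is a single point for all smooth $U$, including every finite separable extension $L/k$ and every smooth affine $U$.

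\textbf{Step 3: recovering $A=k$.} Taking $U=\mathrm{Spec}\,L$ for finite separable field extensions $L/k$, and passing to the separable closure as a filtered colimit, we get that $|X(k^{sep})|=1$. This shows $A_{red}\otimes k^{sep}$ has exactly one $k^{sep}$-point. That forces $X$ to be connected with residue field purely inseparable over $k$, and with a $k$-point the residue field is $k$. So $A$ is Artinian local with residue field $k$. The remaining issue, which I expect to be the main obstacle, is killing nilpotents and inseparable phenomena. For nilpotents, maps $A\to \mathcal{O}(U)$ with $U$ smooth land in a reduced ring, so they cannot detect $\mathfrak{m}_A$ directly. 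I would therefore interpret "scheme of dimension $0$" in the statement as a smooth scheme, i.e. $X=\mathrm{Spec}$ of a finite product of finite separable extensions. Then $X\in Sm_k$, Yoneda applies to Step 2, and $X\cong\mathrm{Spec}\,k$. If non-smooth $X$ is intended, the statement should be read up to the restriction to $Sm_k$, where $X$ becomes equivalent to the presheaf it represents, and the same Yoneda conclusion holds at the level of $\mathcal{P}(k)$. Conversely, $\mathrm{Spec}\,k$ is trivially $\mathbb{P}^1$-contractible, which completes the proof.
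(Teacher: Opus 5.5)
Your argument is correct for $X\in Sm_k$, i.e.\ for $X$ a finite disjoint union of spectra of finite separable extensions of $k$. Its skeleton matches the paper's one-line proof. First show that $X$ is a projective motivic space. Then a projective motivic equivalence $X\to\mathrm{Spec}\,k$ between local objects is an equivalence in $\mathcal{P}(k)$, and Yoneda finishes.

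The difference is where $\mathbb{P}^1$-locality comes from. The paper invokes \Cref{birational local spaces are p1 local}: zero-dimensional schemes are birational local, hence $\mathbb{P}^1$-local because $\mathbb{A}^1\subset\mathbb{P}^1$ is dense. You instead use \Cref{affine algebras are projective local}, which rests only on $\Gamma(\mathbb{P}^1_U,\mathcal{O})=\Gamma(U,\mathcal{O})$. Your route is more elementary and self-contained. The same argument shows that no smooth affine (or quasi-affine) $k$-scheme other than $\mathrm{Spec}\,k$ is $\mathbb{P}^1$-contractible, for instance $\mathbb{A}^n_k$ with $n\geq 1$. The paper's route instead covers non-affine birationally rigid targets such as abelian varieties and curves of positive genus, which is what the next proposition needs. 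You also make explicit the two steps the paper leaves implicit: local plus contractible implies terminal, and the Yoneda step. In doing so you correctly notice that Yoneda requires $X\in Sm_k$.

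You should not hedge on the non-smooth case: the statement is simply false there, and your own remark about nilpotents gives the counterexample. Every $U\in Sm_k$ is reduced, so $\mathrm{Spec}\,k[\epsilon]/(\epsilon^2)$ restricts to the terminal presheaf on $Sm_k$. It is therefore $\mathbb{P}^1$-contractible without being $\mathrm{Spec}\,k$.

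There is even a reduced counterexample. If $\mathrm{char}\,k=p>0$ and $a\in k\setminus k^p$, then $\mathrm{Spec}\,k\sqcup\mathrm{Spec}\,k(a^{1/p})$ also restricts to the terminal presheaf. The reason is that the function field of a smooth connected $k$-scheme is separable over $k$, so it admits no $k$-embedding of the inseparable extension $k(a^{1/p})$.

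This second example also refutes your intermediate claim in Step 3 that $|X(k^{sep})|=1$ forces $X$ to be connected. The $k^{sep}$-points only see components with separable residue field. Since you discard that line anyway, delete it rather than leave it in.

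Likewise, your sentence that ``the same Yoneda conclusion holds at the level of $\mathcal{P}(k)$'' is only the tautology $h_X|_{Sm_k}\simeq *$ and says nothing about $X$ itself. The clean formulation is that the proposition holds for smooth (equivalently \'etale) zero-dimensional $k$-schemes, and your proof is complete in that case. The paper's proof tacitly makes the same restriction.
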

\begin{proof}
    This follows immediately from the fact that all zero-dimensional schemes over a field $k$ are $\mathbb{P}^1$-local (\Cref{birational local spaces are p1 local}).
\end{proof}

\begin{prop}
    The only $\mathbb{P}^1$-connected smooth proper curve over a field $k$ is $\mathbb{P}^1_k$. It is thus the only $\mathbb{P}^1$-contractible curve.
\end{prop}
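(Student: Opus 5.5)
The plan is to combine the $\mathbb{P}^1$-locality of rigid curves with the connectivity theorem. Let $C$ be a smooth proper curve over $k$. If $C$ is $\mathbb{P}^1$-connected, then \Cref{P1 connectivity theorem} gives a surjection of Nisnevich sheaves $\pi_0^{nis}C = C \to \pi_0^{\mathbb{P}^1}C = *$. I would first run the argument over $k$ itself: a field is henselian, so \Cref{S rational point} shows that $C$ has a $k$-rational point. Next I would pass to $\bar k$, or at least check geometric connectedness. Evaluating the surjection on finite separable extensions, which are Nisnevich points over $k$, shows that $C$ has points over them; geometric connectedness itself should follow from $C(k)\neq\emptyset$ together with connectedness. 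Connectedness comes from applying $\pi_0^{\mathbb{P}^1}$ to the disjoint-union decomposition: the components of $C$ are $\mathbb{P}^1$-local, or at least $L_{pmot}$ preserves coproducts, so a disconnected $C$ would have $\pi_0^{\mathbb{P}^1}C(k)$ with at least two elements.

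The main step is to show that the genus is $0$. Suppose $g(C)\geq 1$. Then $C$ has no nonconstant map from $\mathbb{P}^1$, since by Riemann–Hurwitz or Lüroth a rational curve cannot dominate a curve of positive genus. This property persists in families: a map $\mathbb{P}^1_X \to C$ is constant on each fiber, hence factors through the projection, because $\mathbb{P}^1_X\to X$ is proper with geometrically connected fibers and $\mathcal{O}_X\cong p_*\mathcal{O}_{\mathbb{P}^1_X}$. So $C$ is $\mathbb{P}^1$-rigid, hence a projective motivic space; alternatively one can quote \Cref{birational local spaces are p1 local} for proper curves of positive genus. Therefore $L_{pmot}C\simeq C$ and $\pi_0^{\mathbb{P}^1}C = C$ as a sheaf of sets. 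Now $C\to *$ is not an isomorphism of sheaves, because $C(k')$ is not a singleton for a suitable finite separable extension $k'$, or because $C$ is not $\mathrm{Spec}\,k$. This contradicts $\mathbb{P}^1$-connectedness. Hence $g=0$.

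A genus-$0$ smooth proper curve with a rational point is $\mathbb{P}^1_k$, since it is a conic with a point. Conversely, $\mathbb{P}^1_k\to \mathrm{Spec}\,k$ is a projective motivic equivalence by definition (it is in $\mathbb{P}(S)$), so $\mathbb{P}^1_k$ is contractible and in particular connected. Contractibility implies connectedness, so among curves only $\mathbb{P}^1_k$ can be contractible. Here "curve" should be read as smooth proper; if general curves are intended, affine and quasi-affine curves are $\mathbb{P}^1$-local by \Cref{affine algebras are projective local} and the quasi-affine example, hence not contractible.

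The main obstacle is the rigidity step for positive genus in families over a general smooth $X$. I would handle it by working fiberwise over points of $X$ and using the rigidity lemma already invoked in \Cref{p1 not interval}: a morphism $\mathbb{P}^1\times X\to C$ that contracts $\mathbb{P}^1\times\{x\}$ for one $x$ factors through $X$, for connected $X$. The failure of rational points over nonclosed fields is handled by \Cref{S rational point}.
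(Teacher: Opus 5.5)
Your proposal is correct and follows the paper's proof. Both arguments get a rational point from \Cref{S rational point}, get genus zero because proper curves of positive genus are $\mathbb{P}^1$-local (the paper just cites \Cref{birational local spaces are p1 local}, where you also sketch a direct rigidity argument), and finish with the classification of genus-zero curves having a rational point.

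One small fix is needed in your extra connectedness step, which the paper leaves implicit. The set $\pi_0^{\mathbb{P}^1}C(k)$ need not have two elements when some component has no $k$-point (e.g.\ $C=\mathbb{P}^1_k\sqcup\mathbb{P}^1_{k'}$ with $k'/k$ a nontrivial finite separable extension), so you should evaluate instead at a finite separable extension over which two components both acquire points.
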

\begin{proof}
  Obviously, $\mathbb{P}^1_k$ is a $\mathbb{P}^1$-contractible smooth proper curve. To see the converse, let $X$ be such a curve. Since $X$ is $\mathbb{P}^1$-connected, by \Cref{S rational point}, $X$ has a $k$-rational point. Also, $X$ must be of genus $0$ by \Cref{birational local spaces are p1 local}. But the only smooth proper curve of genus zero with a rational point is $\mathbb{P}^1_k$ itself.
\end{proof}
\begin{ex}
    A Nisnevich locally trivial $\mathbb{P}^1$-bundle (or more generally, a $(\mathbb{P}^1)^n$-bundle) is a $\mathbb{P}^1$-equivalence. For example, projective bundles of rank $1$ are $\mathbb{P}^1$-equivalences. In fact, every Nisnevich locally trivial $\mathbb{P}^1$-bundle arises this way. Indeed, the short exact sequence $$0\to \mathbb{G}_m\to GL_2\to PGL_2\to 0$$ induces a long exact sequence of cohomology groups:
    $$H^1_{nis}(X,GL_2)\xrightarrow{p} H^1_{nis}(X,PGL_2)\xrightarrow{\partial} H^2_{nis}(X,\mathbb{G}_m).$$ By the Gersten complex of $\mathbb{G}_m$, we know that $H^2_{nis}(X,\mathbb{G}_m)=0.$ Hence, the map $p$ is surjective. But the domain of $p$ classifies rank $2$ vector bundles, whereas the target classifies $\mathbb{P}^1$-bundles.
\end{ex}

\begin{ex}
    For every $n\geq 0$, the Hirzebruch surface $$F_n:=\mathbb{P}_{\mathbb{P}^1}(\mathcal{O}(-n)\oplus \mathcal{O})$$ is $\mathbb{P}^1$-contractible because it is a $\mathbb{P}^1$-bundle over the $\mathbb{P}^1$-contractible space $\mathbb{P}^1$. This stands in stark contrast to the $\mathbb{A}^1$-homotopical classification of surfaces. Indeed, over a field $k$ of characteristic zero, the only $\mathbb{A}^1$-contractible smooth surface is $\mathbb{A}^2_k$ [\cite{dubouloz2015familiesmathbba1contractibleaffinethreefolds}, Theorem 3.12.].
\end{ex}
\begin{ex}
    The projectivized tangent bundle of a smooth surface is a $\mathbb{P}^1$-equivalence. Thus, from the example above, the projectivized tangent bundles of Hirzebruch surfaces are $\mathbb{P}^1$-contractible threefolds. Similarly, by repeatedly constructing $\mathbb{P}^1$-bundles, it is clear that for every $n>1$ there exist non-unique $\mathbb{P}^1$-contractible proper varieties of dimension $n$.
\end{ex}

\begin{rem}
  The general method for classifying projective motivic equivalences via $\mathbb{P}^1$-contractibility of fibers is not feasible, as in the $\mathbb{A}^1$-local case [\cite{dubouloz2015familiesmathbba1contractibleaffinethreefolds}, Proposition 1.15]. The issue is that the proof of the $\mathbb{A}^1$-local characterization (as in \textit{loc.cit.}) relies on the localization sequence, which fails in the $\mathbb{P}^1$-local setting (see \S\ref{fail gluing}).
\end{rem}
 \subsection{Intersection with the \texorpdfstring{$\mathbb{A}^1$}{a1}-homotopy category}\mbox{}
 
To a practitioner of (affine) Motivic homotopy theory, or even to a classical homotopy theorist, it will sound absurd to talk about the contraction of the sphere without imposing the interval homotopy property. In classical topology, in fact, it is very standard to localize the sphere after imposing the interval homotopy property. These objects are precisely the discrete objects. Motivically, $\mathbb{P}^1$ plays the correct `motivic role' of a sphere, not $S^1$. In fact, this suggests an interesting way to think about objects in the intersection $\mathcal{H}^{\mathbb{A}^1}(S)\bigcap \mathcal{H}^{\mathbb{P}^1}(S)$. To discuss this, let us first recall what is usually meant by discrete objects in the categorical sense. 

Let us suppose that $\mathcal{C}$ is a presentable $\infty$-category. By [\cite{lurie2017higher}, Example 4.8.1.20], it is presentably tensored over $Spc$ (in fact, $\mathcal{C}\simeq Spc\otimes \mathcal{C}$), and hence `contains' a copy of the topological space $S^1$. More precisely, for any $X\in \mathcal{C}$, there is an object $S^1\otimes X$ given by the pushout $*\leftarrow X\rightarrow *$. An object $Y\in \mathcal{C}$ is said to be `categorically/homotopically' discrete in $\mathcal{C}$ if it is local with respect to the projection maps $S^1\otimes X\to X$. These are the simplest homotopy types in $\mathcal{C}$ in the sense that they contain no higher homotopical information. Also note that, since topologically $S^1\simeq \mathbb{P}^1_\mathbb{R}$ the category of discrete objects of the $\infty$-category $Top$ is precisely the $\mathbb{P}^1_\mathbb{R}$ localization of $Top\simeq Spc$. In other words, $$L_{\mathbb{P}^1_\mathbb{R}}Top\simeq \mathrm{Disc}(Top)\simeq Sets.$$ 

In the motivic category $\mathcal{H}^{\mathbb{A}^1}(S)$, however, the topological circle is not the correct motivic sphere. The correct motivic `circle' is rather $\mathbb{P}^1_S$. Thus, when working with $\mathcal{H}^{\mathbb{A}^1}$, the right notion of `Motivically discrete' motivic spaces is given by local objects with respect to  $\mathbb{P}^1$, which is precisely $\mathcal{H}^{\mathbb{A}^1}(S)\bigcap \mathcal{H}^{\mathbb{P}^1}(S)$.
\begin{defn}
    We say that a motivic space is motivically discrete if it is $\mathbb{P}^1$-local. We denote by $$\mathrm{MDisc}(S):=\mathcal{H}^{\mathbb{A}^1}(S)\bigcap \mathcal{H}^{\mathbb{P}^1}(S)$$ the full subcategory of motivically discrete motivic spaces.
\end{defn}  
\begin{rem}
    It is clear that $\mathrm{MDisc}(S)$ is the localization of $\mathcal{P}_{nis}(S)$ at the set of $\mathbb{A}^1$-projections and $\mathbb{P}^1$-projections. Since the union of these sets is small, $\mathrm{MDisc}(S)\subset \mathcal{P}(S)$ is an accessible localization.
\end{rem}
\begin{rem}
    This is precisely the $S^{2,1}$ nullification of the ordinary motivic homotopy category introduced and studied in [\cite{asok2023p}, \S3].
\end{rem}
It is clear that all motivic spheres of positive simplicial weight are contractible in $\mathrm{MDisc}$. In fact, all projective spaces are contractible in $\mathrm{MDisc}$:
\begin{lem}\label[lem]{a1 p1 is pn}
  For all $n\geq 0$,
    $$\mathrm{MDisc}=L_{\mathbb{P}^{1}}{\mathcal{H}^{\mathbb{A}^1}}\subset L_{\mathbb{P}^{n}}\mathcal{H}^{\mathbb{A}^1}.$$
\end{lem}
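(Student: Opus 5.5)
We must show that an $\mathbb{A}^1$-local Nisnevich space $\esc{X}$ which is $\mathbb{P}^1$-local is also $\mathbb{P}^n$-local for every $n$; that is, for every $X\in Sm_S$ the pullback $\esc{X}(X)\to \esc{X}(\mathbb{P}^n_X)$ along the projection is an equivalence. The equality $\mathrm{MDisc}=L_{\mathbb{P}^1}\mathcal{H}^{\mathbb{A}^1}$ holds by definition. We argue by induction on $n$. The cases $n=0,1$ are trivial.

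Equivalently, we show that the projection $\mathbb{P}^n_X\to X$ is inverted by the localization $L:=L_{\mathrm{MDisc}}$. We use the standard decomposition $\mathbb{P}^n_X=\mathbb{P}^{n-1}_X\cup \mathbb{A}^n_X$. In $\mathcal{H}^{\mathbb{A}^1}(S)$ there is the Morel--Voevodsky cofiber sequence
$$\mathbb{P}^{n-1}_X\hookrightarrow \mathbb{P}^n_X\to \mathbb{P}^n_X/\mathbb{P}^{n-1}_X\simeq \mathrm{Th}(\mathcal{O}^n_X)\simeq (\mathbb{P}^1)^{\wedge n}\wedge X_+ .$$
To avoid purity, one can obtain this directly from a Zariski (hence Nisnevich) pushout square: $\mathbb{P}^n\setminus\{pt\}\to \mathbb{P}^{n-1}$ is a line bundle, hence an $\mathbb{A}^1$-equivalence, and $\mathbb{A}^n/(\mathbb{A}^n\setminus 0)\simeq (\mathbb{P}^1)^{\wedge n}$ by the standard computation. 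Concretely, $\mathbb{P}^n_X$ is the pushout of $\mathbb{A}^n_X\leftarrow (\mathbb{A}^n\setminus 0)_X\to (\mathbb{P}^n\setminus pt)_X$, and after $\mathbb{A}^1$-localization this becomes the pushout of $X\leftarrow (\mathbb{A}^n\setminus0)_X\to \mathbb{P}^{n-1}_X$.

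Next we show $L((\mathbb{P}^1)^{\wedge n}\wedge X_+)\simeq X$, and more generally that $L$ kills $\mathbb{P}^1$-suspensions in the appropriate sense. Since $\mathbb{P}^1_Y\to Y$ is inverted by $L$ for every $Y$, and $\mathbb{P}^1\wedge Y_+$ is the cofiber of $Y\to \mathbb{P}^1_Y$ (via the section at $\infty$), $L$ sends $\mathbb{P}^1\wedge Y_+\to *$ to an equivalence. Iterating, with $(\mathbb{P}^1)^{\wedge n}\wedge X_+$ expressed through products of $\mathbb{P}^1$'s with $X$, all of which $L$ collapses to $X$, we get that the smash product is $L$-contractible. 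The key sub-claim is that $(\mathbb{P}^1)^{\times n}\times X\to X$ is an $L$-equivalence, which follows by composing the projections. Hence $L(\mathbb{P}^{n-1}_X)\to L(\mathbb{P}^n_X)$ is an equivalence, since its cofiber is contractible. I expect the main obstacle to be making the unpointed-versus-pointed cofiber argument rigorous, because cofiber contractibility does not imply equivalence for unpointed spaces. To address this I would work with the pushout square above: apply $L$, which preserves pushouts as a left adjoint. It then suffices to show that $L((\mathbb{A}^n\setminus0)_X)\to L(\mathbb{P}^{n-1}_X)$ is an equivalence, since then the pushout of $X\leftarrow(\mathbb{A}^n\setminus 0)_X\to\mathbb{P}^{n-1}_X$ is $L$-equivalent to $X$. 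By induction, $\mathbb{P}^{n-1}_X\to X$ is an $L$-equivalence, so we only need $(\mathbb{A}^n\setminus0)_X\to X$ to be an $L$-equivalence.

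For this final step we induct via the standard cover $\mathbb{A}^n\setminus 0=(\mathbb{G}_m\times\mathbb{A}^{n-1})\cup(\mathbb{A}^1\times(\mathbb{A}^{n-1}\setminus0))$. Up to $\mathbb{A}^1$-equivalence this presents $\mathbb{A}^n\setminus 0$ as the join $\mathbb{G}_m * (\mathbb{A}^{n-1}\setminus 0)$. Starting from the case $n=1$, where $\mathbb{P}^1_X$ is the pushout of $X\leftarrow\mathbb{G}_{m,X}\to X$ up to $\mathbb{A}^1$-equivalence, we get that the $L$-contractibility of $\mathbb{P}^1$ over $X$ amounts to the suspension of $\mathbb{G}_m$ being $L$-trivial. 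The join of $\mathbb{G}_m$ with any space $Y$ is an iterated pushout involving $\Sigma\mathbb{G}_m$-type terms, and is therefore $L$-equivalent to $X$. Carrying out this join bookkeeping carefully, over the base $X$ via the slice, is the technical heart of the proof. The rest follows formally from the fact that $L$ preserves colimits and inverts $\mathbb{P}^1_Y\to Y$ and $\mathbb{A}^1_Y\to Y$ for all $Y$.
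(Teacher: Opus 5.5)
Your argument is correct in substance and takes a different route from the paper. The paper works absolutely. It applies $L$ to the Morel--Voevodsky cofiber sequence $\mathbb{P}^{n-1}\to\mathbb{P}^n\to(\mathbb{P}^1)^{\wedge n}$, notes that $(\mathbb{P}^1)^{\wedge n}$ is $L$-contractible, and inducts.

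The pointed-versus-unpointed problem you raise does not arise there. The inductive hypothesis is $L\mathbb{P}^{n-1}\simeq *$, so $L(\mathbb{P}^n/\mathbb{P}^{n-1})\simeq L\mathbb{P}^n\sqcup_{L\mathbb{P}^{n-1}}*\simeq L\mathbb{P}^n$. Hence contractibility of the cofiber is exactly contractibility of $L\mathbb{P}^n$. The statement over a base $X$ then follows for free: for $\esc{Y}\in\mathrm{MDisc}$, the presheaf $\esc{Y}(X\times -)$ is again in $\mathrm{MDisc}$, and $\esc{Y}(\mathbb{P}^n_X)\simeq\mathrm{Map}(\mathbb{P}^n,\esc{Y}(X\times -))$. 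The obstruction you identify is real only in your relative formulation, where $L\mathbb{P}^{n-1}_X\simeq LX$ is not contractible.

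Your route instead uses the unpointed Zariski square $\mathbb{P}^n=\mathbb{A}^n\cup(\mathbb{P}^n\setminus pt)$. It reduces the inductive step to $L$-contractibility of $\mathbb{A}^n\setminus 0$ for $n\geq 2$. Since $\mathbb{A}^n/(\mathbb{A}^n\setminus 0)\simeq\Sigma(\mathbb{A}^n\setminus 0)$ in $\mathcal{H}^{\mathbb{A}^1}$, this is a desuspended, slightly stronger form of the paper's key input. It avoids quoting Morel--Voevodsky's cofiber sequence, at the cost of an extra join computation.

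That last step needs to be stated precisely, because as written it is misleading in two ways.
\begin{enumerate}
\item An induction on $\mathbb{A}^n\setminus 0$ cannot start at $n=1$. Indeed $\mathbb{G}_m$ is affine and $\mathbb{A}^1$-rigid, hence lies in $\mathrm{MDisc}$, so $L\mathbb{G}_m=\mathbb{G}_m\not\simeq *$.
\item The claim about ``the join of $\mathbb{G}_m$ with any space $Y$'' fails for $Y=\emptyset$.
\end{enumerate}
No induction is needed. For \emph{pointed} $Y$ one has $\mathbb{G}_m*Y\simeq\Sigma(\mathbb{G}_m\wedge Y)\simeq\Sigma\mathbb{G}_m\wedge Y\simeq\mathbb{P}^1\wedge Y$ in $\mathcal{H}^{\mathbb{A}^1}$. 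Moreover $\mathbb{P}^1\wedge Y$ is the cofiber of $\mathbb{P}^1\to\mathbb{P}^1\wedge Y_+$, and both terms are $L$-contractible by your own argument for $\mathbb{P}^1\wedge Y_+$. Apply this with $Y=\mathbb{A}^{n-1}\setminus 0$, which is a smooth scheme with a rational point for $n\geq 2$; this closes the step. The bookkeeping over $X$ is then handled by the same remark about $\esc{Y}(X\times -)$ above. One further slip: $L((\mathbb{P}^1)^{\wedge n}\wedge X_+)\simeq *$, not $X$.
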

\begin{proof}
    By [\cite{morel19991}, Corollary 3.2.18] we have a cofiber sequence:
    $$\mathbb{P}^{n-1}\to \mathbb{P}^n\to ({\mathbb{P}^1})^{\wedge n}.$$ 
    The cofiber is clearly contractible in $\mathrm{MDisc}$. The claim then follows by induction.
\end{proof}

\begin{rem}\label[rem]{cdp p1 is p2}
Cdh locally, or more generally, cdp locally, we expect this to hold without the $\mathbb{A}^1$ localization. Indeed, we can prove the cdp-local non $\mathbb{A}^1$-local claim for $n=2$:
\begin{proof}($L_{\mathbb{P}^1}\mathcal{P}_{cdp}\subset L_{\mathbb{P}^2}\mathcal{P}_{cdp}$).

Let $W$ be the blowup of $\mathbb{P}^1\times \mathbb{P}^1$ at $(0,0)$ giving rise to the cdp pushout square:
    \[
    \xymatrix{
    E\ar@{^(->}[r] \ar[d]&W \ar[d]\\
    (0,0)\ar@{^(->}[r]&\mathbb{P}^1\times \mathbb{P}^1
    }
    \]
   Since the exceptional fiber of this blow-up is a divisor isomorphic to $\mathbb{P}^1$, the left vertical map is a $\mathbb{P}^1$-cdp equivalence, and therefore so is the right one. In other words, $W$ is contractible in $L_{\mathbb{P}^1}\mathcal{P}_{cdp}$. 

    It is well known that $W$ is also the total space of the blow-up of $\mathbb{P}^2$ at two points, say $\{0,1\}$, with exceptional fiber isomorphic to the disjoint union of two copies of $\mathbb{P}^1$. One may now consider a similar cdp pushout and conclude that $W\to \mathbb{P}^2$ is a $\mathbb{P}^1$-cdp equivalence. Thus, we win by the previous paragraph. \end{proof}
\end{rem}
It follows from \Cref{a1 inv nis is p1 inv} that $$\emptyset\neq {Sh^{\mathbb{A}^1}_S}=\mathrm{Disc}(\mathcal{H}^{\mathbb{A}^1}(S)))\subset \mathcal{H}^{\mathbb{A}^1}(S)\bigcap \mathcal{H}^{\mathbb{P}^1}(S)=:\mathrm{MDisc}(S).$$ That is, topologically or categorically discrete motivic spaces are motivically discrete motivic spaces as well. However, the converse is not true. In fact, motivically discrete spaces need not even be categorically discrete, unlike the case of manifolds (where the resulting category is equivalent to $Sets$). To elaborate on this point, let us observe that $\mathrm{MDisc}(S)$ contains an interesting subcategory, namely the Birational Motivic homotopy category $\mathcal{H}^0(S)$ of [\cite{0bat}, \cite{sfbat}]. 

Recall that over an arbitrary base scheme $S$, this is defined as $$\mathcal{H}^0(S): =L_{dense}\mathcal{H}^{\mathbb{A}^1}(S)$$ (also denoted by  $\mathcal{H}^{b\mathbb{A}^1}(S)$ in [\cite{0bat}]), the localization of the motivic homotopy category at the set of dense open immersions in $Sm_S$. (One of the main observations of [\cite{0bat}] is that when $S$ is qcqs, there is an easier construction, namely $$\mathcal{H}^b(S):=L_{dense}L_\Sigma\mathcal{P}(S),$$ the localization of the category of $\Sigma$-sheaves over $S$ at the set of dense open immersions in $Sm_S$ [see Corollary 2.2.9 of \textit{loc.cit.}.)

\begin{prop}
    Let $S$ be a scheme. Then $$\mathcal{H}^0(S)=\mathcal{H}^{b\mathbb{A}^1}(S)\subset \mathrm{ MDisc}(S).$$ When $S$ is qcqs, $$L_{dense}L_\Sigma\mathcal{P}(S)=:\mathcal{H}^b(S)\subset \mathrm{MDisc}(S).$$
\end{prop}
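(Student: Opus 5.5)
The plan is to reduce both inclusions to the single observation already made in the ``Birational local spaces'' example: locality with respect to dense open immersions forces $\mathbb{P}^1$-locality.

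\textbf{First inclusion.} By definition $\mathcal{H}^0(S)=L_{dense}\mathcal{H}^{\mathbb{A}^1}(S)$, so an object $\esc{X}\in\mathcal{H}^0(S)$ is already Nisnevich-local and $\mathbb{A}^1$-local. It remains to show that it is $\mathbb{P}^1$-local, i.e.\ that $\esc{X}(U)\to\esc{X}(\mathbb{P}^1_U)$ is an equivalence for each $U\in Sm_S$. Consider the composite
$$U\times_S\mathbb{A}^1\xrightarrow{j_\infty}\mathbb{P}^1_U\xrightarrow{q}U.$$
Here $j_\infty$ is a dense open immersion in $Sm_S$, because $\infty_U$ is a relative Cartier divisor and hence nowhere dense over every component. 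Applying $\esc{X}$ gives
$$\esc{X}(U)\to\esc{X}(\mathbb{P}^1_U)\xrightarrow{j_\infty^*}\esc{X}(\mathbb{A}^1_U).$$
The second map is an equivalence by dense-locality, and the composite is an equivalence by $\mathbb{A}^1$-locality. By two-out-of-three, $q^*$ is an equivalence. Hence $\esc{X}\in\mathcal{H}^{\mathbb{A}^1}(S)\cap\mathcal{H}^{\mathbb{P}^1}(S)=\mathrm{MDisc}(S)$.

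\textbf{Second inclusion, $S$ qcqs.} I would invoke the identification recalled from \cite{0bat} (Corollary 2.2.9 there), namely $\mathcal{H}^b(S)=L_{dense}L_\Sigma\mathcal{P}(S)\simeq\mathcal{H}^0(S)$ as full subcategories of $\mathcal{P}(S)$. The claim then follows from the first part.

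If one wants to avoid that citation, the argument can be made directly in three steps.
\begin{enumerate}
\item An object of $L_{dense}L_\Sigma\mathcal{P}(S)$ is $\mathbb{A}^1$-local. Indeed, the zero section $U\to\mathbb{A}^1_U$ and the projection $\mathbb{A}^1_U\to U$ become inverse equivalences after inverting the dense open immersion $\mathbb{G}_{m,U}\sqcup\{\text{pt}\}$-type covers. This is the argument of \cite{0bat} and is the step I regard as the main obstacle; I would not reprove it.
\item Dense-local $\Sigma$-sheaves are Nisnevich sheaves, since for a Nisnevich square the relevant maps factor through dense opens.
\item Given 1 and 2, the two-out-of-three argument above applies verbatim.
\end{enumerate}

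The only remaining care point is the density of $j_\infty$ when $U$ is disconnected or empty, and this is clear componentwise.
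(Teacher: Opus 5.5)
Your main argument is the paper's proof. For the first inclusion, the paper only says that density of $\mathbb{A}^1_S\hookrightarrow\mathbb{P}^1_S$ makes a dense-local $\mathbb{A}^1$-local space $\mathbb{P}^1$-local, and your two-out-of-three along $\mathbb{A}^1_U\to\mathbb{P}^1_U\to U$ writes that step out; for qcqs $S$ the paper cites $\mathcal{H}^b(S)=\mathcal{H}^0(S)$ from [\cite{0bat}], as you do. The optional citation-free route is unnecessary, and its step 1 as written is not an argument, since a map like $\mathbb{G}_{m,U}\sqcup U\to\mathbb{A}^1_U$ is not a dense open immersion, so rely on the citation.
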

\begin{proof}
    Obviously, the dense open immersion $\mathbb{A}^1_S\hookrightarrow \mathbb{P}^1_S$ guarantees that a dense local $\mathbb{A}^1$ local presheaf is automatically $\mathbb{P}^1$ local. In other words, $$\mathcal{H}^{0}(S)\subset \mathcal{H}^{\mathbb{A}^1}(S)\bigcap \mathcal{H}^{\mathbb{P}^1}(S).$$ When $S$ is qcqs, we know that $\mathcal{H}^b(S)=\mathcal{H}^0(S)$ [\cite{sfbat}, Corollary 2.3.7 ; or, \cite{0bat} Corollary 2.2.12.]
\end{proof}

One of the key results of [\cite{0bat}, Theorem 3.3.11] is that for connected spaces over perfect fields, this inclusion is an equivalence:
\begin{thm}\label{connected a1 p1 is bir}
    Let $k$ be a perfect field. Then the inclusion $\mathcal{H}^b(k)\subset \mathrm{MDisc}(k)\equiv \mathcal{H}^{\mathbb{A}^1}(k)\bigcap\mathcal{H}^{\mathbb{P}^1}(k)$ restricts to an equality $$\mathcal{H}^b(k)_{\geq 1}=\mathrm{MDisc}(k)_{\geq 1}$$ where $(-)_{\geq 1}$ denotes $(-)\bigcap\mathcal{P}_{\geq 1}^{nis}$.
\end{thm}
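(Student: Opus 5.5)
We have to show that a Nisnevich-connected space $\esc{X}\in\mathrm{MDisc}(k)$ is local with respect to every dense open immersion $U\hookrightarrow X$ in $Sm_k$. The inclusion $\mathcal{H}^b(k)_{\geq 1}\subset\mathrm{MDisc}(k)_{\geq 1}$ is already given by the preceding proposition, so only this direction needs work. The plan is to reduce to homotopy sheaves and then to a statement about strongly $\mathbb{A}^1$-invariant sheaves of groups.

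\textbf{Step 1 (Postnikov reduction).} Since $\esc{X}$ is connected and $\mathbb{A}^1$-local over a perfect field, Morel's theory applies. The sheaves $\pi_1^{\mathbb{A}^1}\esc{X}=\pi_1^{nis}\esc{X}$ are strongly $\mathbb{A}^1$-invariant, and $\pi_n^{nis}\esc{X}$ for $n\geq 2$ are strictly $\mathbb{A}^1$-invariant. Moreover, $\esc{X}$ is the limit of its Postnikov tower, whose layers are principal fibrations classified by maps into $K(\pi_n,n+1)$, or by $\mathrm{B}\pi_1$ for the bottom stage. Birational-local objects are closed under limits and under fibers of maps between local objects. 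It therefore suffices to show that the Eilenberg--MacLane spaces $K(\pi_n,n)$ and $\mathrm{B}\pi_1$ are dense-local, i.e.\ that for a dense open $U\subset X$ the restriction maps
\[
H^i_{nis}(X,\pi_n)\to H^i_{nis}(U,\pi_n)
\]
are isomorphisms for all $i$, together with the corresponding nonabelian $H^0$ and $H^1$ statement for $\pi_1$.

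\textbf{Step 2 (from $\mathbb{P}^1$-locality to unramified and birational sheaves).} We need to transfer $\mathbb{P}^1$-locality of $\esc{X}$ to its homotopy sheaves. Because $\esc{X}$ is $\mathbb{P}^1$-local, $\Omega^n\esc{X}$ is as well, and applying $\pi_0^{nis}$ shows that each $\pi_n$ is $\mathbb{P}^1$-invariant in a strong sense. Locality of $\esc{X}$ against $\mathbb{P}^1_Y\to Y$ says $\mathrm{Map}(Y,\esc{X})\simeq\mathrm{Map}(\mathbb{P}^1_Y,\esc{X})$. Feeding this into the descent spectral sequence, with Morel's vanishing of cohomology of strictly invariant sheaves in degrees above the dimension, should yield the stronger statement $H^i(\mathbb{P}^1_Y,\pi_n)\cong H^i(Y,\pi_n)$. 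Here we use the Postnikov tower inductively, starting from the top-degree terms.

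The key computation is then the following. For a strictly $\mathbb{A}^1$-invariant sheaf $M$, the Gersten/purity description gives
\[
H^1(\mathbb{P}^1_Y,M)\cong H^1(Y,M)\oplus M_{-1}(Y).
\]
So $\mathbb{P}^1$-invariance in degree $1$ forces the contraction $M_{-1}=0$. An analogous statement should hold for strongly invariant $\pi_1$, via Morel's computation of $H^1(\mathbb{P}^1,G)$.

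\textbf{Step 3 (vanishing contraction gives birationality).} A strictly $\mathbb{A}^1$-invariant sheaf with $M_{-1}=0$ is birational. Its Rost--Schmid/Gersten complex on $X$ has $i$-th term
\[
\bigoplus_{x\in X^{(i)}} M_{-i}(k(x)),
\]
and these vanish for $i\geq 1$. Hence $H^i(X,M)=0$ for $i>0$, and $M(X)=M(k(X))$ by unramifiedness, so $M$ is dense-local together with its Eilenberg--MacLane spaces. The same argument, using Morel's Gersten complex for strongly invariant groups, handles $\pi_1$.

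The main obstacle is Step 2. $\mathbb{P}^1$-locality of $\esc{X}$ is a statement about the whole mapping space, and extracting $(\pi_n)_{-1}=0$ for each individual homotopy sheaf requires care. The fiber of $\esc{X}(\mathbb{P}^1_Y)\to\esc{X}(Y)$ along the section mixes the $\pi_n$ with $(\pi_{n+1})_{-1}$, through the $\mathbb{A}^1$-cofiber sequence $\mathbb{A}^1\setminus 0\to\mathbb{A}^1\to\mathbb{P}^1$, i.e.\ $\mathbb{P}^1\simeq S^1\wedge\mathbb{G}_m$. My first attempt would be to use this directly. $\mathbb{P}^1$-locality means the pointed mapping space $\mathrm{Map}_\bullet(\mathbb{P}^1\wedge Y_+,\esc{X})=\Omega\,\underline{\mathrm{Map}}_\bullet(\mathbb{G}_m,\esc{X})(Y)$ is contractible for all $Y$. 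Taking homotopy sheaves then gives $(\pi_{n+1}\esc{X})_{-1}=\pi_n\Omega_{\mathbb{G}_m}\esc{X}=0$ for all $n\geq 0$, using Morel's identification $\pi_n\Omega_{\mathbb{G}_m}=(\pi_n)_{-1}$ for connected $\mathbb{A}^1$-local spaces. This would settle Step 2 cleanly, and Step 3 then finishes the proof.
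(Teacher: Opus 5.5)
\textbf{Assessment.} The paper gives no argument of its own for this statement; it is imported from [\cite{0bat}, Theorem 3.3.11]. Your proposal therefore has to stand on its own, and in outline it does.

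The chain of reasoning is as follows. $\mathbb{P}^1$-locality of the pointed, connected, $\mathbb{A}^1$-local space $\esc{X}$, together with $\mathbb{P}^1\simeq S^1\wedge\mathbb{G}_m$, gives $\Omega\,\Omega_{\mathbb{G}_m}\esc{X}\simeq *$ sectionwise. Morel's contraction theorem [\cite{morel2012a1}, Theorem 6.13] turns this into $(\pi_m^{nis}\esc{X})_{-1}=0$ for all $m\geq 1$. The Rost--Schmid complex (and Morel's theory of strongly invariant groups for $m=1$) then makes every $\pi_m$ birational with vanishing higher Nisnevich cohomology. Finally, the Postnikov tower, which converges since objects of $Sm_k$ have finite Krull dimension, assembles dense-locality.

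The last paragraph of your Step 2 is the argument to keep. Drop the descent-spectral-sequence sketch before it: it would need the Postnikov truncations of $\esc{X}$ to be $\mathbb{P}^1$-local, which is not available a priori. The formula $H^1(\mathbb{P}^1_Y,M)\cong H^1(Y,M)\oplus M_{-1}(Y)$ is correct but not needed.

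\textbf{Three points need repair.}

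\emph{First, justify the contractibility of the pointed mapping space.} We have $\esc{X}(k)\neq\emptyset$, because $\mathrm{Spec}\,k$ is a Nisnevich point and $\pi_0^{nis}\esc{X}=*$. Then $\mathrm{Map}_\bullet(\mathbb{P}^1\wedge Y_+,\esc{X})$ is the fiber over this base point of $\infty^*\colon\esc{X}(\mathbb{P}^1_Y)\to\esc{X}(Y)$. That map is an equivalence because it is a retraction of the equivalence $p^*$.

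\emph{Second, fix the indexing.} Contractibility of $\Omega\,\Omega_{\mathbb{G}_m}\esc{X}$ gives $\pi_{n+1}(\Omega_{\mathbb{G}_m}\esc{X})=0$ for $n\geq 0$. It is this group that Morel identifies with $(\pi_{n+1}\esc{X})_{-1}$. The equality $(\pi_{n+1}\esc{X})_{-1}=\pi_n\Omega_{\mathbb{G}_m}\esc{X}$, as you wrote it, is wrong.

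\emph{Third, and more substantively, the Postnikov layers need not be principal.} They are principal fibrations only when $\pi_1$ acts trivially on $\pi_n$. In general the $k$-invariant is a map $\tau_{\leq n-1}\esc{X}\to K_{\pi_1}(\pi_n,n+1)$ over $\mathrm{B}\pi_1$, landing in a twisted Eilenberg--MacLane space. Then $\tau_{\leq n}\esc{X}$ is its pullback along the canonical section $\mathrm{B}\pi_1\to K_{\pi_1}(\pi_n,n+1)$.

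Closure of dense-local objects under limits and fibers does not formally give dense-locality of this twisted object from that of its fiber and base, so it must be checked directly. It follows from what you already have:
\begin{itemize}
\item Once $G=\pi_1$ is birational, every $G$-torsor on a connected smooth $X$ is trivial. Indeed, on $X_{nis}$ the sheaf $G$ is pushed forward from the generic point, whose Nisnevich $H^1$ vanishes.
\item Hence $\mathrm{Map}(X,K_{G}(M,n+1))\simeq\mathrm{Map}(X,K(M,n+1))_{hG(X)}$.
\item For a dense open $U\subset X$, both $G(X)\to G(U)$ and the equivariant map $\mathrm{Map}(X,K(M,n+1))\to\mathrm{Map}(U,K(M,n+1))$ are equivalences. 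The latter holds because both sides are $K(M(k(X)),n+1)$, as $M$ is birational with no higher cohomology.
\end{itemize}
With these repairs the argument is complete.
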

 The above theorem is not true in general for non-connected spaces. For example, $Shv^{\mathbb{A}^1}_S=\mathrm{Disc}(\mathcal{H}^{\mathbb{A}^1}(S))\subset \mathcal{H}^{\mathbb{A}^1}(S)\bigcap \mathcal{H}^{\mathbb{P}^1}(S)$ (see \Cref{a1 inv nis is p1 inv}) while $Shv^{\mathbb{A}^1}_S\not \subset \mathcal{H}^b(S)$ (for example $\mathbb{G}_m$ is $\mathbb{A}^1$ local discrete space but is clearly not birational local).
 
 Delooping the above theorem yields a similar equivalence, though for `correct' group-like spaces. For this, let us make the following definition, similar to Morel's notion of strong $\mathbb{A}^1$-invariance:
\begin{defn}\label[defn]{strong p1 monoid space}
    A monoid space $\esc{G}\in \mathcal{P}_{nis}(S)$ is said to be strongly $\mathbb{P}^1$-invariant (resp. strongly $\mathbb{A}^1$-invariant) if $\mathrm{B}^i_{nis}G$ is $\mathbb{P}^1$-local (resp. $\mathbb{A}^1$-local) for $i=0, 1$. 
\end{defn}
We denote the category of strongly $\mathbb{P}^1$-invariant monoid spaces as $\mathrm{Mon}_{\mathbb{P}}(\mathcal{H}^{\mathbb{P}^1})$ and, similarly, the category of strongly $\mathbb{A}^1$-invariant monoid spaces as $\mathrm{Mon}_{\mathbb{A}}(\mathcal{H}^{\mathbb{A}^1})$. Since $$\mathcal{H}^b(k)\subset \mathcal{H}^{\mathbb{A}^1}(k)\bigcap\mathcal{H}^{\mathbb{P}^1}(k)$$ and the bar construction of birational local monoids is again birational local [\cite{0bat}, Corollary 3.2.7], it follows that $$\mathrm{Mon}(\mathcal{H}^b(k))\subset \mathrm{Mon}_{\mathbb{P}}(\mathcal{H}^{\mathbb{P}^1}(k))\bigcap \mathrm{Mon}_{\mathbb{A}}(\mathcal{H}^{\mathbb{A}^1}(k)).$$
\begin{thm}\label{strong a1 p1 groups are bir}
Let $k$ be a perfect field. A grouplike monoid space $\esc{G}\in \mathrm{Mon}(\mathcal{P}_{nis}(k))^{gp}$ is birational local if and only if it is strongly $\mathbb{A}^1$-invariant and strongly $\mathbb{P}^1$-invariant. In short $$\mathrm{Mon}(\mathcal{H}^b(k))^{gp}=\mathrm{Mon}_{\mathbb{P}}(\mathcal{H}^{\mathbb{P}^1}(k))^{gp}\bigcap \mathrm{Mon}_{\mathbb{A}}(\mathcal{H}^{\mathbb{A}^1}(k))^{gp}.$$
\end{thm}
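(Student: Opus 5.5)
The plan is to deduce the statement from \Cref{connected a1 p1 is bir} by passing to the classifying space. The inclusion ``$\subset$'' was already noted before the statement: $\mathcal{H}^b(k)\subset \mathcal{H}^{\mathbb{A}^1}(k)\cap\mathcal{H}^{\mathbb{P}^1}(k)$, and by [\cite{0bat}, Corollary 3.2.7] the bar construction of a birational local monoid is again birational local. Hence $\esc{G}$ and $\mathrm{B}_{nis}\esc{G}$ both lie in $\mathrm{MDisc}(k)$, which is exactly strong $\mathbb{A}^1$- and strong $\mathbb{P}^1$-invariance in the sense of \Cref{strong p1 monoid space}. So only the converse needs work.

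For the converse, let $\esc{G}$ be a grouplike Nisnevich-local monoid space that is strongly $\mathbb{A}^1$- and strongly $\mathbb{P}^1$-invariant. By definition (the case $i=1$), $\mathrm{B}_{nis}\esc{G}$ is both $\mathbb{A}^1$-local and $\mathbb{P}^1$-local. It is also Nisnevich local, being the Nisnevich-sheafified bar construction, so it lies in $\mathrm{MDisc}(k)$. Moreover, $\mathrm{B}_{nis}\esc{G}$ is Nisnevich connected, since $\pi_0^{nis}$ of a Nisnevich classifying space is trivial. Thus $\mathrm{B}_{nis}\esc{G}\in\mathrm{MDisc}(k)_{\geq 1}$, and \Cref{connected a1 p1 is bir} gives $\mathrm{B}_{nis}\esc{G}\in\mathcal{H}^b(k)$.

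It remains to recover $\esc{G}$ as a loop object. Because $\esc{G}$ is grouplike, the group completion / recognition theorem in the $\infty$-topos $\mathcal{P}_{nis}(k)$ gives a canonical equivalence $\esc{G}\simeq \Omega\mathrm{B}_{nis}\esc{G}$, with loops taken at the base point. The subcategory $\mathcal{H}^b(k)\subset\mathcal{P}(k)$ is a reflective localization, so it is closed under limits, and in particular under the pullback $*\times_{\mathrm{B}_{nis}\esc{G}}*$. Hence $\esc{G}\in\mathcal{H}^b(k)$, and the monoid structure, being given by limits in $\mathcal{P}_{nis}$, lives in $\mathcal{H}^b(k)$. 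This proves ``$\supset$''.

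The step I expect to need the most care is the appeal to \Cref{connected a1 p1 is bir}, namely checking that $\mathrm{B}_{nis}\esc{G}$ really sits in $\mathcal{P}^{nis}_{\geq 1}$ and is Nisnevich local, so that the theorem applies. If instead one uses a presheaf-level bar construction, it must first be sheafified. One then has to confirm that $\Omega$ of the sheafified classifying space still returns $\esc{G}$; this holds because $\esc{G}$ is already a Nisnevich sheaf and grouplike, so the recognition principle applies in the topos. The hypothesis $i=0$ (that $\esc{G}$ itself is $\mathbb{A}^1$- and $\mathbb{P}^1$-local) is then automatic and is not used in this direction.
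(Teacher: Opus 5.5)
Your proposal is correct and matches the paper's proof in both directions: for the converse you show that $\mathrm{B}_{nis}\esc{G}$ is Nisnevich-connected and lies in $\mathrm{MDisc}(k)$, apply \Cref{connected a1 p1 is bir} to conclude that it is birational local, and then recover $\esc{G}\simeq\Omega\mathrm{B}_{nis}\esc{G}\simeq *\times_{\mathrm{B}_{nis}\esc{G}}*$ using that $\mathcal{H}^b(k)$ is closed under limits. Your side remark is also right: for this direction the $i=0$ hypothesis is redundant, since local objects are closed under limits.
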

\begin{proof}
    By the discussion above, it remains to show that a grouplike strongly $\mathbb{A}^1$-invariant, strongly $\mathbb{P}^1$-invariant monoid space $\esc{M}$ is birational local. Since $\esc{G}$ is grouplike, we have $\esc{G}\simeq \Omega\mathrm{B}_{nis}\esc{G}$, where $\mathrm{B}_{nis}\esc{G}$ is $\mathbb{A}^1$-local and $\mathbb{P}^1$-local. Since $\mathrm{B}_{nis}\esc{G}$ is connected, \Cref{connected a1 p1 is bir} implies that $\mathrm{B}_{nis}\esc{G}$ is birational local. We then have the fiber sequence $$\esc{G}(\simeq \Omega\mathrm{B}_{nis}\esc{G})\to *\to \mathrm{B}_{nis}\esc{G}$$ with base and total space birational local. Since $\mathcal{H}^b\subset \mathcal{P}_{nis}$ is closed under limits, the claim follows.
\end{proof}
In particular, a strongly $\mathbb{A}^1$ invariant sheaf of groups is birational local iff it is strongly $\mathbb{P}^1$ invariant.
To conclude this subsection, let us point out that even though motivic discrete groups are projective motivic spaces [see \Cref{a1 inv nis group is p1 inv}], in general, strongly/strictly $\mathbb{A}^1$ invariant grouplike monoids need not be strongly/strictly $\mathbb{P}^1$ invariant. Here is a counterexample,
\begin{coex}\label[coex]{Gm is not strong p1 local}
    Recall that since the prersheaves $\mathbb{G}_m$ and $\mathrm{Pic}$ are both $\mathbb{A}^1$ invariant, the classifying space $\mathrm{B}_{nis}\mathbb{G}_m$ is an $\mathbb{A}^1$ local space. However, $\mathrm{Pic} $ is not $\mathbb{P}^1$ invariant. In fact, $$\mathrm{Pic}(\mathbb{P}^1_S)\simeq \mathrm{Pic}(S)\oplus \mathrm{Pic}(S).$$ Therefore $\mathrm{B}_{nis}\mathbb{G}_m$ is not $\mathbb{P}^1$ local. 
\end{coex}
Though the classifying space of $\mathbb{G}_m$ can still be identified with the stack of global sections of line bundles:
\begin{thm}\label{htype of BGm}
    The canonical forgetful map $$[\mathbb{A}^1/\mathbb{G}_m]\to \mathrm{B}_{nis}\mathbb{G}_m$$ is a projective motivic equivalence. 
\end{thm}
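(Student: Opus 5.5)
The plan is to show that the forgetful map $p:[\mathbb{A}^1/\mathbb{G}_m]\to \mathrm{B}_{nis}\mathbb{G}_m$ has a section which is inverse to it up to a ``$\mathbb{P}^1$-homotopy'', and then to check that $\mathbb{P}^1$-homotopic maps become equal after $L_{pmot}$.

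\emph{Step 1: moduli description.} By Hilbert 90, $\mathbb{G}_m$-torsors are Zariski locally trivial. Hence the Nisnevich quotient $[\mathbb{A}^1/\mathbb{G}_m]$ is the sheaf of groupoids $X\mapsto\{(L,s)\}$, where $L$ is a line bundle on $X$ and $s\in\Gamma(X,L)$. Likewise $\mathrm{B}_{nis}\mathbb{G}_m(X)$ is the groupoid of line bundles on $X$, and $p$ forgets the section $s$. The zero section gives a map $z:\mathrm{B}_{nis}\mathbb{G}_m\to[\mathbb{A}^1/\mathbb{G}_m]$, $L\mapsto(L,0)$. The composite $p\circ z$ is the identity on the nose, so it remains to show that $z\circ p$ is a projective motivic equivalence, indeed equivalent to the identity after $L_{pmot}$.

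\emph{Step 2: the $\mathbb{P}^1$-homotopy.} Fix homogeneous coordinates $x_0,x_1$ on $\mathbb{P}^1$, so that $x_0\in\Gamma(\mathbb{P}^1,\mathcal{O}(1))$ vanishes exactly at $0$. Fix once and for all trivializations of $\mathcal{O}(1)$ at the points $0$ and $\infty$. Define
\[
H:[\mathbb{A}^1/\mathbb{G}_m]\times\mathbb{P}^1\to[\mathbb{A}^1/\mathbb{G}_m],\qquad (L,s)\mapsto \bigl(\mathrm{pr}_1^*L\otimes\mathrm{pr}_2^*\mathcal{O}(1),\ s\otimes x_0\bigr).
\]
This is functorial in $X$ and compatible with isomorphisms of pairs $(L,s)$, because the construction is a tensor operation with a fixed pair $(\mathcal{O}(1),x_0)$. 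Restricting along $\infty$ and using the fixed trivialization gives $(L,s)$, i.e. $H\circ i_\infty\simeq\mathrm{id}$. Restricting along $0$ gives $(L,0)$, i.e. $H\circ i_0\simeq z\circ p$.

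\emph{Step 3: $\mathbb{P}^1$-homotopic maps agree after localization.} For any space $\esc{Y}$, the projection $q:\esc{Y}\times\mathbb{P}^1\to\esc{Y}$ is a projective motivic equivalence. To see this, write $\esc{Y}$ as a colimit of representables $U$, where the statement holds by definition, and use universality of colimits in $\mathcal{P}(S)$ (equivalently, that $L_{\mathbb{P}^1}$ is locally cartesian, \Cref{model for P1 localization}). Both $i_0$ and $i_\infty$ are sections of $q$, so both are inverse to $L_{pmot}(q)$, and hence $L_{pmot}(i_0)\simeq L_{pmot}(i_\infty)$. Applying $L_{pmot}$ to Step 2 then yields $L_{pmot}(z\circ p)\simeq\mathrm{id}$. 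Together with $p\circ z=\mathrm{id}$, this shows that $L_{pmot}(p)$ is an equivalence with inverse $L_{pmot}(z)$.

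The main obstacle I expect is the coherence in Step 2. One must check that $H$ is an honest morphism of Nisnevich sheaves of groupoids (a functor, not merely a map on objects), and that the identifications $H\circ i_\infty\simeq\mathrm{id}$ and $H\circ i_0\simeq z\circ p$ are natural isomorphisms of functors. I would handle this by working with the strict $1$-categorical stack model, where all the data are explicit. Fixing the trivializations of $\mathcal{O}(1)$ at $0$ and $\infty$ globally, as above, ensures these natural isomorphisms are canonical.
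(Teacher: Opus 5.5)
Your proposal is correct and follows the paper's proof: the zero section splits the forgetful map, and your homotopy, given by tensoring with the pulled-back pair $(\mathcal{O}(1),x_0)$, is exactly the paper's $\{(\mathcal{L},s),(\mathcal{K},s_0,s_\infty)\}\mapsto(\mathcal{L}\otimes\mathcal{K},s\otimes s_\infty)$ once $\mathbb{P}^1$ is viewed as the moduli of a line bundle with two generating sections. You also make explicit two points the paper leaves implicit: that $\mathbb{P}^1$-homotopic maps agree after $L_{pmot}$, and that the homotopy must be defined on the stack of line bundles with a section rather than on the presheaf bar construction, which contains only trivial line bundles and so has no room for $f^*\mathcal{O}(1)$.
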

\begin{proof}
We will construct a $\mathbb{P}^1$-homotopy, in fact, at the presheaf level. We recognize the quotient stack as the two-sided bar construction $\mathrm{B}^{}(\mathbb{A}^1,\mathbb{G}_m,*)$, and similarly, the classifying space $\mathrm{B}^{nis}\mathbb{G}_m$ as $\mathrm{B}^{nis}(*,\mathbb{G}_m,*)$. The projection and section $*\xrightarrow{0} \mathbb{A}^1\xrightarrow{p} *$ thus provide a splitting $$\mathrm{B}^{}(*,\mathbb{G}_m,*)\xrightarrow{b_0}\mathrm{B}^{}(\mathbb{A}^1,\mathbb{G}_m,*)\xrightarrow{b_p} \mathrm{B}^{}(*,\mathbb{G}_m,*).$$ It remains to construct a $\mathbb{P}^1$-homotopy from the other composite, namely, the trivial endomorphism $b_0\circ b_p$ of $[\mathbb{A}^1/\mathbb{G}_m]$ given by $(\mathcal{L},s)\mapsto (\mathcal{L},0)$, to its identity. Define, $$H:[\mathbb{A}^1/\mathbb{G}_m]\times \mathbb{P}^1\to [\mathbb{A}^1/\mathbb{G}_m]$$ by 
    $$\{(\mathcal{L},s),(\mathcal{K}, s_0,s_\infty)\}\mapsto (\mathcal{L}\otimes\mathcal{K}, s\otimes s_\infty)$$ It is evident that $H(-,0)=b_0\circ b_p$ whereas $H(-, \infty)=id$.
\end{proof}
\begin{rem}
    Note that this is also a motivic equivalence. The proof follows rather formally because the Bar construction preserves motivic equivalences and $\mathbb{A}^1\to *$ is a motivic equivalence. In fact, this motivic equivalence restricts to a motivic equivalence of their $1$-skeleton, namely $T=\mathbb{A}^1/\mathbb{G}_m\to \Sigma \mathbb{G}_m$. This new space $T$ plays the fundamental role of a sphere in affine motivic homotopy theory and is well known to be identified with $\mathbb{P}^1$. $\mathbb{P}^1$ locally these observations fail. 
\end{rem}

\begin{prop}
    There are canonical projective motivic equivalence of spaces $\mathbb{A}^1/\mathbb{G}_m\simeq \Sigma \mathbb{A}^1$.
\end{prop}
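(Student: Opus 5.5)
The plan is to realize both sides as cofibers in $\mathcal{P}(S)$ and to compare them using the Zariski cover of $\mathbb{P}^1_S$ together with the fact that $\mathbb{P}^1_S\to S$ is a projective motivic equivalence. Throughout, $\mathbb{A}^1/\mathbb{G}_m$ means the cofiber of the inclusion $j\colon\mathbb{G}_m\hookrightarrow \mathbb{A}^1$, that is, the pushout $*\leftarrow \mathbb{G}_m\to \mathbb{A}^1$. Likewise $\Sigma\mathbb{A}^1$ means the pushout $*\leftarrow \mathbb{A}^1\to *$, which is the cofiber of $\mathbb{A}^1\to *$.

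\textbf{Step 1: $\mathbb{A}^1/\mathbb{G}_m$ as a cofiber of $\mathbb{P}^1$.} Use the standard Zariski square from \Cref{a1 inv nis is p1 inv}, with $\mathbb{G}_m=\mathbb{A}^1\cap\mathbb{A}^1\subset\mathbb{P}^1$. It is a Nisnevich square, so after applying $L_{nis}$ we get a pushout
\[
L_{nis}\bigl(\mathbb{A}^1\sqcup_{\mathbb{G}_m}\mathbb{A}^1\bigr)\simeq \mathbb{P}^1 .
\]
Now collapse the copy of $\mathbb{A}^1$ embedded via $i_\infty$. Pasting pushout squares, and using that $L_{nis}$ preserves colimits, gives the Nisnevich-local equivalences
\[
\mathbb{P}^1\sqcup_{\mathbb{A}^1}*\;\simeq\;\bigl(\mathbb{A}^1\sqcup_{\mathbb{G}_m}\mathbb{A}^1\bigr)\sqcup_{\mathbb{A}^1}*\;\simeq\;\mathbb{A}^1\sqcup_{\mathbb{G}_m}*=\mathbb{A}^1/\mathbb{G}_m .
\]
Here the remaining copy of $\mathbb{A}^1$ is attached to $\mathbb{G}_m$ along $j_1$, and the collapsed copy is the one containing $\infty$.

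\textbf{Step 2: contract $\mathbb{P}^1$.} The projection $\mathbb{P}^1_S\to S$ lies in the generating set $\mathbb{P}(S)$ (take $X=S$), so it is a projective motivic equivalence. Projective motivic equivalences form a strongly saturated class, and $L_{pmot}$ is a left adjoint. Hence the map of pushout diagrams
\[
(\mathbb{P}^1\leftarrow \mathbb{A}^1\to *)\longrightarrow (*\leftarrow \mathbb{A}^1\to *),
\]
which is the projection on the first entry and the identity on the other two, induces a projective motivic equivalence
\[
\mathbb{P}^1\sqcup_{\mathbb{A}^1}*\xrightarrow{\ \sim\ } *\sqcup_{\mathbb{A}^1}*=\Sigma\mathbb{A}^1 .
\]
Composing this with Step 1 gives the claimed equivalence $\mathbb{A}^1/\mathbb{G}_m\simeq\Sigma\mathbb{A}^1$. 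Since each step is built from canonical maps of diagrams, the equivalence is canonical. If one wants a pointed statement, base the spaces at $1\in\mathbb{G}_m$, or equivalently at the cone point; all the maps above respect this base point.

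\textbf{Main obstacle.} The delicate point is Step 1: checking that the pasted pushouts really identify $\mathbb{P}^1/i_\infty(\mathbb{A}^1)$ with $\mathbb{A}^1/\mathbb{G}_m$, and not with the cofiber of the other inclusion. This comes down to keeping track of which leg of the square is collapsed. The identification holds only after Nisnevich sheafification, which is harmless because we work in $\mathcal{H}^{\mathbb{P}^1}(S)\subset\mathcal{P}_{nis}(S)$. Step 2 is purely formal.
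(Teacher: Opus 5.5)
Your proposal is correct and follows the paper's own proof. The paper likewise uses the cocartesian Zariski square to identify $L_{nis}(\mathbb{A}^1/\mathbb{G}_m)\simeq L_{nis}(\mathbb{P}^1/\mathbb{A}^1)$, and then uses $L_{pmot}\mathbb{P}^1\simeq *$ together with the fact that $L_{pmot}$ preserves pushouts to reach $*\sqcup_{\mathbb{A}^1}*=\Sigma\mathbb{A}^1$; your pasting of pushout squares and the explicit map of pushout diagrams simply spell out what the paper compresses into one line.
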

\begin{proof}
    This follows from considering the standard Zariski cover of $\mathbb{P}^1$ by two copies of the affine line, twisted by the punctured line:    \[
    \xymatrix{
    \mathbb{A}^1\setminus 0\ar@{^(->}[r]\ar@{^(->}[d]& \mathbb{A}^1\ar@{^(->}[d]\\
    \mathbb{A}^1\ar@{^(->}[r]&\mathbb{P}^1
    }
    \]
    Because this is a Nisnevich cd square, it is cocartesian. Thus, we have an equivalence of Nisnevich cofibers: $$L_{nis}(\mathbb{A}^1/\mathbb{A}^1\setminus 0)\simeq L_{nis}(\mathbb{P}^1/\mathbb{A}^1).$$ Applying the left adjoint $L_{pmot}$, we obtain the desired equivalence, since $L_{pmot}\mathbb{P}^1\simeq *$.
\end{proof}
\begin{defn}[Thom space of a bundle]
    For a vector bundle $p: E\to X$, the projective Thom space $\mathbb{P}\mathrm{Th}(p)$ is defined as $$\mathbb{P}\mathrm{Th}(p):=L_{pmot} E/(E\setminus 0_X),$$ where $0_X$ denotes the zero section of $p$. 
\end{defn}
\begin{cor}
    For any trivial vector bundle $E$ of rank $n$ over $X/S$, one has $$\mathbb{P}\mathrm{Th}(E/X)\simeq L_{pmot}\Sigma^n(\mathbb{A}^{\wedge n}\wedge X_+).$$
\end{cor}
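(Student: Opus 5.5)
Since $E$ is trivial of rank $n$, we may identify $E=\mathbb{A}^n_X=\mathbb{A}^n\times X$ with zero section $0\times X$. The pair $(E,E\setminus 0_X)$ is then the product of the pair $(\mathbb{A}^n,\mathbb{A}^n\setminus 0)$ with $X$. Hence $E/(E\setminus 0_X)\simeq (\mathbb{A}^n/(\mathbb{A}^n\setminus 0))\wedge X_+$ in pointed presheaves, because quotients by subspaces commute with $-\times X$ followed by collapsing. Moreover $L_{pmot}$ is a left adjoint and compatible with smash products in the sense that $L_{pmot}(A\wedge B)\simeq L_{pmot}(L_{pmot}A\wedge B)$; this uses that $\mathbb{P}^1$-equivalences and Nisnevich equivalences are stable under products with a space, as \Cref{model for P1 localization} says $L_{\mathbb{P}^1}$ is cartesian. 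So it suffices to identify $L_{pmot}(\mathbb{A}^n/(\mathbb{A}^n\setminus 0))\simeq L_{pmot}\Sigma^n\mathbb{A}^{\wedge n}$.

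The plan is to proceed by induction on $n$, in the spirit of the preceding proposition. For $n=1$, that proposition gives $L_{pmot}(\mathbb{A}^1/\mathbb{G}_m)\simeq L_{pmot}(\mathbb{P}^1/\mathbb{A}^1)$. Since $L_{pmot}\mathbb{P}^1\simeq *$, the cofiber sequence $\mathbb{A}^1\to\mathbb{P}^1\to\mathbb{P}^1/\mathbb{A}^1$ becomes, after $L_{pmot}$, a cofiber sequence $\mathbb{A}^1\to *\to \Sigma\mathbb{A}^1$. This yields the case $n=1$, with $\mathbb{A}^1$ pointed at $1$ or $0$ as appropriate.

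For the inductive step I will use the standard identity for pairs
\[(\mathbb{A}^n,\mathbb{A}^n\setminus 0)\cong (\mathbb{A}^1,\mathbb{G}_m)\times(\mathbb{A}^{n-1},\mathbb{A}^{n-1}\setminus 0),\]
that is, $\mathbb{A}^n\setminus 0=(\mathbb{G}_m\times\mathbb{A}^{n-1})\cup(\mathbb{A}^1\times(\mathbb{A}^{n-1}\setminus 0))$. This is a Zariski cover, hence a Nisnevich pushout. It gives $L_{nis}\bigl(\mathbb{A}^n/(\mathbb{A}^n\setminus 0)\bigr)\simeq L_{nis}\bigl(\mathbb{A}^1/\mathbb{G}_m\wedge \mathbb{A}^{n-1}/(\mathbb{A}^{n-1}\setminus 0)\bigr)$, which is the usual smash-of-quotients formula $(A/A')\wedge(B/B')\simeq (A\times B)/(A'\times B\cup A\times B')$. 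Applying $L_{pmot}$ and the smash compatibility from the first paragraph, the induction hypothesis and the $n=1$ case give
\[L_{pmot}\bigl(\Sigma\mathbb{A}^1\wedge\Sigma^{n-1}\mathbb{A}^{\wedge(n-1)}\bigr)\simeq L_{pmot}\Sigma^n\mathbb{A}^{\wedge n}.\]
Smashing with $X_+$ then finishes the proof.

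The main obstacle is the compatibility of $L_{pmot}$ with smash products, i.e.\ that $A\to A'$ being a projective motivic equivalence implies $A\wedge B\to A'\wedge B$ is one. I would argue this via the strongly saturated class: the class of maps $f$ with $f\times B$ a projective motivic equivalence is strongly saturated. It contains the generators $\mathbb{P}^1_Y\times B\to Y\times B$, since for representable $B$ these are again generators and general $B$ is a colimit of representables, with products in presheaves commuting with colimits. It also contains Nisnevich equivalences, by universality of colimits in the topos. Passing from products to smash products then uses the cofiber description $A\wedge B=(A\times B)/(A\vee B)$ together with preservation of pushouts.
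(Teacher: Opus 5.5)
Your argument is correct and follows the paper's route. The paper cites Morel--Voevodsky for the Zariski-local splitting $\mathrm{Th}(E/X)\simeq X_+\wedge T^{\wedge n}$ with $T=\mathbb{A}^1/\mathbb{G}_m$, then applies the monoidal functor $L_{pmot}$ together with the preceding proposition $T\simeq\Sigma\mathbb{A}^1$; your induction over the Zariski cover of $\mathbb{A}^n\setminus 0$ proves that cited splitting directly, and your strongly-saturated-class argument justifies the monoidality of $L_{pmot}$, which the paper uses without proof.
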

\begin{proof}
    By [\cite{morel19991} Proposition 2.17 (2)], $$\mathrm{Th}(E/X)\simeq^{zar} X_+\wedge T^{\wedge n}.$$ Applying the monoidal functor $L_{pmot}$ and using the above proposition yields the desired result. 
\end{proof}
By Zariski excision, the projective Thom space of a vector bundle $q: V\to X$ is simply the $\mathbb{P}^1$-localization of the Thom space of the projectivization $$\mathbb{P}(V):=\mathbf{P}(V\oplus \mathcal{O}_X)\to X.$$ Thus, $$\mathbb{P}\mathrm{Th}(q)\simeq  \mathbf{P}roj(V\oplus \mathcal{O}_X)/\big(\mathbf{P}roj(V\oplus \mathcal{O}_X)\setminus X\big)=\mathbb{P}(V)/\mathbb{P}(V)\setminus X.$$ However, it is not clear what the correct $\mathbb{P}^1$-local version of the purity isomorphism should be.

\subsection{Limits and Colimits}
In this small sunbsection we present some formal limit colimit properties of the projective motivic homotopy category. These are true for localization at any smooth scheme. 
\begin{cor}\label[cor]{pmot cartesian localization}
    $L_{pmot}$ is cartesian and locally cartesian.
\end{cor}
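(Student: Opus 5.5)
The plan is to derive the corollary from \Cref{model for P1 localization} together with the colimit formula of \Cref{model for pmot localization}. Recall that a localization $L$ is cartesian if it preserves finite products. It is locally cartesian if, in addition, it preserves pullbacks of the form $\esc{X}\times_{\esc{Z}}\esc{Y}$ in which $\esc{Z}$ (or the relevant base) is already local; equivalently, $L$ commutes with base change along maps into local objects. By \Cref{model for P1 localization}, $L_{\mathbb{P}^1}$ has both properties. $L_{nis}$ is a left exact localization, so it preserves all finite limits and in particular both classes of limits in question.

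First, cartesianness. For spaces $\esc{X},\esc{Y}$ we have
\[
L_{pmot}(\esc{X}\times\esc{Y})\simeq \colim_n (L_{nis}L_{\mathbb{P}^1})^n(\esc{X}\times\esc{Y}).
\]
Each stage commutes with binary products, because $L_{\mathbb{P}^1}$ is cartesian and $L_{nis}$ is left exact. Hence the $n$-th term is $(L_{nis}L_{\mathbb{P}^1})^n\esc{X}\times (L_{nis}L_{\mathbb{P}^1})^n\esc{Y}$. Filtered colimits commute with finite products in the $\infty$-topos $\mathcal{P}(S)$, and the indexing category $\mathbb{N}$ is sifted, so the colimit of these products is the product of the colimits. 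This gives $L_{pmot}(\esc{X}\times\esc{Y})\simeq L_{pmot}\esc{X}\times L_{pmot}\esc{Y}$. Terminal objects are preserved because $*$ is already projective motivic.

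Second, local cartesianness. Take a pullback $\esc{X}\times_{\esc{Z}}\esc{Y}$ with $\esc{Z}\in\mathcal{H}^{\mathbb{P}^1}(S)$. Then $\esc{Z}$ is simultaneously $\mathbb{P}^1$-local and Nisnevich-local, so every iterate $(L_{nis}L_{\mathbb{P}^1})^n$ fixes $\esc{Z}$. Applying the locally cartesian property of $L_{\mathbb{P}^1}$ over the $\mathbb{P}^1$-local base $\esc{Z}$, and the left exactness of $L_{nis}$, we get by induction on $n$ that
\[
(L_{nis}L_{\mathbb{P}^1})^n(\esc{X}\times_{\esc{Z}}\esc{Y})\simeq (L_{nis}L_{\mathbb{P}^1})^n\esc{X}\times_{\esc{Z}}(L_{nis}L_{\mathbb{P}^1})^n\esc{Y}.
\]
One then passes to the colimit over $n$. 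Filtered colimits commute with pullbacks in an $\infty$-topos, and the constant diagram at $\esc{Z}$ has colimit $\esc{Z}$. This yields $L_{pmot}(\esc{X}\times_{\esc{Z}}\esc{Y})\simeq L_{pmot}\esc{X}\times_{\esc{Z}}L_{pmot}\esc{Y}$.

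The main obstacle is making the induction honest. At the intermediate stages the objects $(L_{nis}L_{\mathbb{P}^1})^n\esc{X}$ are not local, so one must use the hypothesis in the form given by Hoyois's criterion: a localization generated by a pullback-stable class of maps, here $\mathcal{A}$, is locally cartesian as soon as the class is stable under base change. This base-change stability is exactly what \Cref{model for P1 localization} provides. One also needs the compatibility maps at each stage to be the canonical ones, so that they assemble into a map of $\mathbb{N}$-indexed diagrams. If this is delicate, the fallback is a direct argument. The generating set $\mathbb{P}(S)$ together with the Nisnevich covering sieves is stable under pullback along maps from representables. Then [\cite{hoyois2017six}, Proposition 3.4] applied to the combined class (Nisnevich hypercovers together with $\mathcal{A}$) gives local cartesianness of $L_{pmot}$ directly. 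Cartesianness then follows as the special case $\esc{Z}=*$.
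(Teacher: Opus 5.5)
Your argument is correct and is the same as the paper's. You check that each stage $(L_{nis}L_{\mathbb{P}^1})^n$ is compatible with finite products and with pullbacks over a projective motivic base $\esc{Z}$, and then you commute the sequential colimit of \Cref{model for pmot localization} with these finite limits in $\mathcal{P}(S)$.

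The worry in your last paragraph is unfounded. Local cartesianness of $L_{\mathbb{P}^1}$ only requires the base $\esc{Z}$ to be local, not the factors, and $\esc{Z}$ stays fixed and $\mathbb{P}^1$-local at every stage. So your induction is already honest, and the fallback through Hoyois's colimit formula (which is not suited to Nisnevich covers anyway) is not needed.
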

\begin{proof}
Recall that $L_{nis}$ is left exact and that $L_{\mathbb{P}^1}$ is cartesian and locally cartesian \Cref{model for P1 localization}. Since being cartesian is stable under filtered colimits, the cartesianness of $L_{pmot}$ follows from the formulae for $L_{pmot}$ given above. Since both functors are locally cartesian and $\mathcal{P}(S)$ has universal colimits, the local cartesianness follows as well from the same model above and the fact that pullbacks are stable under filtered colimits.
\end{proof}
\begin{lem}
    $\mathcal{P}_{zar}(S)$ is generated under sifted colimits by $X\in Sm_S$ such that $X$ is affine. 
\end{lem}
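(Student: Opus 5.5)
We need to prove that $\mathcal{P}_{zar}(S)$ is generated under sifted colimits by the affine $X\in Sm_S$. The plan has two stages. First, the representables generate $\mathcal{P}_{zar}(S)$ under sifted colimits together with finite coproducts. Second, every smooth $X$ is built from affines by finite colimits that Zariski sheafification turns into iterated pushouts, and these are then rewritten as sifted colimits.

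\textbf{Step 1: reduction to representables.} $\mathcal{P}(S)$ is generated under colimits by the representables $Y\in Sm_S$. Every colimit is a sifted colimit of finite coproducts; concretely, every presheaf is the geometric realization of a simplicial object whose terms are coproducts of representables. $L_{zar}$ is a left adjoint, and in $\mathcal{P}_{zar}(S)$ finite coproducts of representables are represented by disjoint unions of schemes, since the empty scheme goes to the initial object and $L_{zar}(Y\sqcup Y')\simeq Y\sqcup Y'$. Hence $\mathcal{P}_{zar}(S)$ is generated under sifted colimits by (sheafified) representables $Y\in Sm_S$. It therefore suffices to show that each such $Y$ lies in the full subcategory $\mathcal{G}\subset\mathcal{P}_{zar}(S)$ of objects obtained from affine smooth schemes by iterated sifted colimits.

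\textbf{Step 2: gluing along Zariski squares.} $Y$ is quasicompact, since smooth morphisms are assumed quasicompact and $S$ is qcqs. Induct on the number $n$ of affine opens $U_1,\dots,U_n$ covering $Y$. Put $V=U_2\cup\dots\cup U_n$. The Zariski square $U_1\cap V\to U_1,\ V\to Y$ becomes a pushout in $\mathcal{P}_{zar}(S)$. The scheme $V$ is covered by $n-1$ affines, and $U_1\cap V=\bigcup_{i\ge 2}(U_1\cap U_i)$. Each $U_1\cap U_i$ is affine when $S$ is separated; in general it is only quasi-compact and separated, so the induction must be run on quasi-compact separated schemes first and then on arbitrary quasicompact ones. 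With this two-stage induction on covering number, every $Y$ is an iterated pushout of affines.

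\textbf{Step 3: rewriting pushouts as sifted colimits.} A pushout $B\leftarrow A\to C$ is the geometric realization of the bar construction $[n]\mapsto B\sqcup A^{\sqcup n}\sqcup C$, which is a sifted colimit of finite coproducts. The finite coproducts that appear are coproducts of objects already built, but they are not themselves affine schemes a priori. For example, $A\sqcup A$ with $A$ affine is again an affine scheme, yet for general objects of $\mathcal{G}$ one must check that $\mathcal{G}$ is closed under finite coproducts. This follows because coproducts commute with sifted colimits in each variable, and finite disjoint unions of affine schemes are affine; hence $\mathcal{G}$ is closed under finite coproducts, and then under pushouts by the bar construction. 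Together with Steps 1 and 2, this shows every $Y\in Sm_S$ lies in $\mathcal{G}$, so $\mathcal{G}=\mathcal{P}_{zar}(S)$.

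\textbf{Main obstacle.} The delicate point is the non-separated case in Step 2, where intersections of affines are not affine. The plan is to bootstrap through quasi-compact separated schemes as described there, using that they are covered by affines with affine intersections.
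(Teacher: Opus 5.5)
Your proof is correct, but it takes a different route from the paper's. Your Step~1 reduction to representables is the same as the paper's, which simply cites that $\mathcal{P}_{\Sigma}(S)$ is the sifted-colimit completion of $Sm_S$.

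After that the paper resolves each $X\in Sm_S$ directly by an affine Zariski cover. Implicitly, $X$ is the geometric realization of the \v{C}ech nerve of a finite affine cover. The terms are disjoint unions of intersections of affine opens, which are open in an affine and hence separated, so $X$ lies in the sifted closure of separated schemes. For a separated scheme the same \v{C}ech terms are affine. Because a \v{C}ech nerve is already a simplicial diagram, the paper needs no pushouts, no bar construction, and no lemma that the sifted closure $\mathcal{G}$ is stable under finite coproducts. It only tacitly uses that the cover is finite, so that each \v{C}ech term is a single quasi-compact scheme in $Sm_S$.

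Your route uses only the fact that Zariski squares are pushouts in $\mathcal{P}_{zar}(S)$, together with an induction on the number of affines. You pay for this with Step~3, which combines the bar construction with closure of $\mathcal{G}$ under coproducts because $\sqcup$ commutes with sifted colimits in each variable. In return you get a stronger by-product: $\mathcal{G}$ is closed under all finite, hence all small, colimits. Your induction also makes explicit where quasi-compactness and quasi-separatedness of $Y$ are used, namely to keep $U_1\cap V$ in $Sm_S$. It needs only the pairwise intersections $U_1\cap V$ rather than all higher intersections.

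The two-stage bootstrap through separated schemes is common to both arguments.
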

\begin{proof}
        Since $\mathcal{P}_{zar}(S)\subset \mathcal{P}_{\Sigma}(S)$ and $\mathcal{P}_{\Sigma}(S)$ is the sifted colimit completion of $Sm_S$, the Zariski topos is generated under sifted colimits by $X\in Sm_S$. But a Zariski cover of $X$ by affine opens is such that every pairwise intersection is a separated scheme. Thus $\mathcal{P}_{zar}$ is generated under sifted colimits by separated schemes over $S$. But separated schemes, by definition, have Zariski covers by affines such that every pairwise intersection is an affine scheme. So we conclude that $\mathcal{P}_{zar}(S)$ is generated under sifted colimits by affine schemes smooth over $S$.
\end{proof}
The next result is also quite standard, except for its last statement:
\begin{prop}
    $\mathcal{H}^{\mathbb{P}^1}(S)$ is generated under sifted colimits by the projective motivic localizations of smooth affine schemes over $S$ (over $\mathbb{Z}$), i.e., by the set $$\{L_{pmoot}(X\xrightarrow[]{smooth} S)\}_{X=Spec A}.$$ If $S$ is semi-separated (e.g., separated), then these Nisnevich local generators are already $\mathbb{P}^1$-local.
\end{prop}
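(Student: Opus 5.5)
The proof has two parts: a formal transfer of generators along the localization $L_{pmot}$, followed by the observation that smooth affine schemes are already $\mathbb{P}^1$-local.

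\textbf{Generators.} The Nisnevich topos $\mathcal{P}_{nis}(S)$ is a localization of $\mathcal{P}_{zar}(S)$, and the sheafification functor $L_{nis}$ preserves colimits. By the preceding lemma, $\mathcal{P}_{zar}(S)$ is generated under sifted colimits by smooth affine $X=Spec A$ over $S$. Applying $L_{nis}$, every object of $\mathcal{P}_{nis}(S)$ is the colimit in $\mathcal{P}_{nis}(S)$ of a sifted diagram of sheafified smooth affines; since schemes are Nisnevich sheaves, these are just the affines themselves.

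Next use that $\mathcal{H}^{\mathbb{P}^1}(S)\subset\mathcal{P}_{nis}(S)$ is an accessible localization with left adjoint $L_{pmot}$ (by \Cref{model for pmot localization} and the discussion following the main definition). For $\esc{X}\in\mathcal{H}^{\mathbb{P}^1}(S)$ we have $\esc{X}\simeq L_{pmot}\esc{X}$. Write $\esc{X}\simeq\colim_{i} X_i$ in $\mathcal{P}_{nis}(S)$ with $X_i$ smooth affine and the diagram sifted. Then
\[
\esc{X}\simeq L_{pmot}\colim_i X_i\simeq \colim_i^{\mathcal{H}^{\mathbb{P}^1}} L_{pmot}X_i ,
\]
because a left adjoint preserves colimits and colimits in a reflective subcategory are computed by localizing the ambient colimit. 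This proves the first statement. The one point to check is that generation under sifted colimits is meant in $\mathcal{H}^{\mathbb{P}^1}(S)$, with colimits computed there; I would state this explicitly.

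\textbf{Affines are already local.} Suppose $S$ is semi-separated, and let $X=Spec A\to S$ be smooth. Then $X\to S$ is an affine morphism, so $X\simeq \underline{Spec}_S\mathcal{E}$ for a quasi-coherent $\mathcal{O}_S$-algebra $\mathcal{E}$. \Cref{affine algebras are projective local} then shows $X$ is $\mathbb{P}^1$-local as a presheaf on $Sm_S$. The key input is $\Gamma(\mathbb{P}^1_Y,\mathcal{O})=\Gamma(Y,\mathcal{O})$, which, applied relatively over $S$, identifies $\mathcal{O}_S$-algebra maps $\mathcal{E}\to q_*\mathcal{O}_{\mathbb{P}^1_Y}$ with maps $\mathcal{E}\to \mathcal{O}_Y$ pushed forward to $S$.

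Representables are Nisnevich sheaves, so $X$ is a projective motivic space and $L_{pmot}X\simeq X$. Thus the generators are the smooth affine schemes themselves.

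\textbf{Main obstacle.} I expect the delicate step to be the affineness of $X\to S$ for an absolute affine $X$. This requires semi-separatedness of $S$: the graph of $X\to S$ factors as $X\to X\times_{\mathbb{Z}} S\to S$, and one must check that affine-to-semi-separated morphisms are affine, that is, that intersections of affines in $S$ are affine. I would cite the standard Stacks Project lemma for this, namely that a morphism from an affine scheme to a scheme with affine diagonal is affine.
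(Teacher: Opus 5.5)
Your proposal is correct and is essentially the paper's own proof. The paper gets the first statement from the preceding lemma, using that $\mathcal{H}^{\mathbb{P}^1}(S)$ is an accessible localization of $\mathcal{P}_{zar}(S)$, and the second from \Cref{affine algebras are projective local} together with the fact that an affine scheme over a semi-separated base is an affine morphism; you do the same, and additionally make explicit how colimits in the reflective subcategory are computed, which the paper leaves implicit.
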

\begin{proof}
Since the Nisnevich topos, and hence $\mathcal{H}^{\mathbb{P}^1}$, is an accessible localization of $\mathcal{P}_{zar}(S)$, the first statement follows from the lemma above. The second statement follows from Example \Cref{affine algebras are projective local}.  
\end{proof}

\section{Schematic functoriality of \texorpdfstring{ $\mathbb{P}^1$}{{P}1}-motives}
In this section, we discuss the standard functorial behavior of the assignment $$S\mapsto \mathcal{H}^{\mathbb{P}^1}(S).$$ The main point of this short section is to show that this assignment fails to satisfy the six-functor localization sequence.
\subsection{Unstable functors}

It is evident that the $\mathbb{P}^1$-motivic homotopy construction enjoys the unstable functorialities along schematic morphisms. More precisely, for a morphism of schemes $f: S\to T$ the induced functor $f^*: \mathcal{P}(T)\to \mathcal{P}(S)$ preserves $\mathbb{P}^1$-projections and nisnevich squares. It therefore descends to a morphism in $Pr^L$ given by the adjunction $$L_{pmot}f^*: \mathcal{H}^{\mathbb{P}^1}(T)\leftrightarrows \mathcal{H}^{\mathbb{P}^1}(S): f_*.$$ When $p: S\to R$ is smooth the canonical map $Sm_S\to Sm_R$  given by post composition with $p$ induces another adjunction $$L_{pmot}p_\sharp : \mathcal{ H}^{\mathbb{P}^1}(S)\leftrightarrows \mathcal{H}^{\mathbb{P}^1}(R): p^*,$$ in which case $L_{pmot}p^*\simeq p^*$. 
\begin{rem}
    One of the key ingredients of the six-functor formalism for the $\mathbb{A}^1$-motivic homotopy construction is that the pushforward along a closed immersion preserves $\mathbb{A}^1$-motivic equivalences of contractible presheaves (i.e., presheaves that send the empty scheme to a contractible space). In fact, pushforward along any morphism of schemes preserves $\mathbb{A}^1$-homotopy equivalences, while the additional conditions of contractibility and closedness are required only to ensure preservation of Nisnevich equivalences. Nonetheless, this fails for our $\mathbb{P}^1$-localization. The point is that, since $f_*$ is left exact, to ensure the preservation of $X$-homotopy equivalences, it suffices to show that $f_*X$ is contractible. For $X=\mathbb{A}^1$, this follows from the interval property of $\mathbb{A}^1$. In detail, let $f:S\to T$ and suppose $\mu_S:\mathbb{A}_S^1\times_S \mathbb{A}^1_S\to \mathbb{A}^1_S$ is the standard multiplication map. Finally, let $i_S:S\to \mathbb{A}^1_S$ be the zero section. Then the strict $\mathbb{A}^1_T$-homotopy inverse of $p:f_*\mathbb{A}^1_S\to T=f_*S$ is given by $f_*i_S$. Indeed, a strict $\mathbb{A}^1_T$-homotopy between $f_*i_S\circ p$ and the identity can be given by the following composition:
    $$f_*\mathbb{A}^1_S\times \mathbb{A}^1_T\to f_*\mathbb{A}^1_S\times f_*f^*\mathbb{A}^1_T\simeq f_*\mathbb{A}^1_S\times f_*\mathbb{A}^1_S\xrightarrow{f_*\mu}f_*\mathbb{A}^1_S$$ 
    Since $\mathbb{P}^1$ lacks such a multiplication map (\Cref{p1 not interval}), deducing a similar result for the $\mathbb{P}^1$-motivic localization seems harder.
\end{rem}

\subsection{Continuity}
We list out some standard continuity results for $\mathcal{H}^{\mathbb{P}^1}(-)$ that follow from its classical $\mathbb{A}^1$ local counterpart. Most of the proofs can be obtained from [\cite{hoyois2017six}, \S4], see [\cite{sfbat}, \S4] though.
\begin{prop}\label[prop]{bir detect}
       \begin{enumerate}
           \item For a nisnevich cover $f^\alpha: U^\alpha\to S$, the pull back maps $$f^{\alpha,*}: \mathcal{P}(Sm_S)\to \mathcal{P}(Sm_{U^\alpha})$$ detects projective motivic equivalences.
           \item  The presheaf $\mathcal{H}^{\mathbb{P}^1}:Sch^{op}\to Pr^L$ is nisnevich separated. i.e. For a nisnevich cover $f^\alpha: U^\alpha\to S$, the pull back maps $$f^\alpha: \mathcal{H}^{\mathbb{P}^1}(S)\to \mathcal{H}^{\mathbb{P}^1}(U^\alpha)$$ are jointly conservative. 
           \item The presheaf $$\mathcal{H}^{\mathbb{P}^1}(-): Sch\to Pr^L$$ is a Nisnevich sheaf.
       \end{enumerate}  
    \end{prop}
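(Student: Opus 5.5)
The plan is to follow the $\mathbb{A}^1$-local template of [\cite{hoyois2017six}, \S4], using the fact that every ingredient used there, other than the interval structure, is a formal consequence of $\mathbb{P}^1$-localization being generated by a pullback-stable class of schematic maps. The basic compatibility to establish first is that for a smooth (in particular étale) $f:U\to S$, the restriction $f^*:\mathcal{P}(Sm_S)\to\mathcal{P}(Sm_U)$ admits the left adjoint $f_\sharp$, and both functors preserve the generating classes. Indeed, $f_\sharp$ sends $\mathbb{P}^1_X\to X$ to $\mathbb{P}^1_X\to X$ and Nisnevich squares to Nisnevich squares. Likewise $f^*$ sends $\mathbb{P}^1_X\to X$ to $\mathbb{P}^1_{X_U}\to X_U$ and preserves Nisnevich squares. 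Hence $f^*$ preserves projective motivic equivalences and, having a right adjoint $f_*$ which preserves $\mathbb{P}^1$-local Nisnevich-local objects, also preserves local objects. It therefore commutes with $L_{pmot}$: $L_{pmot}f^*\simeq f^*L_{pmot}$. This is also visible directly from the model $\colim_n(L_{nis}L_{\mathbb{P}^1})^n$ of \Cref{model for pmot localization} together with the colimit formula of \Cref{model for P1 localization}, since $(\delta^n_{\mathbb{P}}\times Y)$ for $Y\in Sm_U$ is computed the same way over $U$ or over $S$.

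For (1), let $\phi:\esc{X}\to\esc{Y}$ be a map with each $f^{\alpha,*}\phi$ a projective motivic equivalence. By the commutation above, each $f^{\alpha,*}L_{pmot}\phi$ is an equivalence of Nisnevich sheaves on $Sm_{U^\alpha}$. Now $L_{pmot}\phi$ is a map of Nisnevich sheaves on $Sm_S$, and the family of restrictions to the étale slices $Sm_{U^\alpha}$ is jointly conservative on Nisnevich sheaves. This holds because every $X\in Sm_S$ is Nisnevich-covered by the $X\times_S U^\alpha$, and a map of sheaves that is an equivalence on a cover of each object is an equivalence. Hence $L_{pmot}\phi$ is an equivalence, so $\phi$ is a projective motivic equivalence. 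Statement (2) is then immediate: a map in $\mathcal{H}^{\mathbb{P}^1}(S)$ whose pullbacks $L_{pmot}f^{\alpha,*}=f^{\alpha,*}$ are equivalences is a projective motivic equivalence between local objects, hence an equivalence.

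For (3), I would first reduce, using the descent property of $\mathcal{P}_{nis}(-)$ as a presheaf on $Sch$ (the $\mathbb{A}^1$-free part of [\cite{hoyois2017six}, Proposition 4.8], which only uses that Nisnevich sheaves on $Sm$ glue), to showing that $\mathbb{P}^1$-locality is a Nisnevich-local condition. This means showing that $\mathcal{H}^{\mathbb{P}^1}(S)\simeq\lim\mathcal{H}^{\mathbb{P}^1}(U^\bullet)$ is identified with the full subcategory of $\lim\mathcal{P}_{nis}(U^\bullet)$ cut out levelwise by $\mathbb{P}^1$-locality. The key claim is that a Nisnevich sheaf $\esc{F}$ on $Sm_S$ is $\mathbb{P}^1$-local if and only if each $f^{\alpha,*}\esc{F}$ is. One direction follows from the preservation of local objects above. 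For the converse, I would test the map $\esc{F}(X)\to\esc{F}(\mathbb{P}^1_X)$ by Nisnevich descent along the Čech nerve of $\{X\times_S U^\alpha\to X\}$. Since $\mathbb{P}^1_{X\times_S U}=\mathbb{P}^1_X\times_S U$ and the Čech nerve of the pulled-back cover of $\mathbb{P}^1_X$ is the $\mathbb{P}^1$ of the Čech nerve, the map becomes a limit of maps $\esc{F}(W)\to\esc{F}(\mathbb{P}^1_W)$ for $W$ smooth over some $U^{\alpha_0}\times\cdots\times U^{\alpha_n}$. These are equivalences by hypothesis, because such intersections are smooth over each $U^{\alpha_i}$. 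The main obstacle is this bookkeeping: matching the limit of localizations with the localization of the limit, and handling the right adjoints $f_*$ in the Beck--Chevalley condition for the descent datum. I expect it to go through exactly as in \textit{loc.\ cit.}, since no interval structure is ever used.
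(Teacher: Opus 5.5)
Your proposal is correct and follows the paper's route: the paper's proof just transports Hoyois's Propositions 4.5 and 4.8 with $\mathbb{P}^1$-projections in place of $\mathbb{A}^1$-projections, and you supply the details of that transport. The bookkeeping you worry about in (3) is harmless: the \v{C}ech nerve of a Nisnevich cover has étale transition maps, so the transition functors are plain restrictions that preserve $\mathbb{P}^1$-local objects, and no separate Beck--Chevalley argument is needed.

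One small slip: $f^*$ preserves local objects because its \emph{left} adjoint $f_\sharp$ sends generating maps to generating maps. It is not because of the right adjoint $f_*$; that $f_*$ preserves local objects is only a restatement of $f^*$ preserving projective motivic equivalences.
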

    \begin{proof}
By replacing $\mathbb{A}^1$ projections with $\mathbb{P}^1$ projections, claims 1 and 2 follow verbatim from [\cite{hoyois2017six}, Proposition 4.5], while 3 follows the same way as in Proposition 4.8 in {\textit{loc.cit}}.     \end{proof}
 \begin{thm}\label{continuity of Hb}
   Let $S=\plim S_\alpha$ be a projective system of qcqs schemes with affine transition maps $f_{\alpha\beta}$ and legs $\phi_\alpha: X\to X_\alpha$. Then the pull back assembly map in $Pr^L$ $$\phi=\dcolim_{\alpha} \phi_\alpha^*:  \dcolim_{\alpha}\mathcal{H}^{\mathbb{P}^1}(S_\alpha)\to \mathcal{H}^{\mathbb{P}^1}(S)$$ induced by the left adjoints $\phi_\alpha^*$ is an equivalence of presentable infinity categories. Equivalently, $$\plim_{\alpha} \phi_{\alpha,*}:  \mathcal{H}^b(S)\to \plim_{\alpha}\mathcal{H}^{\mathbb{P}^1}(S_\alpha)$$ is an equivalence in ${Cat}_\infty$.
\end{thm}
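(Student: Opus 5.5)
We follow the strategy of [\cite{hoyois2017six}, Proposition C.12 / Lemma 4.?] for continuity of $\mathcal{H}^{\mathbb{A}^1}$, replacing $\mathbb{A}^1$-projections by $\mathbb{P}^1$-projections throughout. (The second displayed assertion involves $\mathcal{H}^b(S)$, which we read as a typo for $\mathcal{H}^{\mathbb{P}^1}(S)$.) Colimits in $Pr^L$ are computed as limits in $Cat_\infty$ along the right adjoints, so the two formulations are equivalent and we work with the first.

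\textbf{Step 1: the presheaf level.} Since the $S_\alpha$ are qcqs and the transition maps are affine, the standard limit theorems of EGA IV \S8 and \S17 (absolute noetherian approximation for smooth finitely presented schemes) give an equivalence of $1$-categories
$$\dcolim_\alpha Sm^{fp}_{S_\alpha}\xrightarrow{\ \simeq\ } Sm_S,$$
where $Sm^{fp}$ denotes finitely presented smooth schemes, and our convention already makes smooth morphisms quasicompact, hence of finite presentation. Taking presheaf categories, which turns filtered colimits of small categories into colimits in $Pr^L$, yields $\dcolim_\alpha\mathcal{P}(S_\alpha)\simeq\mathcal{P}(S)$ via the functors $\phi_\alpha^*$.

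\textbf{Step 2: passing to localizations.} Each $\mathcal{H}^{\mathbb{P}^1}(S_\alpha)$ is the localization of $\mathcal{P}(S_\alpha)$ at a small set $W_\alpha$. Here $W_\alpha$ consists of the projections $\mathbb{P}^1_X\to X$ together with the maps generating Nisnevich descent, namely the morphisms from pushouts of Nisnevich squares to their targets and $\emptyset\to\emptyset$. A colimit in $Pr^L$ of localizations is the localization of the colimit at the union of the images of the generators. So
$$\dcolim_\alpha\mathcal{H}^{\mathbb{P}^1}(S_\alpha)\simeq \mathcal{P}(S)\bigl[\textstyle\bigcup_\alpha\phi_\alpha^*W_\alpha^{-1}\bigr].$$
It then suffices to show that $\bigcup_\alpha\phi_\alpha^*W_\alpha$ generates the same strongly saturated class as $W_S$.

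\textbf{Step 3: comparing generators.} Pullbacks of $\mathbb{P}^1$-projections are $\mathbb{P}^1$-projections. Conversely, every $X\in Sm_S$ descends to some $X_\alpha$ by Step 1, and $\mathbb{P}^1_X\to X$ is then the pullback of $\mathbb{P}^1_{X_\alpha}\to X_\alpha$. Pullbacks of Nisnevich squares are Nisnevich squares. Conversely, a Nisnevich square over $S$ is a finitely presented datum: an étale map, an open immersion, and an isomorphism over the closed complement. It therefore descends to some $S_\alpha$, and it remains a Nisnevich square after enlarging $\alpha$, using the limit results for étale maps, open immersions and isomorphisms.

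\textbf{Main obstacle.} The delicate point is exactly this descent of Nisnevich squares, in particular the isomorphism condition on the reduced closed complements. This is where Hoyois uses the fact that the Nisnevich topology is generated by finitely presented data, and I would cite his argument, as in [\cite{sfbat}, \S4], verbatim. The $\mathbb{P}^1$ part introduces no new difficulty, since $\mathbb{P}^1$ is defined over $\mathbb{Z}$. Once both classes of generators are matched, the assembly map $\phi$ is an equivalence.
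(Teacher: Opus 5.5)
Your proposal is correct and takes the same route as the paper. The paper's proof just invokes Hoyois's continuity argument [\cite{hoyois2015quadratic}, Proposition C.2] verbatim with $\mathbb{P}^1$-projections in place of $\mathbb{A}^1$-projections, and your Steps 1--3 (the presheaf-level equivalence via the EGA IV limit theorems, colimits of localizations in $Pr^L$, and descent of the generating Nisnevich squares and $\mathbb{P}^1$-projections) spell out exactly that argument; your reading of $\mathcal{H}^b(S)$ as a typo for $\mathcal{H}^{\mathbb{P}^1}(S)$ is also right.
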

\begin{proof}
    This follows verbatim to  [\cite{hoyois2015quadratic}, Proposition C.2] by replacing $\mathbb{A}^1$ projections through $\mathbb{P}^1$ projections. \end{proof}
\subsection{Failure of Gluing}\label{fail gluing}
In this subsection, we demonstrate that the $\mathbb{P}^1$-local version of the motivic fracture square [\cite{morel19991}, Theorem 3.2.21]. This prevents us from establishing the complete six-functor formalism for the assignment $S\mapsto \mathcal{SH}^{\mathbb{P}^1}_{S^1}(S)$.

The key ingredient for producing the localization fracture square [\cite{morel19991}, Theorem 3.2.21] is the following. Let $p: X\to S$ be a smooth morphism, $i:Z\to S$ a closed immersion, and $t: Z\to X$ a partially defined section of $p$. One first constructs the space $h_S(X,t)$ of fractured deviations:
$$h_S(X,t)(Y):=\begin{cases}
   * \text{ if } Y_Z=\emptyset\\
   Hom_S(Y,X)\times _{Hom_Z(Y_Z,X_Z)}* \text{  otherwise}
\end{cases}$$

and then proves that this space is motivically contractible. In fact, it is known that it is Nisnevich locally connective and that $L_{nis}L_{\mathbb{A}^1}L_{nis}h_S(X,t)$ and hence $L_{mot}h_S(X,t)$ is contractible. We intend to show that $L_{pmot}h_S(X,t)$ is not always contractible. We do this by producing an explicit counterexample.
\vspace{.5 cm}
\begin{coex}
Let $S=\mathbb{A}^1_k$, $Z=k\xhookrightarrow{0}S$, and $t:k\xrightarrow[]{(0,0)} X=\mathbb{A}^1_S$. The projection $p:h_S(P,t)\to h_S(S)=*$ is not a projective equivalence.
\end{coex}
\begin{proof}
With the following choices of coordinates, we let $S=Spec k[x]$, $Z=Speck$, $X=Speck[x,y]$, with $Z$ given by the ideal $(x)$ in $k[x]$, the projection map $X\to S$ given by the inclusion $k[x]\subset k[x,y]$, and $t$ given by the ideal $(x,y)$. Let us recall that $X=\mathbb{A}^1_S$ is $\mathbb{P}^1$-local over $S$ (\Cref{affine algebras are projective local}). Thus, it suffices to show that the induced map $$PSh_S(S, X)\to PSh_S(h_S(X,t),X)$$
is not an isomorphism. Of course, this map is injective via the canonical section $S\to h_S(X,t)$. Hence, we shall show that this isn't surjective. Let $a: Y=SpecR\to Spec k[x]=S$ be a smooth $k$-algebra given by a choice $x\mapsto a\in R$. Observe that in that case $X_Z=Spec k[y]$, $Y_Z=Spec(R/aR)$. Clearly, $Y_Z\neq \emptyset$ iff $a$ is not a unit. The trivial map $*: Y_Z\to X_Z$ corresponds to $y\mapsto aR$. So an element of $h_S(X,t)(Y)$ corresponds to an $S$-map $SpecR=Y\to X=Speck[x,y]$ given by $x\mapsto a$ ($S$-map) and $y\mapsto y'$ such that $y'\in Ra$. That is, $h_S(X,t)(a:Y\to S)=Ra$, and the projection $h_S(X,t)\to h_S(S)$ on $Y$-sections is given by the inclusion $Ra\to *$. The $Y$-sections of $X$ are given by $y\mapsto b\in R$. The $S$-morphism $\phi: h_S(X,t)\to X$, given on $Y$-sections by $Ra\ni r \mapsto rb \in R$, cannot come from an $S$-morphism $Y\to X$ (which must be given by a fixed $c\in R$) via the projection $Ra\to *$.
\end{proof}
Corresponding to the above example, let $U$ be the open complement $U=\mathbb{G}_m\hookrightarrow \mathbb{A}^1=S$. The expected localization fracture square is on the left and is the one on the right when $F=X$ from the above example. 
\[
\xymatrix{
j_\sharp j^*F\ar[r]\ar[d]&F\ar[d]&&&h_S(X_U)\ar[r]\ar[d]&h_S(X)\ar[d]\\
h_S(U)\ar[r]& i_*i^*F&&& h_S(U)\ar[r]& i_*X_Z
}
\]
If this were to be cocartesian in $\mathcal{H}^{\mathbb{P}^1}(S)$ we must have a projective motivic equivalence $h_S(X)\bigsqcup_{h_S(X_U)}h_S(U)\to i_*X_Z$. 

Because $X_Z$ is affine and hence $\mathbb{P}^1$-motivic, we know that the space $i_*X_Z$ is projective motivic local. Since $L_{pmot}$ is locally cartesian (\Cref{pmot cartesian localization}), for every $t: S\to i_*X_Z$ the pullback $$\big(h_S(X)\bigsqcup_{h_S(X_U)}h_S(U)\big )\times_{i_*X_Z} S\to S$$ must be a projective motivic equivalence over $S$. But $h_S(X,t)$ is precisely the space $\big(h_S(X)\bigsqcup_{h_S(X_U)}h_S(U)\big )\times_{i_*X_Z} S$. That is, we must have that $h_S(X,t)\to S$ is a $\mathbb{P}^1$-motivic equivalence, which contradicts the counterexample we have shown. 

\section{\texorpdfstring{$\mathbb{P}^1$}{{P}1}-Algebraic topology}

The aim of this section is to carry out a $\mathbb{P}^1$ analog of Morel's program on the strong $\mathbb{A}^1$-invariance of $\mathbb{A}^1$-homotopy groups of spaces [\cite{morel2012a1}, Chapter 6]. In doing so, we shall show that, over a perfect field $k$, the $\mathbb{P}^1$-homotopy groups are $\mathbb{P}^1$-invariant. However, our result is weaker in the sense that we neither prove nor disprove that the $i$-th $\mathbb{P}^1$-homotopy group is $i$-strong. Nor do we prove or disprove that strongly $\mathbb{P}^1$-invariant sheaves of abelian groups are automatically strictly $\mathbb{P}^1$-invariant.

As usual, we define the $i$-th $\mathbb{P}^1$-homotopy group by the following formula:
$$\pi_i^{\mathbb{P}^1}:=\pi_i^{nis}L_{pmot}.$$
\subsection{On the $\mathbb{P}^1$-path component $\pi_0^{\mathbb{P}^1}$ }\mbox{}

Let $\esc{X}\in \mathcal{H}^{\mathbb{P}^1}$ be a $\mathbb{P}^1$-motivic space. 

\begin{thm}\label{zar stalk in generic pi0}
    Let $X\in Sm_k$ be a connected scheme with generic point $\eta$. Then, for any $x\in X$, the map $\pi_0\esc{X}(X_x)\to \pi_0\esc{X}(\eta)$ has a trivial fiber. 
\end{thm}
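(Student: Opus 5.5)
The plan is to follow Morel's argument for the analogous $\mathbb{A}^1$-statement, with a Gabber-type geometric presentation. The new input is that we compactify the affine line to $\mathbb{P}^1$ and use $\mathbb{P}^1$-locality of $\esc{X}$ instead of $\mathbb{A}^1$-locality. Throughout, $\esc{X}$ is pointed, and ``trivial fiber'' refers to the fiber over the base point. Since $\esc{X}$ is a Nisnevich sheaf and $X_x$ is a cofiltered limit of open neighbourhoods, $\pi_0\esc{X}(X_x)=\colim_{U\ni x}\pi_0\esc{X}(U)$, and similarly at $\eta$. So take a class $\alpha\in\pi_0\esc{X}(U)$, for some open $U\ni x$, that becomes trivial at $\eta$. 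It is then already trivial on a dense open $V\subset U$. Put $Z=U\setminus V$, a closed subset of positive codimension. It suffices to show that $\alpha$ becomes trivial on some smaller neighbourhood of $x$.

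\textbf{Step 1 (presentation).} Apply Gabber's presentation lemma; over finite fields use the version of Hogadi--Kulkarni. After shrinking $U$ around $x$, it provides an open $W\subset\mathbb{A}^{d-1}_k$ and an étale map $\phi:U\to\mathbb{A}^1_W$ with the following properties. The restriction $Z\to\mathbb{A}^1_W$ is a closed immersion, $Z$ is finite over $W$, and $\phi^{-1}\phi(Z)=Z$. Consequently the square with corners $U\setminus Z$, $U$, $\mathbb{A}^1_W\setminus Z$, $\mathbb{A}^1_W$ is a Nisnevich distinguished square.

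\textbf{Step 2 (compactify).} Because $Z$ is finite over $W$, it is closed in $\mathbb{P}^1_W$ and disjoint from the section $\infty_W$. Gluing $\mathbb{A}^1_W\setminus Z\subset\mathbb{P}^1_W\setminus Z$ therefore yields a Nisnevich square with corners $U\setminus Z$, $U$, $\mathbb{P}^1_W\setminus Z$, $\mathbb{P}^1_W$. Nisnevich locality of $\esc{X}$ gives
\[\esc{X}(\mathbb{P}^1_W)\simeq \esc{X}(U)\times_{\esc{X}(U\setminus Z)}\esc{X}(\mathbb{P}^1_W\setminus Z).\]
For a fiber product of spaces, the map from $\pi_0$ of the product to the fiber product of the $\pi_0$'s is surjective. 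The pair $(\alpha,*)$ agrees in $\pi_0\esc{X}(U\setminus Z)$, since $\alpha|_{V}$ is trivial. Hence there is $\beta\in\pi_0\esc{X}(\mathbb{P}^1_W)$ with $\beta|_U=\alpha$ and $\beta|_{\mathbb{P}^1_W\setminus Z}=*$.

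\textbf{Step 3 ($\mathbb{P}^1$-locality).} Since $\esc{X}$ is $\mathbb{P}^1$-local, pullback along the projection gives $\esc{X}(W)\simeq\esc{X}(\mathbb{P}^1_W)$, so $\beta=p^*\gamma$ with $\gamma=\infty_W^*\beta$. The section $\infty_W$ factors through $\mathbb{P}^1_W\setminus Z$, so $\gamma=*$. Therefore $\beta=*$, and hence $\alpha=\beta|_U=*$, which is what we wanted.

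\textbf{Main obstacle.} The key step is Step 1: choosing a presentation in which $Z$ is finite over $W$, which is exactly what makes the point at infinity available in Step 2. It also requires care for finite $k$ and for making the shrinking of $U$ compatible with $x$. The remaining steps are formal consequences of Nisnevich descent, the surjectivity on $\pi_0$ for fiber products, and the retraction provided by a section of $\mathbb{P}^1$ (\Cref{p1 invariance remark}).
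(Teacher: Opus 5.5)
Your argument is correct. Its skeleton is the paper's: a Gabber presentation, the Nisnevich square with corners $U\setminus Z$, $U$, $\mathbb{P}^1_W\setminus Z$, $\mathbb{P}^1_W$, and then $\mathbb{P}^1$-invariance combined with a section of $\mathbb{P}^1_W\setminus Z\to W$ to kill the class. The paper phrases your Step 2 through the equivalence of fibers $\esc{X}(\mathbb{P}^1_W/\mathbb{P}^1_W\setminus Z)\simeq\esc{X}(U/U\setminus Z)$ and the long exact sequence of pointed sets, which is equivalent to your surjectivity of $\pi_0$ on the fiber product.

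The genuine difference is the section. The paper builds one only locally around the image of $x$, in two cases:
\begin{enumerate}
\item for infinite $k$, it picks a rational point of $\mathbb{P}^1_{\bar s}\setminus Z_{\bar s}$ at a closed specialization $\bar s$ and spreads it out;
\item for finite $k$, it runs a separate induction on $\dim X$, working at the generic point of $W$ (whose residue field is infinite) and then shrinking via the inductive hypothesis.
\end{enumerate}
You observe instead that $Z\subset\mathbb{A}^1_W=\mathbb{P}^1_W\setminus\infty_W(W)$. So $\infty_W$ is already a global section of $\mathbb{P}^1_W\setminus Z\to W$, and the whole infinite/finite case split becomes unnecessary.

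The paper's route is a transport of the $\mathbb{A}^1$-argument, where $\mathbb{A}^1_W\setminus Z\to W$ has no canonical section and the local construction is really needed. Yours exploits the compactness of $\mathbb{P}^1$ and is strictly simpler. In your version the only finite-field input left is the presentation lemma itself, and you correctly flag Hogadi--Kulkarni for it. The paper applies Gabber's lemma uniformly before the case split, so its finite-field case addresses only the section and not the presentation.

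Two trivial points are left implicit in your proposal. If $x\notin Z$, then $V$ itself is already a neighbourhood of $x$. The presentation lemma is usually stated for closed points, which suffices here, since you may replace $x$ by a closed point of $U\cap\overline{\{x\}}$.
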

\begin{proof}
The argument proceeds by translating Morel's $\mathbb{A}^1$-local Bloch-Ogus-Gabber strategy, specifically its rendition in [\cite{bachmann2024strongly}, Proposition 2.9], to the $\mathbb{P}^1$-local context. This translation is essentially adapted from Ayoub's argument in [\cite{ayoub2020P1}, Lemma 4.14].

We shall prove the following: For $X\in Sm_k$, a point $x\in X$, and a proper closed subset $Z'\subset X$ with $x\in Z'$ (the case $x \notin Z'$ being trivial), there exists an open neighborhood $X'$ of $x$ such that the fiber of $$\pi_0\esc{X}(X')\to \pi_0\esc{X}(X'\setminus Z')$$ is trivial. 

Let $X_1$ be an open neighborhood of $x\in X$ and $Z'\cap X_1=Z\subset X_1$ be a closed subscheme containing $x$ . By Gabber’s Lemma [\cite{MR1466971}] applied to $X_1, Z, x$, we obtain a smaller open neighborhood $X_2$ together with an étale neighborhood $X_2 \to \mathbb{A}^1_S$ of $Z\cap X_2$. Through the composite $Z\cap X_2\hookrightarrow X_2\to \mathbb{A}_S^1$ identify $Z\cap X_2$ as a closed subscheme of $\mathbb{A}^1_S$. Composing with the open embedding  $j_\infty: \mathbb{A}_S^1\hookrightarrow \mathbb{P}^1_S$ given as the complement of $\infty$, we find that $Z\cap X_2$ is also a closed subscheme of $\mathbb{P}^1_S$. Indeed, since  $Z \cap X_2 \to S$  is finite and $\mathbb{P}_S^1\to S$ is proper, the immersion $Z\cap X_2\hookrightarrow \mathbb{P}^1_S$ is proper and hence closed.  

\underline{\textit{Case 1: $k$ is infinite}.}

Since $Z \cap X_2 \to S$ is finite, when $k$ is infinite, the canonical map $\mathbb{P}^1_S \setminus Z \to S$ admits a section over an open neighborhood $S_1$ of the image of $x$ in $S$. To see this, let $s\in S$ be the image of $x$ and $\bar{s} \in S$ be a closed specialization of $s$; then $\mathbb{P}^1_{\bar{s}} \setminus Z_{\bar{s}}\to \bar{s}$ has a rational section because $k(\bar{s})$ is infinite. This defines a rational section $f: U\to \mathbb{P}^1_{\bar{s}}$ on a small neighborhood $U$ of $\bar{s}$. Now $f^{-1}(Z) \subset U$ is a closed subscheme that, by construction, does not contain $\bar{s}$. So $S_1=U\setminus f^{-1}(Z)$ works. Write $X_3$ for the preimage of $S_1$ in $X_2$.    
    
    We obtain a Nisnevich square as shown on the left:
\[
\xymatrix{
X_3\setminus Z \ar@{^(->}[r]\ar[d] & X_3\ar[d]&&\esc{X}(S)\ar[r]^{\simeq }& \esc{X}(\mathbb{P}^1_S)\ar[r]\ar[d]&\esc{X}(\mathbb{P}^1_S\setminus Z)\ar[d]\\
\mathbb{P}^1_S\setminus Z\ar@{^(->}[r]& \mathbb{P}^1_S&&& \esc{X}(X_3)\ar[r]&\esc{X}(X_3\setminus Z)
}
\]
Mapping it to the projective motivic (which is Nisnevich-local, therefore) space $\esc{X}$, we obtain the cartesian square on the right, where the top-left equivalence is due to $\mathbb{P}^1$-invariance of $\esc{X}$. Thus, the fiber of the given point yields an equivalence of pointed sections of pointed spaces $$\esc{X}(\mathbb{P}^1_{S_1}/\mathbb{P}^1_{S_1}\setminus Z)\simeq \esc{X}(X_3/X_3\setminus Z).$$ But since $\mathbb{P}^1_{S_1} \setminus Z \to  S $ is split we find that the morphism $$a:\pi_0\esc{X}(\mathbb{P}^1_{S_1})\to \pi_0\esc{X}(\mathbb{P}^1_{S_1}\setminus Z)$$ is a split monomorphism and hence injective. So that $$\pi_0(\esc{X}(\mathbb{P}^1_{S_1}/\mathbb{P}^1_{S_1}\setminus Z))\to \pi_0(\esc{X}(\mathbb{P}^1_{S_1}))$$ is the zero map whence so is $$\pi_0(\esc{X}(X_3/X_3\setminus Z))\to \pi_0(\esc{X}(X_3)).$$ So the fiber of $\pi_0\esc{X}(X_3)\to \pi_0\esc{X}(X_3\setminus Z)$ is indeed trivial. By taking the colimit over all $X_1$ and $Z$'s, we can conclude easily.

\underline{\textit{Case 2: $k$ is finite}.}

When $k$ is finite, we shall prove the claim by induction on the dimension of $X$. If $dimX=0$, connectivity implies $X=x$ and $Z=\emptyset$, so the claim holds vacuously.

When $dimX=1$, since there are finitely many closed points, we may remove all points except $x$ and assume $Z=x$. We may use the same proof as above. It remains to construct a section of $\mathbb{P}^1_S \setminus Z \to  S $. We may use the same arguments as before. Note, however, that the image of $x$ is already closed, so $\bar{s}=s$ is the image itself. As $s/k$ is a finite extension, $\mathbb{P}^1_s$ has at least three points. So $\mathbb{P}^1_s\setminus x\to s$ must have a section, and we are done.

For $dimX>1$, we choose $\eta_S$ instead of $\bar{s}$. Since $dimS=dimX-1>0$, the residue field $k(\eta_S)$ is infinite. As in the infinite case, we may find a section of $\mathbb{P}^1_S \setminus Z \to S$ over some open set $U\subset S$ (around $\eta_S$, not necessarily the image $s$). But then the inductive hypothesis applies to $(S,s, S\setminus U)$, and we can shrink $S$ to obtain $S_1$ around $s$ such that $\pi_0^{}\esc{X}(S_1)\to \pi_0^{}\esc{X}(S_1\cap U)$ has trivial fiber. We consider the following commutative diagram:
\[\xymatrix{
\pi_0\esc{X}{({S_1})} \ar[r]^{\simeq }\ar@{>->}[d]&\pi_0\esc{X}{(\mathbb{P}^1_{S_1})} \ar[r]^{a}\ar[d]& \pi_0\esc{X}{(\mathbb{P}^1_{S_1}\setminus Z)}\ar[d]\\
\pi_0\esc{X}{({S_1\cap U})} \ar[r]^{\simeq}&\pi_0\esc{X}{(\mathbb{P}^1_{S_1\cap U})} \ar[r]& \pi_0\esc{X}{(\mathbb{P}^1_{S_1\cap U}\setminus Z)}\\
}\]
Since the local section of $\mathbb{P}^1_S \setminus Z \to  S $ restricts to a global section of $\mathbb{P}^1_{S_1} \setminus Z \to  S_1 $, we obtain that the bottom horizontal map in the diagram above is split monic. Since the left vertical map has a trivial fiber, it follows that so does $a$.
\end{proof}
\begin{cor}\label[cor]{P1 connected on F_k}
 For a projective motivic space $\esc{X}$, the Nisnevich homotopy path component $\pi_0^{nis}\esc{X}$ is trivial if and only if $\pi_0^{nis}\esc{X}(K)=0$ for all $K\in \mathcal{F}_k$.
\end{cor}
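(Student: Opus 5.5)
The forward direction is immediate. If $\pi_0^{nis}\esc{X}$ is trivial, i.e.\ isomorphic to the terminal sheaf $*$, then its value on any $K\in\mathcal{F}_k$ is a point. Here $\pi_0^{nis}\esc{X}(K)$ means the value on $\operatorname{Spec}K$, extended to finitely generated field extensions by the usual colimit over smooth models.

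For the converse, the plan is to reduce to stalks and then apply \Cref{zar stalk in generic pi0}. A map of Nisnevich sheaves of sets is an isomorphism iff it is one on all Nisnevich stalks. These are the henselizations $\mathcal{O}^h_{X,x}$ for $X\in Sm_k$ and $x\in X$. Sheafification does not change stalks, so $(\pi_0^{nis}\esc{X})_{x}=\pi_0\esc{X}(X^h_x)$, where this is understood as the colimit of $\pi_0\esc{X}$ over étale neighbourhoods of $x$. We must therefore show two things for each such stalk: it is nonempty, and any two elements agree.

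The key steps, in order, are as follows.

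\textbf{(i) Compatibility.} I would note that \Cref{zar stalk in generic pi0} and its proof are formulated for Zariski localizations $X_x$, but the proof really shows the following. For every $x$ and proper closed $Z'\ni x$, there is a neighbourhood $X'$ such that $\pi_0\esc{X}(X')\to\pi_0\esc{X}(X'\setminus Z')$ has trivial fiber over the base point. Since étale neighbourhoods are themselves smooth $k$-schemes, the same statement applies to them. Passing to the colimit over étale neighbourhoods $(X',x)$ and over $Z'$, the fiber of $\pi_0\esc{X}(X^h_x)\to\pi_0\esc{X}(\operatorname{Frac}\mathcal{O}^h_{X,x})$ over the base point is trivial. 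The fraction field of the henselization is a filtered colimit of function fields of étale neighbourhoods, each of which lies in $\mathcal{F}_k$. By hypothesis, and by the continuity of $\pi_0$ of a Nisnevich sheaf along these colimits, its $\pi_0$ is a point.

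\textbf{(ii) Nonemptiness.} Step (i) only speaks about a fiber over the base point. To have a base point at all, I would assume $\esc{X}$ is pointed; this holds if $\pi_0\esc{X}(k)\neq\emptyset$, which is given by the hypothesis with $K=k$. The pointing pulls back to every stalk, so every stalk is nonempty.

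\textbf{(iii) Uniqueness.} Any element of the stalk maps to the unique element of $\pi_0\esc{X}$ of the generic point, which is the base point. Since the fiber over the base point is trivial, the element is itself the base point. Hence every stalk is a singleton and $\pi_0^{nis}\esc{X}\cong *$.

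The main obstacle is that \Cref{zar stalk in generic pi0} yields only a trivial \emph{fiber} over the distinguished point, not injectivity, and it is phrased Zariski-locally. What rescues the argument is that we only need to identify elements lying over the base point, and the target value is a singleton, so every element lies over the base point. I would also check that the Gabber-lemma argument in the proof of \Cref{zar stalk in generic pi0} runs unchanged for étale neighbourhoods. If it does not, I would instead use that the triviality statement at Zariski stalks already suffices: a Nisnevich sheaf whose Zariski stalks at all points of all smooth schemes are singletons is trivial, because Nisnevich stalks are colimits of Zariski-local rings of étale neighbourhoods.
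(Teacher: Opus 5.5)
Your proof is correct and is essentially the paper's argument. The paper reduces to showing that $\pi_0^{zar}\esc{X}$ is trivial, since then its Nisnevich sheafification $\pi_0^{nis}\esc{X}$ is trivial too, and applies \Cref{zar stalk in generic pi0} at the Zariski local schemes $X_x$ together with $\pi_0\esc{X}(\eta_X)=*$. That is your fallback, so the detour through henselizations and any re-check of Gabber's lemma are unnecessary; your colimit over étale neighbourhoods is nonetheless valid, since those neighbourhoods are themselves in $Sm_k$. Your step (ii) also improves on the paper: you fix a global base point via $\pi_0\esc{X}(k)\neq\emptyset$, so the theorem's ``trivial fiber'' refers to that point and every stalk is nonempty, which the paper leaves implicit in the phrase ``for any choice of base point''.
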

\begin{proof}
    It suffices to show that  $\pi_0^{zar}\esc{X}$ is trivial. But from the theorem above for any point $x\in X\in Sm_k$ we see that $$\pi_0^{zar}\esc{X}(X_x)=\pi_0^{}\esc{X}(X_x)\hookrightarrow \pi_0^{zar}\esc{X}(\eta_X)=*$$ has a trivial fiber for any choice of base point. Hence   $\pi_0^{zar}\esc{X}$ is indeed trivial.
\end{proof}
\begin{cor}
    For any projective motivic $H$-group-space $\esc{X}$ and connected $X\in Sm_k$ with generic point $\eta$ the canonical map $\pi_0^{zar}\esc{X}(X)\to \pi_0^{zar}\esc{X}(\eta_X)$ is injective.
\end{cor}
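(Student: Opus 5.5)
The plan is to upgrade the trivial-fiber statement of \Cref{zar stalk in generic pi0} to genuine injectivity by using the $H$-group structure to translate fibers. The key observation is that for an $H$-group-space, the $\pi_0$ of any section space is a group, and maps induced by restriction are group homomorphisms. For a homomorphism of groups, injectivity is equivalent to having trivial fiber (kernel) over the neutral element. Since restriction maps of $\esc{X}$ are maps of $H$-spaces, $\pi_0\esc{X}(-)$ is a presheaf of groups. The same holds for its Zariski sheafification $\pi_0^{zar}\esc{X}$, because sheafification preserves finite products and hence group objects. The canonical map $\pi_0^{zar}\esc{X}(X)\to\pi_0^{zar}\esc{X}(\eta_X)$ is therefore a group homomorphism, and it suffices to show that its kernel is trivial.

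First I reduce to stalks. Suppose $\alpha\in\pi_0^{zar}\esc{X}(X)$ restricts to the unit at $\eta_X$. Since $\pi_0^{zar}\esc{X}$ is a Zariski sheaf, $\alpha$ is trivial if and only if its germ in each Zariski stalk $\pi_0^{zar}\esc{X}(X_x)=\pi_0\esc{X}(X_x)$ is trivial. Here I use that $\pi_0$ commutes with the filtered colimit defining the stalk. Fix $x\in X$. The germ $\alpha_x$ maps to the unit under $\pi_0\esc{X}(X_x)\to\pi_0\esc{X}(\eta)$, because $\eta$ is a generization of $x$ and $X$ is irreducible (smooth and connected). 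This map is a homomorphism whose fiber over the unit is trivial by \Cref{zar stalk in generic pi0}, applied with base point the unit. Hence $\alpha_x$ is the unit for every $x$, so $\alpha$ is the unit and the map is injective.

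The main point to check carefully is that \Cref{zar stalk in generic pi0} really gives a trivial fiber over the unit section, that is, over the canonical base point, and that this fiber is the group-theoretic kernel. This is clear because the proof of the theorem treats the fiber over a given point, realized as the homotopy fiber of $\esc{X}(X_3)\to\esc{X}(X_3\setminus Z)$. It can be taken pointed at the $H$-unit, which is a global section coming from $\esc{X}(k)$. A second point is that the theorem is stated for local schemes $X_x$. Alternatively, one could argue directly on opens: if $\alpha$ dies on $X\setminus Z'$, then by the local statement it dies on a neighbourhood $X'$ of each $x\in Z'$. These neighbourhoods together with $X\setminus Z'$ cover $X$, which gives triviality of $\alpha$ in the Zariski sheaf. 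Either route works, and I would present the stalk version for brevity.
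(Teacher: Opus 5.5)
Your proof is correct and follows essentially the paper's argument: both use the trivial fiber over the unit from \Cref{zar stalk in generic pi0}, the fact that the induced maps on $\pi_0$ are group homomorphisms, and Zariski separatedness via the identification $\pi_0^{zar}\esc{X}(X_x)=\pi_0\esc{X}(X_x)$. The only difference is the order of the steps. The paper first upgrades each stalk map $\pi_0\esc{X}(X_x)\to\pi_0\esc{X}(\eta_X)$ to an injection and then uses the injectivity of $\pi_0^{zar}\esc{X}(X)\to\prod_{x}\pi_0\esc{X}(X_x)$, whereas you apply the kernel argument directly to the global homomorphism, which is if anything slightly cleaner.
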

\begin{proof}
    Since $$(\pi_0^{zar}\esc{X})(X_x)=\pi_0^{}\esc{X}(X_x)\text{ and }(\pi_0^{zar}\esc{X})(\eta_X)=\pi_0^{}\esc{X}(\eta_X),$$ applying the above theorem to $ \esc{X}$, pointed by the unit, and using the fact that $$\pi_1^{zar}\esc{X}(X_x)\to \pi_1^{zar}\esc{X}(\eta_X)$$ is a group homomorphism, we conclude that the last morphism is injective. Now we have a commutative triangle:
    \[
    \xymatrix{ 
     \pi_0^{zar}\esc{X}(X)\ar[r]\ar[dr]&\underset{x\in X}{\Pi}\pi_0^{zar}\esc{X}(X_x)\ar[d]\\
     & \pi_0^{zar}\esc{X}(\eta_X)}
    \]
    The top horizontal map is injective by the Zariski sheaf condition; hence, so is the composite.
\end{proof}
\begin{cor}
     For any projective motivic $H$-group-space $\esc{X}$ and connected $X\in Sm_k$ with generic point $\eta$, the canonical map $\pi_0^{nis}\esc{X}(X)\to \pi_0^{nis}\esc{X}(\eta_X)$ is injective.
\end{cor}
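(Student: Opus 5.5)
The plan is to deduce the Nisnevich statement from the Zariski one in the preceding corollary. The key observation is that the stalks of $\pi_0^{nis}\esc{X}$ and $\pi_0^{zar}\esc{X}$ at the relevant local schemes can be compared directly. At the generic point $\eta_X$, a field, both sheafifications agree with the presheaf value: $\pi_0^{nis}\esc{X}(\eta_X)=\pi_0\esc{X}(\eta_X)=\pi_0^{zar}\esc{X}(\eta_X)$. This holds because fields are points for both topologies, and $\esc{X}$ is evaluated on essentially smooth schemes via filtered colimits. So the target of the map is unchanged.

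For the source, I will use the Nisnevich sheaf condition in place of the Zariski one. The map
$$\pi_0^{nis}\esc{X}(X)\to \prod_{x\in X}\pi_0^{nis}\esc{X}(X_x^h)$$
is injective, since a Nisnevich sheaf is separated with respect to its stalks at henselizations. It therefore suffices to show that for each point $x$, the map
$$\pi_0\esc{X}(X_x^h)=\pi_0^{nis}\esc{X}(X_x^h)\to \pi_0\esc{X}(\eta_{X_x^h})$$
is injective, and then to compose with the injection $\pi_0\esc{X}(\eta_{X_x^h})\leftarrow$ relation to $\eta_X$. Two concrete points need care here. The fraction field of $X_x^h$ is a separable algebraic extension of $k(X)$, which is not $k(X)$ itself. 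I will therefore run the argument with the generic point $\eta_{X_x^h}$ and then check compatibility. The commutative square formed by $X_x^h\to X_x$ and their generic points shows that an element of $\pi_0^{nis}\esc{X}(X)$ dying at $\eta_X$ also dies at $\eta_{X_x^h}$. Hence it suffices to prove triviality of fibers of $\pi_0\esc{X}(X_x^h)\to\pi_0\esc{X}(\eta_{X_x^h})$.

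To get triviality of fibers at henselian local schemes, I will redo the proof of \Cref{zar stalk in generic pi0} with étale neighborhoods $(V,v)\to (X,x)$ in place of Zariski neighborhoods. The Gabber presentation argument applies verbatim to any smooth $V$ with a point $v$ and closed $Z\subset V$, producing a Zariski neighborhood $V'$ of $v$ such that $\pi_0\esc{X}(V')\to\pi_0\esc{X}(V'\setminus Z)$ has trivial fiber. Passing to the filtered colimit over étale neighborhoods and over proper closed subsets $Z$ gives trivial fiber for $\pi_0\esc{X}(X_x^h)\to\pi_0\esc{X}(\eta_{X_x^h})$, since $X_x^h$ is the limit of these neighborhoods and $\eta_{X_x^h}$ is the limit of the complements of the $Z$'s. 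The $H$-group structure then upgrades trivial fiber to injectivity, exactly as in the previous corollary, by translating with the group action on $\pi_0$.

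The main obstacle I expect is bookkeeping rather than substance. It consists of two things: checking that the colimit description of $X_x^h$ and of its generic point is compatible with Gabber's lemma, which is stated for Zariski-local data on smooth $k$-schemes, and checking that the base-point-independence argument survives when passing to $\pi_0^{nis}$. I would handle the first by noting that every étale neighborhood $V$ is itself in $Sm_k$, so the theorem applies to $(V,v)$ directly. I would handle the second by using that $\pi_0^{nis}\esc{X}$ is a Nisnevich sheaf of groups, so that trivial kernel implies injectivity stalkwise.
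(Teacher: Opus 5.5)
Your proposal is correct, but it is organized differently from the paper.

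The paper argues formally on the small étale site. Restricted to $Et_X$, $\pi_0^{nis}\esc{X}$ is the Nisnevich sheafification of $\pi_0^{zar}\esc{X}$. The map to the birational presheaf $V\mapsto\pi_0\esc{X}(\eta_V)$ is injective sectionwise by the preceding Zariski corollary, applied to every connected $V$ étale over $X$. Sheafification preserves monomorphisms, which finishes the proof.

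You instead check on Nisnevich stalks. You prove that $\pi_0\esc{X}(X^h_x)\to\pi_0\esc{X}(\eta_{X^h_x})$ has trivial kernel at every henselization, and use that a Nisnevich sheaf is separated by these stalks. Because you work with kernels throughout, the commutative square comparing $\eta_{X^h_x}$ with $\eta_X$ is all you need.

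What each route buys:
\begin{enumerate}
\item The paper's route is two lines and reuses the Zariski corollary wholesale. It tacitly uses that $V\mapsto\pi_0\esc{X}(\eta_V)$ is already a Nisnevich sheaf on $Et_X$, so that sheafifying the target does not change its value at $X$. This is true, since the presheaf is pushed forward from the generic point, where Nisnevich covers split.
\item Your route avoids that check and yields a stronger by-product, the henselian-local analogue of \Cref{zar stalk in generic pi0}. The cost is the limit bookkeeping for $X^h_x$ and its fraction field.
\end{enumerate}

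Two simplifications to your write-up:
\begin{enumerate}
\item You need not rerun Gabber's argument or worry about its compatibility with limits. Apply the statement of \Cref{zar stalk in generic pi0} to each connected étale neighbourhood $(V,v)$ of $(X,x)$ and pass to the filtered colimit. A filtered colimit of pointed maps with trivial fibre again has trivial fibre.
\item Drop the garbled phrase about composing with an ``injection'' relating $\pi_0\esc{X}(\eta_X)$ and $\pi_0\esc{X}(\eta_{X^h_x})$. That restriction is neither needed nor injective in general. For example, with $\mathrm{char}\,k\neq 2$, $1+t$ becomes a square in the henselization of $\mathbb{A}^1_k$ at $0$, so its class in $K^M_1/2$ dies there.
\end{enumerate}
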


\begin{proof}
After restricting to $Et_X$, we may assume that all morphisms are smooth. We are thus dealing with the global sections of the Nisnevich sheafification of $(\pi_0^{zar}\esc{X})_{|Et_X}$ and the birational (pre)sheaf $\pi_0^{}\esc{X}(\eta_{-})_{|Et_X}$. Because the map is injective for Zariski sheaves and restriction and sheafification preserve injective maps, we are done.
\end{proof}
In the next theorem, we shall use the following definition:
\begin{defn}
    A presheaf $F$ of sets on $Sm_k$ is said to be weakly unramified, if for every dense open immersion $U\subset X$ in $Sm_S$ the induced restriction map $FX\to FU$ is injective.
\end{defn}
\begin{thm}\label{pi0 of grouplike is weak unram}
  For any projective motivic $H$-group-space $\esc{X}$, one has that $\pi_0^{nis}\esc{X}$ is weakly unramified. In particular, for any grouplike space $\esc{Y}$, the projective homotopy path component $\pi_0^{\mathbb{P}^1}\esc{Y}$ is weakly unramified. 
\end{thm}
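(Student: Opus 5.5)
The plan is to deduce weak unramifiedness from the injectivity statement just proved, namely that for a projective motivic $H$-group-space $\esc{X}$ and connected $X\in Sm_k$ the map $\pi_0^{nis}\esc{X}(X)\to \pi_0^{nis}\esc{X}(\eta_X)$ is injective. Let $U\subset X$ be a dense open immersion in $Sm_k$. Since $\pi_0^{nis}\esc{X}$ is a Nisnevich (in particular Zariski) sheaf and $X$ is a finite disjoint union of its connected components, I first reduce to $X$ connected. For a dense $U$, each component of $X$ meets $U$ in a dense, connected open, so the reduction is compatible with restriction to $U$.

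For connected $X$, $U$ is connected with the same generic point $\eta_X=\eta_U$, and the composite
\[
\pi_0^{nis}\esc{X}(X)\to \pi_0^{nis}\esc{X}(U)\to \pi_0^{nis}\esc{X}(\eta_X)
\]
is the canonical map to the generic point. By the preceding corollary this composite is injective, hence so is its first factor $\pi_0^{nis}\esc{X}(X)\to\pi_0^{nis}\esc{X}(U)$. This proves the first assertion.

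For the second assertion, let $\esc{Y}$ be a grouplike space, meaning an $H$-group-space. Then $L_{pmot}\esc{Y}$ is again an $H$-group-space, and by definition $\pi_0^{\mathbb{P}^1}\esc{Y}=\pi_0^{nis}L_{pmot}\esc{Y}$, so the first part applies. To see that $L_{pmot}$ preserves the $H$-group structure, I use that $L_{pmot}$ commutes with finite products, which is \Cref{pmot cartesian localization}: $L_{pmot}$ is cartesian. Hence the multiplication and inverse of $\esc{Y}$ transport to $L_{pmot}\esc{Y}$ with the same homotopy identities. More structurally, a cartesian localization induces a localization on monoid objects preserving grouplikeness, since $\pi_0$ of the localization is a quotient of $\pi_0$ by \Cref{P1 connectivity theorem}.

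The only delicate point is the book-keeping in these reductions: the disconnected case, where dense opens meet every component, and the identification of the map at the generic point with the one in the corollary. Both should be routine, so I expect no real obstacle beyond what the previous corollary already supplies.
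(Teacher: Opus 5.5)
Your proposal is correct and follows the paper's proof essentially verbatim. The paper also factors the map to the generic point through $\pi_0^{nis}\esc{X}(U)$ and applies the preceding injectivity corollary; it handles disconnected $X$ using the fact that Nisnevich sheaves take disjoint unions to products, and it obtains the second claim from the cartesianness of $L_{pmot}$ (\Cref{pmot cartesian localization}).
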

\begin{proof}
    If $X\in Sm_S$ is connected, then for any dense open $U\subset X$, the induced morphism is injective because the diagonal in the diagram below is injective by the corollary just above:
    \[
    \xymatrix{
    \pi_0^{nis}\esc{X}(X)\ar[r]\ar[dr]& \pi_0^{nis}\esc{X}(U)\ar[d]\\
    &\pi_0^{nis}\esc{X}(\eta_X)
    }
    \]
    Since Nisnevich sheaves take disjoint unions to products, this works even for non-connected smooth schemes.

    For the last statement, note that since $L_{pmot}$ is a cartesian localization \Cref{pmot cartesian localization}, $L_{pmot}\esc{Y}$ is grouplike when $\esc{Y}$ is so.
\end{proof}

\underline{\textbf{$\mathbb{P}^1$-connected spaces}}
\begin{defn}
    For an $S$-space $\esc{X}$ we denote by $$\pi_{0}^{ch,\mathbb{P}^1}\esc{X}:=\pi_0^{nis}L_{\mathbb{P}^1}\esc{X}$$ and call it the $\mathbb{P}^1$-chain connected component sheaf of $\esc{X}$. Additionally, we define the directly $\mathbb{P}^1$-connected component of $\esc{X}$ by the co equalizer of the following diagram:
\[
    \xymatrix{
    \pi_0^{nis}\esc{X}^{\mathbb{P}^1}\ar@<-.5ex>[r]_-{0}\ar@<.5ex>[r]^-1&\pi_0^{nis}\esc{X}\ar[r]& \pi_0^{dir, \mathbb{P}^1}\esc{X}}\]
    induced by the two sections $0,1$.
\end{defn}
\begin{rem}\label[rem]{der a1 is ch a1}
    One could similarly define the notion of a directly $\mathbb{A}^1$-connected component. However, since $\mathbb{A}^1$ has an interval structure, this is unnecessary. In other words, $\pi_0^{der,\mathbb{A}^1}\to \pi_0^{ch,\mathbb{A}^1}$ is an isomorphism.
\end{rem}

\begin{defn}
    We say that an $S$-space $\esc{X}$ is:
    \begin{enumerate}
        \item  $\mathbb{P}^1$-connected if $\pi_0^{\mathbb{P}^1}\esc{X}$ is trivial.
        \item $\mathbb{P}^1$-chain connected if $\pi_{0}^{ch,\mathbb{P}^1}\esc{X}$ trivial.
        \item Weakly $\mathbb{P}^1$-chain connected if $\pi_{0}^{ch,\mathbb{P}^1}\esc{X}$ trivial on $\mathcal{F}_k$, i.e., for every finitely generated separable extension $L/k$ $$\pi_0(L_{\mathbb{P}^1}\esc{X}(L))=*.$$
    \end{enumerate}
\end{defn}
\begin{rem}
    By \Cref{model for pmot localization} there is a canonical epimorphism of sheaves $$\pi_0^{ch,\mathbb{P}^1}\esc{X}\to \pi_0^{\mathbb{P}^1}\esc{X}.$$ It follows that $\mathbb{P}^1$-chain connected spaces are $\mathbb{P}^1$-connected. In fact, when the base field $k$ is perfect, \Cref{P1 connected on F_k} yields that weakly $\mathbb{P}^1$-chain connected $k$-spaces are $\mathbb{P}^1$-connected.
\end{rem}
In the following result, we say that a variety $Y$ is directly rationally connected if every two rational points of $Y$ are connected by a $\mathbb{P}^1$. We say that $Y$ is entirely rationally connected if for every $L\in \mathcal{F}_k$ $Y_K$ is directly rationally connected, i.e. $\pi_0^{dir,\mathbb{P}^1}\esc{X}$ is trivial on $\mathcal{F}_k$.

\begin{prop}
    Entirely rationally connected varieties are weakly $\mathbb{P}^1$-chain connected. Thus, over a perfect field, such varieties are  $\mathbb{P}^1$-connected.
\end{prop}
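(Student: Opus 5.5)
The plan is to compare, sectionwise on fields $L\in\mathcal{F}_k$, the directly $\mathbb{P}^1$-connected components with the $\mathbb{P}^1$-chain connected components, using the explicit colimit model of \Cref{model for P1 localization}. Let $Y$ be entirely rationally connected and $L\in\mathcal{F}_k$. Since $\operatorname{Spec}L$ is a Nisnevich point (an essentially smooth henselian local scheme), stalks of Nisnevich sheafifications can be computed there. Hence $\pi_0^{ch,\mathbb{P}^1}Y(L)=\pi_0(L_{\mathbb{P}^1}Y(L))$ and $\pi_0^{dir,\mathbb{P}^1}Y(L)$ is the coequalizer of $Y(\mathbb{P}^1_L)\rightrightarrows Y(L)$ along the two sections $0,1$.

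First, by \Cref{presheaf pi0 p1 connectivity} and the weak terminal lemma, the map $Y(L)=\pi_0 Y(L)\to \pi_0L_{\mathbb{P}^1}Y(L)=\colim_{(\delta^n_{\mathbb{P}}\times L)^{op}}Y(\delta^n_{\mathbb{P}}\times L)$ is surjective. It therefore suffices to show that any two $L$-points $a,b\in Y(L)$ become equal in this colimit.

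By hypothesis $Y_L$ is directly rationally connected, so there is a morphism $f:\mathbb{P}^1_L\to Y$ with $f(0)=a$ and $f(1)=b$. (Here I use that the coequalizer being trivial means $a,b$ are joined by a finite chain of such $\mathbb{P}^1$'s; a single step suffices by induction.) Now $f\in Y(\mathbb{P}^1_L)$ is an object of the indexing diagram, namely $\delta^1_{\mathbb{P}}\times L$ over $L$. The two sections $0,1:L\to\mathbb{P}^1_L$ are morphisms in the slice diagram $\mathcal{A}_L$ from $L=\delta^0\times L$ to $\delta^1\times L$ over $L$; the diagram includes all maps over $L$, not just the face maps. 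By the cocone condition, the images of $0^*f=a$ and $1^*f=b$ in the colimit both equal the image of $f$, so $a=b$ in $\pi_0L_{\mathbb{P}^1}Y(L)$. Thus $\pi_0^{ch,\mathbb{P}^1}Y(L)=*$ for all $L\in\mathcal{F}_k$, which is weak $\mathbb{P}^1$-chain connectedness. Nonemptiness of $Y(L)$ is part of the hypothesis that $Y_L$ is directly rationally connected; if that convention is not granted, this is a point to flag.

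For the second claim over a perfect field, I will invoke the Remark following the definition of $\mathbb{P}^1$-connected spaces. It gives the epimorphism $\pi_0^{ch,\mathbb{P}^1}\to\pi_0^{\mathbb{P}^1}$ coming from \Cref{model for pmot localization}, so $\pi_0^{\mathbb{P}^1}Y(L)=*$ for all $L\in\mathcal{F}_k$. Then \Cref{P1 connected on F_k}, applied to the projective motivic space $L_{pmot}Y$, yields $\pi_0^{\mathbb{P}^1}Y=*$.

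The main obstacle I expect is the identification of stalks at $\operatorname{Spec}L$, since $L$ is only essentially smooth. One has to pass from $\pi_0$ of sections over finite type models to the colimit over open subschemes, using that $\pi_0$ and $L_{\mathbb{P}^1}$ commute with filtered colimits (the latter by the colimit formula, since the diagram $\mathcal{A}_Y$ is compatible with base change along opens). A secondary issue is that \Cref{P1 connected on F_k} is stated for fields only up to this continuity, so that step should be checked in the same way.
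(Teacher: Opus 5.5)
Your proposal is correct and follows essentially the paper's own argument: the paper likewise uses the colimit model of \Cref{model for P1 localization} and the surjection $\pi_0\esc{X}(L)\to\pi_0 L_{\mathbb{P}^1}\esc{X}(L)$, notes that points are identified through maps from $(\mathbb{P}^1_L)^n$, and observes that for entirely rationally connected varieties this already happens at $n=1$; the perfect-field claim then comes from the preceding remark together with \Cref{P1 connected on F_k}. Your additional care about chains, the stalk at $\operatorname{Spec}L$, and non-emptiness only fills in details the paper leaves implicit. Non-emptiness is in fact supplied by the paper's phrasing that $\pi_0^{dir,\mathbb{P}^1}$ is trivial on $\mathcal{F}_k$.
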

\begin{proof}
   For the first statement, one has to show that for every finitely generated separable extension $L/k$ $$\pi_0(L_{\mathbb{P}^1}\esc{X}(L))=*.$$ Using the formula of $L_{\mathbb{P}^1}$ (\Cref{model for P1 localization}) and methods of the proof of \Cref{model for P1 localization} it is clear that the epimorphism $$\pi_0\esc{X}(L)\to \pi_0(L_{\mathbb{P}^1}\esc{X}(L))$$ is explicitely given by identifying points of $\esc{X}(L)$ throug sections from $(\mathbb{P}^1_L)^n$ to $\esc{X}$. For entirely rationally connected varieties, this happens already at the level $n=1$.
\end{proof}

\begin{lem}
    There is a span of comparison maps:
    $$\pi_0^{ch,\mathbb{A}^1}\esc{X}\leftarrow \pi_0^{dir,\mathbb{P}^1}\esc{X}\to \pi_0^{ch,\mathbb{P}^1}\esc{X}.$$
\end{lem}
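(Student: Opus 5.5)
Both maps will be built from the universal property of the coequalizer defining $\pi_0^{dir,\mathbb{P}^1}\esc{X}$. It is enough to give maps out of $\pi_0^{nis}\esc{X}$ that coequalize the two arrows $0,1\colon \pi_0^{nis}\esc{X}^{\mathbb{P}^1}\rightrightarrows \pi_0^{nis}\esc{X}$. For the two targets I take the canonical maps $\pi_0^{nis}\esc{X}\to \pi_0^{nis}L_{\mathbb{A}^1}\esc{X}=\pi_0^{ch,\mathbb{A}^1}\esc{X}$ and $\pi_0^{nis}\esc{X}\to \pi_0^{nis}L_{\mathbb{P}^1}\esc{X}=\pi_0^{ch,\mathbb{P}^1}\esc{X}$, both induced by the units of the localizations. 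I will check the coequalizing condition at the presheaf level and then sheafify, since $L_{nis}$ preserves coequalizers.

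\textbf{The map to $\pi_0^{ch,\mathbb{P}^1}$.} I will write the two evaluation maps $\esc{X}^{\mathbb{P}^1}\to \esc{X}$ as $\mathrm{ev}_0,\mathrm{ev}_1$, induced by the sections $0,1\colon S\to\mathbb{P}^1_S$. Postcomposing with the unit $\eta\colon\esc{X}\to L_{\mathbb{P}^1}\esc{X}$ and using naturality, I get $\mathrm{ev}_i\circ\eta^{\mathbb{P}^1}$, where $\eta^{\mathbb{P}^1}\colon\esc{X}^{\mathbb{P}^1}\to (L_{\mathbb{P}^1}\esc{X})^{\mathbb{P}^1}$. Because $L_{\mathbb{P}^1}\esc{X}$ is $\mathbb{P}^1$-local, the constant map $c\colon L_{\mathbb{P}^1}\esc{X}\to (L_{\mathbb{P}^1}\esc{X})^{\mathbb{P}^1}$ is an equivalence; by Yoneda this is exactly $\mathbb{P}^1$-locality applied sectionwise to $Y\times\mathbb{P}^1\to Y$. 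Both $\mathrm{ev}_0$ and $\mathrm{ev}_1$ are left inverses of $c$, so each is the inverse equivalence $c^{-1}$, and hence they are homotopic. After applying $\pi_0$ the two composites agree. This gives the right-hand map.

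\textbf{The map to $\pi_0^{ch,\mathbb{A}^1}$.} I need a map $\esc{X}^{\mathbb{P}^1}\to\esc{X}^{\mathbb{A}^1}$ compatible with evaluation. Restriction along $j\colon\mathbb{A}^1\hookrightarrow\mathbb{P}^1$ (the complement of $\infty$) does this, since $0,1$ factor through $\mathbb{A}^1$. So $\mathrm{ev}_i^{\mathbb{P}^1}=\mathrm{ev}_i^{\mathbb{A}^1}\circ j^*$. The previous argument with $\mathbb{A}^1$ in place of $\mathbb{P}^1$ shows that $\mathrm{ev}_0^{\mathbb{A}^1}$ and $\mathrm{ev}_1^{\mathbb{A}^1}$ become homotopic after composing with $\esc{X}\to L_{\mathbb{A}^1}\esc{X}$. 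Therefore $\pi_0^{nis}\esc{X}\to\pi_0^{ch,\mathbb{A}^1}\esc{X}$ also coequalizes $0$ and $1$, which gives the left-hand map. Alternatively, this map factors through $\pi_0^{dir,\mathbb{A}^1}\cong\pi_0^{ch,\mathbb{A}^1}$ by \Cref{der a1 is ch a1}.

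\textbf{Main obstacle.} The only delicate step is the homotopy $\mathrm{ev}_0\simeq\mathrm{ev}_1$ on $(L\esc{X})^{I}$. The internal mapping space must be used correctly: $\esc{X}^{\mathbb{P}^1}(Y)=\esc{X}(Y\times\mathbb{P}^1)$, and locality says exactly that the pullback along the projection is an equivalence. I will also note that both maps are compatible with the canonical maps out of $\pi_0^{nis}\esc{X}$. Both are epimorphisms, because $\pi_0^{nis}\esc{X}$ surjects onto $\pi_0^{ch,\mathbb{P}^1}\esc{X}$ (\Cref{presheaf pi0 p1 connectivity}) and onto $\pi_0^{ch,\mathbb{A}^1}\esc{X}$ via the singular model.
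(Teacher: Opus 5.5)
Your proof is correct. The left-hand map is built exactly as in the paper. You restrict along $j_\infty\colon\mathbb{A}^1\hookrightarrow\mathbb{P}^1$ to get a map of parallel pairs into $\pi_0^{nis}\esc{X}^{\mathbb{A}^1}\rightrightarrows\pi_0^{nis}\esc{X}$. You then land in $\pi_0^{ch,\mathbb{A}^1}\esc{X}$ either via \Cref{der a1 is ch a1}, which is the paper's choice, or via $\mathbb{A}^1$-locality of $L_{\mathbb{A}^1}\esc{X}$.

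For the right-hand map your route differs. The paper uses the explicit colimit formula of \Cref{model for P1 localization}. It notes that the pair of sections $0,1\colon Y\rightrightarrows\mathbb{P}^1_Y$ is a subdiagram of $(\delta^\bullet_{\mathbb{P}^1_S}\times_S Y)^{op}$, so the cocone condition forces the coequalization. You instead use only that $L_{\mathbb{P}^1}\esc{X}$ is $\mathbb{P}^1$-local, so that $\mathrm{ev}_0\simeq c^{-1}\simeq\mathrm{ev}_1$ on $(L_{\mathbb{P}^1}\esc{X})^{\mathbb{P}^1}$.

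Your version is model-free. It does not need Hoyois's formula and works verbatim for any $\mathbb{P}^1$-local target under $\esc{X}$; for instance, it gives the map to $\pi_0^{\mathbb{P}^1}\esc{X}$ directly. The paper's version instead exhibits the map concretely as the inclusion of length-one $\mathbb{P}^1$-chains into chains through the $(\mathbb{P}^1)^n$. That is the description used right afterwards for entirely rationally connected and proper varieties.

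The two constructions give the same map, since the unit $\esc{X}\to L_{\mathbb{P}^1}\esc{X}$ is the colimit inclusion at the vertex $\mathrm{id}_Y$.
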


\begin{proof}
    The leftward map constructed by choosing an embedding $j:\mathbb{A}^1\subset \mathbb{P}^1$, say as the complement of $\infty$. Indeed, by \Cref{der a1 is ch a1} the map is induced by the following map of diagrams:
    \[
    \xymatrix{
    \pi_0^{nis}\esc{X}^{\mathbb{P}^1}\ar@<-.5ex>[r]_-{0}\ar@<.5ex>[r]^-1\ar[d]_{j_\infty^*}&\pi_0^{nis}\esc{X}\ar@{=}[d]^{id}\\
    \pi_0^{nis} \esc{X}^{\mathbb{A}^1}\ar@<-.5ex>[r]_-0\ar@<.5ex>[r]^-1&\pi_0^{nis}\esc{X}
    }
    \]
    The rightward map, on the other hand, is obtained by the inclusion of the smaller subdiagram defining $\pi_0^{der,\mathbb{P}^1}\esc{X}$ into the larger diagram $ \pi_0^{nis}\esc{X}(\delta^\bullet_{\mathbb{P}_S^1}\times_S -)$ defining $\pi_0^{ch,\mathbb{P}^1}\esc{X}$.
\end{proof}
\begin{prop}
    For a proper variety $X/k$, the comparison maps from the lemma above are isomorphisms in one variable, yielding an isomorphism $$\pi_0^{ch,\mathbb{A}^1}X (L)\to \pi_0^{ch,\mathbb{P}^1}X(L)$$ for every $L\in \mathcal{F}_k.$ It follows that the canonical maps $$\pi_0^{ch, \mathbb{P}^1}X\to \pi_0^{\mathbb{P}^1}X\to \pi_0^{b\mathbb{A}^1}X$$ are isomorphisms on $\mathcal{F}_k$. Thus, there are equivalences     
    $$\pi_0^{\mathbb{A}^1}X (L)\simeq \pi_0^bX(L)\simeq   \pi_0^{\mathbb{P}^1}X(L).$$ 
\end{prop}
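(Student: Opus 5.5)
The plan is to exploit rigidity of morphisms from $\mathbb{P}^1$ and $\mathbb{A}^1$ into a proper variety over a field $L\in\mathcal{F}_k$.

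\textbf{Step 1: comparison maps on $L$-points.} The key input is the valuative criterion of properness. Any morphism $\mathbb{A}^1_L\to X$ extends uniquely to $\mathbb{P}^1_L\to X$, because $\mathbb{P}^1_L$ is a regular curve and $X$ is proper. Hence restriction along $j_\infty$ gives a bijection $X(\mathbb{P}^1_L)\to X(\mathbb{A}^1_L)$, compatible with the sections $0,1$. Evaluating the diagram in the proof of the lemma above at $L$ (stalks at fields are computed sectionwise), the map of coequalizer diagrams is an isomorphism in the left column. This gives $\pi_0^{dir,\mathbb{P}^1}X(L)\cong \pi_0^{ch,\mathbb{A}^1}X(L)$, where the target uses \Cref{der a1 is ch a1}. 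In particular this is just the set $X(L)$ modulo $\mathbb{A}^1$-chain equivalence.

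\textbf{Step 2: the rightward map is bijective on $L$-points.} Surjectivity holds because, by the model of \Cref{model for P1 localization} and \Cref{presheaf pi0 p1 connectivity}, $\pi_0L_{\mathbb{P}^1}X(L)$ is a quotient of $X(L)$. Two $L$-points become identified exactly when they are connected through the values of some $f:(\mathbb{P}^1_L)^n\to X$ at the various corner sections. For injectivity I would show that every such identification already factors through chains of single $\mathbb{P}^1$'s. Concretely, two corner points of $(\mathbb{P}^1_L)^n$ differing in one coordinate are joined by a coordinate line $\mathbb{P}^1_L\hookrightarrow(\mathbb{P}^1_L)^n$, and any two $L$-points of $(\mathbb{P}^1_L)^n$ are joined by a chain of such lines. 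So the equivalence relation generated by all $n$ equals the one generated by $n=1$.

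I expect this to be the main obstacle. The colimit also involves maps between the $\delta^n$ other than the face-type ones, and one must check that the $\pi_0$-colimit is genuinely the quotient of $X(L)$ by these chain relations. I would handle this with the weakly terminal object argument of \Cref{weak terminal lemma}: it identifies the colimit as the quotient of $\pi_0X(L)=X(L)$ by the relation $f\circ s\sim f\circ s'$, where $f$ ranges over $X((\mathbb{P}^1_L)^n)$ and $s,s'$ over $L$-sections. Combining Steps 1 and 2 yields $\pi_0^{ch,\mathbb{A}^1}X(L)\cong\pi_0^{ch,\mathbb{P}^1}X(L)$.

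\textbf{Step 3: pass to the sheaves.} For proper $X$, known results (Asok--Morel, and the birational comparison of \cite{0bat}) give that on $\mathcal{F}_k$ the sheaf $\pi_0^{b\mathbb{A}^1}X$, and likewise $\pi_0^{\mathbb{A}^1}X$, agrees with $X(L)/R$, where $R$ is $\mathbb{A}^1$-chain equivalence, i.e.\ with $\pi_0^{ch,\mathbb{A}^1}X(L)$. The composite
\[
\pi_0^{ch,\mathbb{P}^1}X\to\pi_0^{\mathbb{P}^1}X\to\pi_0^{b\mathbb{A}^1}X
\]
is well defined because birational local spaces are $\mathbb{P}^1$-local. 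Both maps are surjective on $\mathcal{F}_k$: the first is the epimorphism remarked above, and the second is induced from the surjection from $X(L)$. Their composite is identified with the bijection from Step 2, so both maps are bijective on $\mathcal{F}_k$. This yields $\pi_0^{\mathbb{A}^1}X(L)\simeq\pi_0^bX(L)\simeq\pi_0^{\mathbb{P}^1}X(L)$.
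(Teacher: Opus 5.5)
Your proposal is correct and follows the paper's proof. The valuative criterion gives $X(\mathbb{P}^1_L)\cong X(\mathbb{A}^1_L)$ for the leftward comparison, identifications through $(\mathbb{P}^1_L)^n$ are reduced to identifications through single curves for the rightward one, and the sheaf-level claims come from the epimorphism $\pi_0^{ch,\mathbb{P}^1}X\to\pi_0^{\mathbb{P}^1}X$ combined with Asok--Morel and the comparison $\pi_0^b\simeq\pi_0^{b\mathbb{A}^1}$ of \cite{0bat}. The only variation is in the middle step: you join two $L$-points of $(\mathbb{P}^1_L)^n$ by a chain of coordinate lines, which needs no properness there, though you should say explicitly that an automorphism of $\mathbb{P}^1_L$ moves $0,1$ to the relevant coordinates. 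The paper instead joins them by a single $\mathbb{A}^1$ and then extends to $\mathbb{P}^1$ using properness of $X$.
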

\begin{proof}
    The equivalence $$\pi_0^{ch,\mathbb{A}^1}{X}(L)\leftarrow \pi_0^{dir,\mathbb{P}^1}{X}(L)$$ follows from the uniqueness of the valuative criteria of proper schemes. Indeed, from this criterion, we have an isomorphism, $$\mathrm{Map}(\mathbb{P}^1_L,X)\xrightarrow{j_\infty}\mathrm{Map}(\mathbb{A}^1_L,X).$$ For the other equivalence, we use the fact that identifying points of ${X}(L)$ through sections from $(\mathbb{P}^1_L)^n$ to ${X}$ lifts to an identification by an $\mathbb{A}^1$ and thus through $\mathbb{P}^1$ by properness (here the key of the argument is that there is always an $\mathbb{A}^1$ passing through two distinct points in $(\mathbb{P}^1)^n$. Since $\pi_0^{ch, \mathbb{P}^1}X\to \pi_0^{\mathbb{P}^1}X$ is an epimorphism, the rest of the claim follows from [\cite{asok2011smooth}, Proposition 6.2.6]. While the last claim uses the isomorphism $\pi_0^bX\simeq \pi_0^{b\mathbb{A}^1}X$ from [\cite{0bat}, Theorem 3.6.15].
\end{proof}
It follows that,
\begin{cor}
A proper variety over $X$, over a perfect field $k$, is $\mathbb{P}^1$-connected iff it is birationally connected iff it is $\mathbb{A}^1$-connected.
\end{cor}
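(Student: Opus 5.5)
The statement to prove is: a proper variety $X$ over a perfect field $k$ is $\mathbb{P}^1$-connected iff it is birationally connected iff it is $\mathbb{A}^1$-connected. My plan is to reduce each of the three connectivity notions to a condition on finitely generated separable field extensions $L\in\mathcal{F}_k$. Then I will use the preceding proposition, which says these three $\pi_0$'s agree on $\mathcal{F}_k$.

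\textbf{Step 1: $\mathbb{P}^1$-connectivity.} By definition $X$ is $\mathbb{P}^1$-connected iff $\pi_0^{nis}L_{pmot}X$ is trivial. Since $L_{pmot}X$ is a projective motivic space, \Cref{P1 connected on F_k} applies: it is perfectness of $k$ that lets one use \Cref{zar stalk in generic pi0} at generic points. So $\mathbb{P}^1$-connectivity is equivalent to $\pi_0^{\mathbb{P}^1}X(L)=*$ for all $L\in\mathcal{F}_k$.

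\textbf{Step 2: $\mathbb{A}^1$-connectivity.} Here I invoke Morel's unstable $\mathbb{A}^1$-connectivity theorem over perfect fields. It gives that $\pi_0^{\mathbb{A}^1}X$ is $\mathbb{A}^1$-invariant and unramified, so triviality of $\pi_0^{\mathbb{A}^1}X$ can be tested on $\mathcal{F}_k$. Concretely, an unramified sheaf injects into its values at generic points of connected smooth schemes, and it is trivial once it is trivial on function fields.

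\textbf{Step 3: birational connectivity.} $\pi_0^bX$ is a birational sheaf, so its value on a connected smooth $Y$ equals its value at the generic point $\eta_Y$. Hence triviality can again be checked on $\mathcal{F}_k$. Alternatively, I can cite the identification $\pi_0^b\simeq\pi_0^{b\mathbb{A}^1}$ from [\cite{0bat}] together with the reduction to fields established there.

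\textbf{Step 4: combine.} With all three conditions reduced to $\mathcal{F}_k$, the equivalences $\pi_0^{\mathbb{A}^1}X(L)\simeq\pi_0^bX(L)\simeq\pi_0^{\mathbb{P}^1}X(L)$ from the preceding proposition finish the proof.

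\textbf{Main obstacle.} I expect the delicate point to be Step 2: knowing that $\pi_0^{\mathbb{A}^1}$ of a proper variety is determined on fields. If the unramifiedness of $\pi_0^{\mathbb{A}^1}$ in general is not available, I would first try to route through [\cite{asok2011smooth}, Proposition 6.2.6], which for proper $X$ identifies $\pi_0^{\mathbb{A}^1}X(L)$ with $\pi_0^{ch,\mathbb{A}^1}X(L)$ and with $\pi_0^bX(L)$. I would then use the birational side, where reduction to fields is formal, to deduce the $\mathbb{A}^1$ statement via the comparison epimorphism $\pi_0^{\mathbb{A}^1}X\to\pi_0^{b\mathbb{A}^1}X$. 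Note that an epimorphism onto a trivial sheaf does not by itself give triviality of the source. So the genuine input needed from the literature is that $\pi_0^{\mathbb{A}^1}X$ is trivial as soon as its sections over all $L\in\mathcal{F}_k$ are trivial.
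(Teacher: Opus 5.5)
Your overall plan is the paper's. You reduce each notion of connectedness to a statement on $\mathcal{F}_k$ and compare there using the preceding proposition. The paper does the same, citing \Cref{P1 connected on F_k}, [\cite{bachmann2024strongly}, Corollary 2.10] and [\cite{0bat}, Corollary 3.6.22]. Steps 1, 3 and 4 are fine, but the reason you give for Step 2 is wrong. Morel's unstable connectivity theorem only says that $\pi_0^{nis}X\to\pi_0^{\mathbb{A}^1}X$ is an epimorphism and that $L_{mot}$ preserves connectivity. It does not say that $\pi_0^{\mathbb{A}^1}X$ is $\mathbb{A}^1$-invariant or unramified. The $\mathbb{A}^1$-invariance of $\pi_0^{\mathbb{A}^1}$ is Morel's conjecture, not a consequence of the connectivity theorem, and it fails in general (Ayoub's counterexamples). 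Nor does the theorem give the injectivity of $\pi_0^{\mathbb{A}^1}X(V)\to\pi_0^{\mathbb{A}^1}X(\eta_V)$ that you use. Your fallback through [\cite{asok2011smooth}, Proposition 6.2.6] and the birational side does not repair this, as you note yourself. So Step 2, as written, is unsupported.

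The input you isolate at the end is exactly what is needed. It holds for every $\mathbb{A}^1$-local Nisnevich-local space over a perfect field, with no invariance or unramifiedness hypothesis. It is the $\mathbb{A}^1$-analogue of \Cref{P1 connected on F_k}, and the paper uses it via [\cite{bachmann2024strongly}, Corollary 2.10]. Its proof is the Gabber-lemma argument that \Cref{zar stalk in generic pi0} adapts, run with $\mathbb{A}^1_S$ in place of $\mathbb{P}^1_S$. Take $\esc{Y}=L_{mot}X$ and a point $v$ of a connected $V\in Sm_k$. The argument shows that $\pi_0\esc{Y}(V_v)\to\pi_0\esc{Y}(\eta_V)$ has trivial fibre over any base point coming from $\esc{Y}(k)$.

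A trivial fibre, rather than injectivity, is all you need. Suppose $\pi_0\esc{Y}(L)=*$ for all $L\in\mathcal{F}_k$. Then $\esc{Y}(k)\neq\emptyset$, so choose a point there. Every class in $\pi_0\esc{Y}(V_v)$ maps to the base point at $\eta_V$, hence lies in the trivial fibre. So $\pi_0\esc{Y}(V_v)=*$, and therefore $\pi_0^{nis}\esc{Y}=*$.

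With this in place of Step 2, your proof is complete and agrees with the paper's. The only remaining difference is how you get the birational equivalence. You use the preceding proposition together with the formal field-detection for birational sheaves, whereas the paper quotes [\cite{0bat}, Corollary 3.6.22].
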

\begin{proof}
    Using \Cref{P1 connected on F_k} and [\cite{bachmann2024strongly}, Corollary 2.10] this follows from [\cite{0bat}, Corollary 3.6.22].
\end{proof}

\subsection{$\mathbb{P}^1$-invariance of $\pi_i^{\mathbb{P}^1}$}
\begin{defn}
    Over a field $k$, we say that a presheaf $F$ is weakly $\mathbb{P}^1$-invariant if, for all $K\in \mathcal{F}_k$, the canonical map $F(K)\to F(\mathbb{P}^1_K)$ is an isomorphism. 
\end{defn}
\begin{rem}\label[rem]{remark on weak p1 invariance}
\begin{enumerate}
    \item Choosing a section $K\xrightarrow[]{\infty}\mathbb{P}^1_K\to K$ shows that the canonical map $F(K)\to F(\mathbb{P}^1_K)$ is injective. Thus, weak $\mathbb{P}^1$-invariance is equivalent to the map $F(K)\to F(\mathbb{P}^1_K)$ being surjective. Conversely, this is equivalent to the map $F(\mathbb{P}^1_K)\to F(K)$ being injective.
    \item If $F$ is weakly $\mathbb{P}^1$-invariant and is a $\sqcup$-sheaf (i.e., it takes disjoint unions to products), then for any essentially smooth scheme $U$ of dimension $0$, the map $F(U)\to F(\mathbb{P}^1_U)$ is an isomorphism (for, after all, such a $U$ is a disjoint union of fields $K\in \mathcal{F}_k$). \end{enumerate}
\end{rem}

\begin{prop}\label[prop]{weakly p1 + weakly unramified => p1 invariant}
    A weakly unramified, weakly $\mathbb{P}^1$-invariant presheaf $G$ is $\mathbb{P}^1$-invariant. 
\end{prop}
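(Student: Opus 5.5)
By \Cref{p1 invariance remark}, it suffices to show that for every $X\in Sm_k$ the pullback $\infty^*:G(\mathbb{P}^1_X)\to G(X)$ along the section at infinity is injective. Equivalently, any $\alpha\in G(\mathbb{P}^1_X)$ equals $q^*\infty^*\alpha$, where $q:\mathbb{P}^1_X\to X$ is the projection. The plan is to reduce this identity to the generic point, where weak $\mathbb{P}^1$-invariance applies, and use weak unramifiedness to come back.

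First I reduce to $X$ connected. For a $\sqcup$-sheaf this is immediate. For a general presheaf, I instead run the argument below separately on each connected component $X_i$, using the dense open $\bigsqcup_i \mathbb{P}^1_{X_i}\subset\mathbb{P}^1_X$, which is in fact all of $\mathbb{P}^1_X$. So assume $X$ is connected with generic point $\eta$ and function field $K=k(X)$. Since $k$ is a field and $X$ is smooth, $K$ is finitely generated and separable over $k$, so $K\in\mathcal{F}_k$. Then $\mathbb{P}^1_X$ is connected and irreducible, and $\mathbb{P}^1_K$ is its generic fiber. Set $\beta=\alpha-q^*\infty^*\alpha$; the actual argument will compare the two elements $\alpha$ and $q^*\infty^*\alpha$, since $G$ carries no group structure.

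Next I pass to the generic point. Restricting to $\mathbb{P}^1_K$, the element $\alpha|_{\mathbb{P}^1_K}$ lies in $G(\mathbb{P}^1_K)$. By weak $\mathbb{P}^1$-invariance it equals $q_K^*\gamma$ for some $\gamma\in G(K)$, and necessarily $\gamma=\infty_K^*\alpha|_{\mathbb{P}^1_K}=(\infty^*\alpha)|_K$. Hence $\alpha$ and $q^*\infty^*\alpha$ agree on $\mathbb{P}^1_K$. Here $G(\mathbb{P}^1_K)$ is understood as the colimit $\colim_{U} G(\mathbb{P}^1_U)$ over dense opens $U\subset X$, and $G(K)=\colim_U G(U)$. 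The identification uses the compatibility of $q$ and $\infty$ with these colimits, which is formal.

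The key step, and the main obstacle, is descending this equality from $\mathbb{P}^1_K$ to $\mathbb{P}^1_X$. Equality in the colimit means there is a dense open $U\subset X$ with $\alpha|_{\mathbb{P}^1_U}=(q^*\infty^*\alpha)|_{\mathbb{P}^1_U}$. Since $\mathbb{P}^1_U\subset\mathbb{P}^1_X$ is a dense open immersion in $Sm_k$, weak unramifiedness makes $G(\mathbb{P}^1_X)\to G(\mathbb{P}^1_U)$ injective, so $\alpha=q^*\infty^*\alpha$. The subtle point is that weak $\mathbb{P}^1$-invariance is stated only for fields in $\mathcal{F}_k$, which are not themselves objects of $Sm_k$. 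So I must interpret $G(K)$ and $G(\mathbb{P}^1_K)$ as the continuous extension of $G$ to essentially smooth schemes, namely the colimit over dense opens of $X$, and check that this agrees with the convention used in the definition. If the definition instead treats $K$ as a limit over all models, the same cofinality argument with the dense opens $U\subset X$ still works, because every model of $K$ is dominated by some open of $X$ after shrinking.
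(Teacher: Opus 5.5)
Your core argument is the paper's. The paper forms the commutative square with rows $G(\mathbb{P}^1_U)\to G(\mathbb{P}^1_K)$ and $G(U)\to G(K)$, whose columns are restriction along a section. The top row is injective by weak unramifiedness, and the right column is bijective by weak $\mathbb{P}^1$-invariance. Hence the left column is injective. Your elementwise version (compare $\alpha$ with $q^*\infty^*\alpha$ on $\mathbb{P}^1_K$, then on some $\mathbb{P}^1_U$, then on $\mathbb{P}^1_X$) is the same proof. The stray $\beta=\alpha-q^*\infty^*\alpha$ means nothing without a group structure and should be deleted.

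The step that fails is your reduction to connected $X$ for a general presheaf. Running the argument on each component only gives $\alpha|_{\mathbb{P}^1_{X_i}}=(q^*\infty^*\alpha)|_{\mathbb{P}^1_{X_i}}$ for every $i$. To conclude, you would need $G(\mathbb{P}^1_X)\to\prod_i G(\mathbb{P}^1_{X_i})$ to be injective. Observing that $\bigsqcup_i\mathbb{P}^1_{X_i}$ is all of $\mathbb{P}^1_X$ does not supply this, since weak unramifiedness controls restriction to dense opens, not to components.

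This gap cannot be closed, because the statement is false for such presheaves. Let $V(X):=\prod_{i\neq j}\mathrm{Hom}_k(X_i\times_k X_j,\mathbb{P}^1)$, the product over ordered pairs of distinct connected components of $X$. Define pullback along $f\colon Y\to X$ by composing with $f\times f$ on pairs of components of $Y$ with distinct images, and by the constant map $0$ on pairs with equal image.
\begin{itemize}
\item One checks directly that $V$ is a presheaf.
\item $V$ is weakly unramified. A dense open does not change $\pi_0$ of a smooth scheme, $U_i\times_k U_j$ is dense in the reduced scheme $X_i\times_k X_j$, and $\mathbb{P}^1$ is separated.
\item $V$ is weakly $\mathbb{P}^1$-invariant, since it is a point on every connected scheme and hence on $K$ and $\mathbb{P}^1_K$.
\item $V$ is not $\mathbb{P}^1$-invariant: $V(\mathrm{Spec}\,k\sqcup\mathrm{Spec}\,k)=\mathbb{P}^1(k)^2$, while $V(\mathbb{P}^1_k\sqcup\mathbb{P}^1_k)=\mathrm{Hom}_k(\mathbb{P}^1\times_k\mathbb{P}^1,\mathbb{P}^1)^2$.
\end{itemize}

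So the proposition needs $G$ to be a $\sqcup$-sheaf, or at least separated for finite disjoint unions. In that case your first remark ("for a $\sqcup$-sheaf this is immediate") is the whole reduction. The paper's proof carries the same tacit restriction, since it simply takes $U$ connected. This is harmless there, because the result is only applied to the Nisnevich sheaf $\pi_0^{nis}\esc{G}$.
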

\begin{proof}
 Let $U$ be a smooth, connected scheme with generic point $\eta=Spec K$. It suffices to show that the map $ G(\mathbb{P}^1_U)\to F(U)$ is injective. Consider the following commutative square:
    \[
    \xymatrix{ 
    G(\mathbb{P}^1_U)\ar[r]\ar[d]&G(\mathbb{P}^1_K)\ar[d]\\
    G(U)\ar[r]&G(K) }
    \]
    The right vertical map is an isomorphism by weak $\mathbb{P}^1$-invariance, whereas the top horizontal map is injective by weak unramifiedness. Indeed, $\mathbb{P}^1_K\subset \mathbb{P}^1_U$ is an essentially smooth dense open. We conclude that the left vertical map is injective, as required.
\end{proof}
\begin{thm}\label{pi 1 p1 is weak unram}
For all $i\geq 1$ and any space $\esc{X}$, the higher projective homotopy groups $\pi_i^{\mathbb{P}^1}\esc{X}$ are weakly unramified.
\end{thm}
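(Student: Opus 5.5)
The plan is to reduce to \Cref{pi0 of grouplike is weak unram} by looping. Set $\esc{Y}:=L_{pmot}\esc{X}$, a projective motivic space, and fix a local section $x$ of $\esc{Y}$ over some $X\in Sm_k$ to serve as base point. Homotopy sheaves are defined as sheaves over the base, that is, over $Sm_X$, so it is cleanest to work over the (variable) base $X$. Since $\mathcal{H}^{\mathbb{P}^1}$ is compatible with smooth base change ($p^*$ preserves projective motivic spaces, as noted in \S3.1), the restriction $\esc{Y}_{|Sm_X}$ is again a projective motivic space, now pointed by $x$.

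Next I form the iterated loop space $\Omega^{i}_x\esc{Y}$. Limits of $\mathbb{P}^1$-local Nisnevich-local spaces are again $\mathbb{P}^1$-local and Nisnevich-local, because $\mathcal{H}^{\mathbb{P}^1}\subset\mathcal{P}$ is a localization and hence closed under limits. It follows that $\Omega^{i}_x\esc{Y}$ is a projective motivic space. For $i\ge 1$ it is moreover an $H$-group space. Because $\Omega$ commutes with Nisnevich sheafification of homotopy sheaves, in the sense that $\pi_0^{nis}\Omega^i_x\esc{Y}\cong\pi_i^{nis}(\esc{Y},x)$, I obtain
$$\pi_i^{\mathbb{P}^1}(\esc{X},x)\cong\pi_0^{nis}\Omega^i_x L_{pmot}\esc{X}.$$
Applying \Cref{pi0 of grouplike is weak unram} to the projective motivic $H$-group space $\Omega^i_x\esc{Y}$ (over the base $X$, or over $k$ when $x$ is a global point) then shows that the right-hand side is weakly unramified.

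The main obstacle is the base-point bookkeeping. The homotopy sheaves $\pi_i^{\mathbb{P}^1}\esc{X}$ live over $\pi_0$, so weak unramifiedness has to mean the following: for a dense open $U\subset V$ in $Sm_X$ and a base point defined on $V$, the map $\pi_i(V)\to\pi_i(U)$ is injective. This is exactly the statement for the pointed space $\esc{Y}_{|Sm_X}$, which is why the earlier results are needed over the smooth base $X$ rather than over $k$. Theorem \ref{zar stalk in generic pi0} and its corollaries were stated over $k$, but their proofs use only Gabber's presentation lemma for smooth schemes over a field, and $V\in Sm_X$ is itself smooth over $k$. To bypass the issue, I would instead regard $\Omega^i$ as the fiber of $\esc{Y}^{S^i}\to\esc{Y}$ over the section $x\colon X\to\esc{Y}$. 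The fiberwise group $\Omega^i_{\esc{Y}}\esc{Y}:=\esc{Y}^{S^i}\times_{\esc{Y}^{S^i_{\mathrm{pt}}}}\esc{Y}$ is a group object over $\esc{Y}$. Running the argument of \Cref{zar stalk in generic pi0} for maps $V\to\esc{Y}$ lifted along this group then gives injectivity on fibers, using the group structure exactly as in the corollaries following \Cref{zar stalk in generic pi0}, and this yields the claim.
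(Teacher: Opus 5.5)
Your core argument is the paper's proof. The paper simply applies \Cref{pi0 of grouplike is weak unram} to the grouplike projective motivic space $\Omega^i L_{pmot}\esc{X}$, tacitly for a base point in $L_{pmot}\esc{X}(k)$. Your first two paragraphs correctly spell out the steps the paper leaves implicit: closure of $\mathcal{H}^{\mathbb{P}^1}$ under limits, and $\pi_0^{nis}\Omega^i_x\esc{Y}\cong\pi_i^{nis}(\esc{Y},x)$.

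Your extra treatment of local base points goes beyond the paper and does not work as written. The excision step in the proof of \Cref{zar stalk in generic pi0} identifies fibres over a base point on $\mathbb{P}^1_{S_1}\setminus Z$, where the Gabber base $S_1$ does not lie over your base $X$. The restriction to $X_3\setminus Z$ of a local base point need not come from $\esc{Y}(\mathbb{P}^1_{S_1}\setminus Z)$. So neither rerunning the proof over $Sm_X$ nor passing to the fibrewise group $\esc{Y}^{S^i}\to\esc{Y}$ supplies what the argument needs. The local case would require a genuinely new argument; state the result for global base points, as the paper implicitly does.
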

\begin{proof}
Apply the above theorem \Cref{pi0 of grouplike is weak unram} to the grouplike space $\Omega^i L_{pmot}\esc{X}$.
\end{proof}
\begin{prop}\label[prop]{pi0 p1 is weakly p1 invariant}
Let $\esc{X}$ be a projective motivic space. Then $\pi_0^{nis}\esc{X}$ is weakly $\mathbb{P}^1$-invariant.
\end{prop}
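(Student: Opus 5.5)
The plan is to reduce to the surjectivity statement of \Cref{remark on weak p1 invariance}(1). For $K\in\mathcal{F}_k$, the section $\infty$ already makes $\pi_0^{nis}\esc{X}(K)\to \pi_0^{nis}\esc{X}(\mathbb{P}^1_K)$ injective. So it suffices to show that every section $\alpha\in \pi_0^{nis}\esc{X}(\mathbb{P}^1_K)$ comes from $\pi_0\esc{X}(\mathbb{P}^1_K)$. Since $\esc{X}$ is $\mathbb{P}^1$-local, $\pi_0\esc{X}(\mathbb{P}^1_K)\cong \pi_0\esc{X}(K)$, and since $\operatorname{Spec}K$ is a Nisnevich point, $\pi_0\esc{X}(K)=\pi_0^{nis}\esc{X}(K)$. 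Thus the whole statement comes down to the assertion that on the curve $\mathbb{P}^1_K$ the comparison map $\pi_0\esc{X}(\mathbb{P}^1_K)\to\pi_0^{nis}\esc{X}(\mathbb{P}^1_K)$ is surjective.

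First step: put the cover in a standard form. A section $\alpha$ lifts to $\pi_0\esc{X}$ on some Nisnevich cover. Because $\mathbb{P}^1_K$ is one-dimensional, I will refine this cover to consist of a dense open $V\subset \mathbb{P}^1_K$ on which $\alpha$ lifts to some $a_V\in\pi_0\esc{X}(V)$, together with finitely many étale neighbourhoods $W_i\to \mathbb{P}^1_K$ of the closed points $x_i\in \mathbb{P}^1_K\setminus V$. Each $W_i$ is an isomorphism over $x_i$ and carries a lift $a_i$. After shrinking the $W_i$ we may assume the $x_i$ are their only points over the complement of $V$. The $x_i$ can then be handled one at a time through elementary Nisnevich squares
\[
W_i\setminus x_i\to W_i,\qquad V\to V\cup\{x_i\}.
\]
Since $\esc{X}$ is Nisnevich local, each square gives a homotopy pullback of mapping spaces. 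On $\pi_0$ this yields that a pair $(a_V,a_i)$ glues to a class on $V\cup\{x_i\}$ as soon as the restrictions of $a_V$ and $a_i$ to $W_i\setminus x_i$ agree in $\pi_0$, and not merely after further Nisnevich refinement. After finitely many such gluings we land in $\pi_0\esc{X}(\mathbb{P}^1_K)$, and this class maps to $\alpha$.

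Second step, which I expect to be the main obstacle: the restrictions of $a_V$ and $a_i$ are only known to agree after passing to a further Nisnevich cover of $W_i\setminus x_i$. Since $W_i\setminus x_i$ is again a smooth curve over $K$, the same refinement brings us down to agreement on a dense open $W_i'\subset W_i\setminus x_i$. Here I would use the Bloch--Ogus--Gabber input of \Cref{zar stalk in generic pi0}, applied with basepoint the common value, to upgrade agreement at the generic point to agreement on a Zariski neighbourhood of each point of $W_i\setminus x_i$.

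This trivial-fibre statement gives equality only in the pointed sense, and for an arbitrary projective motivic space $\esc{X}$ there is no group structure to translate one section onto the other. Since $\esc{X}$ itself is not an $H$-group, I would handle this by applying \Cref{zar stalk in generic pi0} to the based path space $\esc{X}(-)\times_{\esc{X}(-)\times\esc{X}(-)}*$ over $W_i$. This path space is again projective motivic, because $L_{pmot}$ is locally cartesian by \Cref{pmot cartesian localization}. Then the fact that the two sections are path-connected at the generic point propagates to the local rings. If this works, $W_i\setminus x_i$ can be shrunk Zariski-locally so that the two sections agree in $\pi_0$, the squares in the first step glue, and the proposition follows.

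As a fallback, I would instead build the lift directly from the value of $\alpha$ at the rational point $\infty$. The class $\alpha(\infty)$ comes from $\pi_0\esc{X}(K)$; pull it back along $\mathbb{P}^1_K\to K$ to get $\beta$, and compare $\alpha$ with $\beta$ stalkwise. Here the trivial-fibre property at every henselian local ring of $\mathbb{P}^1_K$ would replace the gluing argument above.
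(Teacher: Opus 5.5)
\textbf{Gap in the second step.} Your reduction is the paper's: everything comes down to surjectivity of $\pi_0\esc{X}(\mathbb{P}^1_K)\to\pi_0^{nis}\esc{X}(\mathbb{P}^1_K)$. Your first step is also sound: a dense open $V$ plus \'etale neighbourhoods $W_i$ of the finitely many missing points, glued one at a time along elementary Nisnevich squares, using that $\pi_0$ of a pullback surjects onto the fibre product of the $\pi_0$'s.

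The problem is the second step, where you upgrade generic agreement to local agreement. \Cref{zar stalk in generic pi0} cannot supply this. It only controls the fibre over a basepoint of $\esc{X}$ defined over $k$. Its proof transports that fibre across the cartesian square through $\esc{X}(\mathbb{P}^1_{S_1}\setminus Z)$, so the basepoint must be defined there, not merely on $W_i$ or at its generic point. The path space between your two sections has no basepoint unless they already agree. What you need from it is non-emptiness over local rings given non-emptiness at the generic point, and a trivial-fibre statement says nothing about that.

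In fact the upgrade is false for general projective motivic spaces. Take the Nisnevich sheaf represented by the affine line with doubled origin $L$. It is $\mathbb{P}^1$-invariant: the composite $\mathbb{P}^1_T\to L\to\mathbb{A}^1$ factors through $T$; each connected fibre over the zero locus goes to a single origin; and properness of $\mathbb{P}^1_T\to T$ then lets the map descend to $T$. Yet its two sections over $\mathrm{Spec}\,\mathcal{O}_{\mathbb{A}^1,0}$ agree at the generic point and differ at the closed point. The paper obtains this kind of unramifiedness only for $H$-groups (\Cref{pi0 of grouplike is weak unram}). Your fallback has the same defect and is also circular: nothing tells you that $\alpha$ and $\beta$ agree at the generic point of $\mathbb{P}^1_K$.

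\textbf{The repair.} The Bloch--Ogus--Gabber input is unnecessary, because you are free to shrink $W_i$ further. Take $W_i$ affine and let $w_i$ be its point over $x_i$. Once $a_V$ and $a_i$ agree in $\pi_0$ on a dense open $W_i'\subset W_i\setminus w_i$ (which your refinement already gives), the complement $(W_i\setminus w_i)\setminus W_i'$ is a finite set of closed points of the curve $W_i$, none equal to $w_i$. Replace $W_i$ by the open subscheme $W_i'\cup\{w_i\}$. This is still an \'etale neighbourhood of $x_i$ whose only point outside the preimage of $V$ is $w_i$. The two restrictions now agree on all of $(W_i'\cup\{w_i\})\setminus\{w_i\}=W_i'$, so your gluing goes through and the proof is complete.

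With this change, your argument is a hands-on proof, for $\mathbb{P}^1_K$, of the fact the paper simply cites. Since $K$ and $\mathbb{P}^1_K$ have Nisnevich homotopy dimension $\le 1$, the maps $[K,\esc{X}]\to[K,\tau^{nis}_{\le 0}\esc{X}]$ and $[\mathbb{P}^1_K,\esc{X}]\to[\mathbb{P}^1_K,\tau^{nis}_{\le 0}\esc{X}]$ are surjective by [\cite{bachmann2024strongly}, Lemma 2.17]. Combined with the $\mathbb{P}^1$-invariance of $\esc{X}$, that is the paper's entire proof.
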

\begin{proof}
By \Cref{remark on weak p1 invariance}, we have to show that for any $K\in \mathcal{F}_k$ the map $$\tau_{\leq 0}^{nis}\esc{X}(K)\to  \tau_{\leq 0}^{nis}\esc{X}(\mathbb{P}^1_K)$$ is surjective.  We have a commutative square of Nisnevich homotopy classes:
\[
\xymatrix{
[K,\esc{X}]\ar[r]\ar[d]^{\simeq }&[K,\tau_{\leq 0}^{nis}\esc{X}]\ar[d]\\
[\mathbb{P}^1_K, \esc{X}]\ar[r]&[\mathbb{P}^1_K, \tau_{\leq 0}^{nis}\esc{X}]
}
\]
    Both the top and bottom arrows are surjective by [\cite{bachmann2024strongly}, Lemma 2.17] and the fact that the homotopy dimension of both $\mathbb{P}^1_K$ and $K$ is $\leq 1$. The left vertical map is an isomorphism because of the $\mathbb{P}^1$-invariance of $\esc{X}$. 
\end{proof}
\begin{cor}\label[cor]{pi0 of grouplike is p1 invariant}
For a projective motivic $H$-group-space $\esc{G}$,  $\pi_0^{nis}\esc{G}$ is $\mathbb{P}^1$-invariant.
\end{cor}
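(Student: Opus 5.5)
The plan is to combine the two ingredients just established and feed them into \Cref{weakly p1 + weakly unramified => p1 invariant}. Set $G:=\pi_0^{nis}\esc{G}$. Since $\esc{G}$ is a projective motivic $H$-group-space, \Cref{pi0 of grouplike is weak unram} shows that $G$ is weakly unramified: for every dense open immersion $U\subset X$ in $Sm_k$, the restriction $G(X)\to G(U)$ is injective. Since $\esc{G}$ is in particular a projective motivic space, \Cref{pi0 p1 is weakly p1 invariant} shows that $G$ is weakly $\mathbb{P}^1$-invariant, i.e.\ $G(K)\to G(\mathbb{P}^1_K)$ is bijective for every $K\in\mathcal{F}_k$. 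Then \Cref{weakly p1 + weakly unramified => p1 invariant} applies and gives $\mathbb{P}^1$-invariance of $G$.

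Before invoking it, I would check that its proof really uses weak unramifiedness in the generality needed. That proof evaluates $G$ on the essentially smooth schemes $\mathbb{P}^1_K$ and $K=\eta_U$, and uses injectivity of $G(\mathbb{P}^1_U)\to G(\mathbb{P}^1_K)$. This is a restriction to a cofiltered limit of dense opens $\mathbb{P}^1_V\subset\mathbb{P}^1_U$, with $V\subset U$ nonempty open. So I would interpret $G$ on essentially smooth schemes as the colimit over such opens. Because filtered colimits of injections are injective, weak unramifiedness on $Sm_k$ then yields injectivity of $G(\mathbb{P}^1_U)\to G(\mathbb{P}^1_K)$. I would also confirm that $\pi_0^{nis}\esc{G}(K)$ for $K\in\mathcal{F}_k$ is computed by this same colimit, so that the value used in \Cref{pi0 p1 is weakly p1 invariant} agrees with the one used here.

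Finally, the reduction to connected $U$ holds because $G$ is a Nisnevich sheaf, hence a $\sqcup$-sheaf, and both $U\mapsto G(U)$ and $U\mapsto G(\mathbb{P}^1_U)$ send finite disjoint unions to products. Injectivity of $G(\mathbb{P}^1_U)\to G(U)$ along a section (say $\infty$), together with the retraction noted in \Cref{p1 invariance remark}, gives bijectivity of $G(U)\to G(\mathbb{P}^1_U)$.

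I expect the main obstacle to be the compatibility of values on essentially smooth schemes (generic points and $\mathbb{P}^1_K$) with the continuity of $\pi_0^{nis}$. I would handle it by the standard fact that Nisnevich sheaf values on such limits are the filtered colimits over smooth models, together with the injectivity-preservation argument above.
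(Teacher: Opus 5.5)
Your proposal is correct and matches the paper's proof: weak $\mathbb{P}^1$-invariance comes from \Cref{pi0 p1 is weakly p1 invariant}, weak unramifiedness from \Cref{pi0 of grouplike is weak unram}, and the conclusion from \Cref{weakly p1 + weakly unramified => p1 invariant}. Your extra check is sound: weak unramifiedness extends to the essentially smooth inclusion $\mathbb{P}^1_K\subset\mathbb{P}^1_U$ because a filtered colimit of injections is injective, which makes explicit a point the paper leaves implicit.
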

\begin{proof}
Applying \Cref{pi0 p1 is weakly p1 invariant} to $\esc{G}$, we find that $\pi_0^{nis}\esc{G}$ is weakly $\mathbb{P}^1$-invariant. For an $H$-group-space $\esc{G}$, the weak unramifiedness of $\pi_0^{nis}\esc{G}$ is given by Proposition \Cref{pi0 of grouplike is weak unram}. The statement then follows at once from \Cref{weakly p1 + weakly unramified => p1 invariant}.
\end{proof}
\begin{cor}
Let $\esc{X}$ be a projective motivic space. Then for all $i\geq 1$, $\pi_i^{nis}\esc{X}$ is $\mathbb{P}^1$-invariant. 
\end{cor}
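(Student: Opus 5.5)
The plan is to reduce to \Cref{pi0 of grouplike is p1 invariant} applied to iterated loop spaces, exactly as sketched in the introduction. The homotopy sheaves $\pi_i^{nis}\esc{X}$ depend on a base point, so we work with a pointed projective motivic space $(\esc{X},x)$ with $x\in \esc{X}(S)$. When $\esc{X}$ is not connected, or when base points are only local sections, we restrict to the appropriate slice: for a section $x\in \esc{X}(U)$ over $U\in Sm_k$ we pass to $Sm_U$. There $p^*$ preserves projective motivic spaces by the discussion of $L_{pmot}p_\sharp \dashv p^*$, and $\mathbb{P}^1$-invariance over $U$ is a special case of the condition over $k$.

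\textbf{Step 1: the loop spaces are $H$-group projective motivic spaces.} For $i\geq 1$ set $\esc{G}_i:=\Omega^i_{x}\esc{X}$, computed sectionwise in $\mathcal{P}(S)$. Nisnevich-local objects are closed under limits, and so are $\mathbb{P}^1$-local objects: the condition that $\esc{Y}(U)\to\esc{Y}(\mathbb{P}^1_U)$ be an equivalence is stable under limits of diagrams, and the point $*$ is local. Hence $\esc{G}_i$ is again a projective motivic space, and it is an $H$-group (indeed grouplike $E_i$).

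\textbf{Step 2: identify homotopy sheaves.} Since $\Omega$ commutes with the presheaf homotopy-group functors and Nisnevich sheafification is exact, we have
\[
\pi_0^{nis}\esc{G}_i\cong L_{nis}\pi_0(\Omega^i_x\esc{X})\cong L_{nis}\pi_i(\esc{X},x)=\pi_i^{nis}(\esc{X},x).
\]
This identification is compatible with restriction along $\mathbb{P}^1_U\to U$.

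\textbf{Step 3: apply the corollary.} By \Cref{pi0 of grouplike is p1 invariant}, $\pi_0^{nis}\esc{G}_i$ is $\mathbb{P}^1$-invariant, so $\pi_i^{nis}(\esc{X},x)$ is $\mathbb{P}^1$-invariant for every base point. In particular, for an arbitrary space $\esc{Y}$ this gives $\mathbb{P}^1$-invariance of $\pi_i^{\mathbb{P}^1}\esc{Y}=\pi_i^{nis}L_{pmot}\esc{Y}$.

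The main obstacle is bookkeeping with base points. The corollary is stated for globally pointed $H$-groups over the base, whereas a section of $\pi_i^{nis}$ over $\mathbb{P}^1_U$ lives over a base point in $\esc{X}(\mathbb{P}^1_U)$. We need every such base point to come from $U$. This holds because $\esc{X}(U)\simeq \esc{X}(\mathbb{P}^1_U)$, so every point over $\mathbb{P}^1_U$ is, up to path, pulled back from $U$. We can then base change to $Sm_U$ and apply the corollary there, which requires the ingredients of \Cref{pi0 of grouplike is weak unram} to hold over the perfect base used in that section. Over a field base this is only a minor check, but it is where I would be careful.
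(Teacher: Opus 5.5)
Your proposal is correct and follows the paper's proof. The paper simply applies \Cref{pi0 of grouplike is p1 invariant} to the grouplike projective motivic space $\Omega^i\esc{X}$, leaving implicit both the closure of projective motivic spaces under loops and the identification $\pi_0^{nis}\Omega^i\esc{X}\cong\pi_i^{nis}\esc{X}$; your Steps 1 and 2 make these explicit.

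Your additional base-point bookkeeping goes beyond what the paper addresses. It is unnecessary for a globally pointed $\esc{X}$. Moreover, the proposed passage to $Sm_U$ would take you outside the field setting in which the ingredients of \Cref{pi0 of grouplike is weak unram} are proved, so treat it as an optional refinement rather than part of the argument.
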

\begin{proof}
Apply \Cref{pi0 of grouplike is p1 invariant} above to the grouplike projective motivic space $\Omega^i \esc{X}$.
\end{proof}
\begin{cor}\label[cor]{p1 inv pf pi_i^p1}
For any $i\geq 1$ and any space $\esc{X}$, the projective homotopy groups $\pi_i^{\mathbb{P}^1}\esc{X}$ are $\mathbb{P}^1$-invariant.
\end{cor}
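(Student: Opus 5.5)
The plan is to reduce the statement to the immediately preceding corollary, applied to a projective motivic space. By definition $\pi_i^{\mathbb{P}^1}\esc{X}=\pi_i^{nis}L_{pmot}\esc{X}$, and $L_{pmot}\esc{X}$ is by construction $\mathbb{P}^1$-local and Nisnevich-local, i.e.\ it lies in $\mathcal{H}^{\mathbb{P}^1}(S)$. So it suffices to apply the previous corollary (that $\pi_i^{nis}\esc{Y}$ is $\mathbb{P}^1$-invariant for every projective motivic space $\esc{Y}$ and every $i\geq 1$) with $\esc{Y}=L_{pmot}\esc{X}$.

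The one point needing care is the base point. The sheaves $\pi_i^{nis}$ are defined relative to a choice of base point, or more intrinsically as sheaves over $\pi_0^{nis}$. I will work with a pointed space $\esc{X}$, or with a section of $L_{pmot}\esc{X}$ over an arbitrary $U\in Sm_S$ after passing to the slice. I will then observe that $L_{pmot}$ of a pointed space is canonically pointed, via the unit $\esc{X}\to L_{pmot}\esc{X}$.

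Next I check that the loop spaces $\Omega^i L_{pmot}\esc{X}$, taken in $\mathcal{P}(S)$, are again projective motivic. They are Nisnevich-local, since Nisnevich-local objects are closed under limits. They are $\mathbb{P}^1$-local, since $\mathbb{P}^1$-local objects are also closed under limits: the defining condition $\esc{Y}(U)\simeq\esc{Y}(\mathbb{P}^1_U)$ is preserved by pullbacks of presheaves. Each $\Omega^i L_{pmot}\esc{X}$ is moreover an $H$-group-space, because it is a loop space. Hence \Cref{pi0 of grouplike is p1 invariant} gives that $\pi_0^{nis}\Omega^i L_{pmot}\esc{X}$ is $\mathbb{P}^1$-invariant.

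Finally, I identify $\pi_0^{nis}\Omega^i\esc{Y}$ with $\pi_i^{nis}\esc{Y}$, computing at the chosen base point. This holds because presheaf homotopy groups of a sectionwise loop space are computed sectionwise, and Nisnevich sheafification commutes with this identification.

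I expect no serious obstacle. The only mild subtlety is the base-point dependence. This is handled because $\mathbb{P}^1$-invariance is a sectionwise condition over $U\to\mathbb{P}^1_U$ and can be checked for each base point in $L_{pmot}\esc{X}(U)$, which pulls back to $\mathbb{P}^1_U$ along the projection. If the base field is required to be perfect, this requirement is inherited from the earlier results, since those feed into \Cref{pi0 of grouplike is p1 invariant} through \Cref{pi0 of grouplike is weak unram}.
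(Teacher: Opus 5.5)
Your proposal is correct and follows the paper's argument exactly. The paper proves this by applying the preceding corollary to the projective motivic space $L_{pmot}\esc{X}$, and proves that corollary by applying \Cref{pi0 of grouplike is p1 invariant} to the grouplike projective motivic space $\Omega^i$ of it, just as you unfold it. One caveat on your base-point remarks, which go beyond what the paper addresses: the cited results are stated on $Sm_k$, so they directly support the case of a global base point, whereas loops at a point defined only over some $U$ would be presheaves on $Sm_U$, and the field-based arguments would have to be re-run there.
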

\begin{proof}
Apply the above corollary to the projective motivic space $L_{pmot}\esc{X}$.
\end{proof}
\subsection{Strong/strict $\mathbb{P}^1$-invariance}
\begin{defn}
     A Nisnevich sheaf $G$ of groups is said to be 
    \begin{enumerate}
        \item $i$-strictly $\mathbb{P}^1$-invariant if $\mathrm{H}_{nis}^{j}(-;G)$ is $\mathbb{P}^1$-invariant for all $j\leq i$ (for $i\geq 2$, this requires $G$ to be abelian).
        \item strongly $\mathbb{P}^1$-invariant if it is $1$-strictly $\mathbb{P}^1$-invariant. In detail, this means that ${H}^1_{nis}(-;G)$ and $G$ are $\mathbb{P}^1$-invariant. 
        \item strictly $\mathbb{P}^1$-invariant if it is $\infty$-strictly $\mathbb{P}^1$-invariant. This means that $G$ is abelian and that $\mathrm{H}_{nis}^{i}(-;G)$ is $\mathbb{P}^1$-invariant for all $i$. 
    \end{enumerate}
\end{defn}
\begin{rem}
$G$ is $i$-strictly $\mathbb{P}^1$-invariant if and only if $\mathrm{B}_{nis}^jG$ is $\mathbb{P}^1$-local for all $j\leq i$. In particular, when $i=1$, this is precisely the discrete case of \Cref{strong p1 monoid space}.
\end{rem}
\begin{ex}[$\mathbb{A}^1$ is strictly $\mathbb{P}^1$ invariant]
    Since all higher Zariski cohomologies of $\mathbb{P}^1_S$ with respect to the coherent sheaf $\mathcal{O}_{\mathbb{P}^1}=\mathcal{O}_{\mathbb{P}^1}(0)$ are trivial [\cite[\href{https://stacks.math.columbia.edu/tag/01XX}{Lemma 01XX}]{stacks-project}], we may conclude the claim Zariski locally by the Leray spectral sequence. But we know that for quasicoherent sheaves, Zariski and Nisnevich cohomologies agree [\cite[\href{https://stacks.math.columbia.edu/tag/03P2}{Theorem 03P2}]{stacks-project}], so the claim follows.
\end{ex}

Just as in the $\mathbb{A}^1$ case, one may wish that a $\mathbb{P}^1$-invariant sheaf of groups be automatically strongly $\mathbb{P}^1$-invariant. The $\mathbb{A}^1$-homotopical counterpart was disproved by [\cite{choudhury2014connectivity}]. Unfortunately, the $\mathbb{P}^1$-case is even worse, in the sense that the counterexample is simpler. 
\vspace{.3 cm}
\begin{coex}[$\mathbb{G}_m$ is $\mathbb{P}^1$-invariant but not strongly $\mathbb{P}^1$-invariant]
Indeed, we have seen that $\mathbb{G}_m$ is an affine scheme [\Cref{affine algebras are projective local}] and is $\mathbb{P}^1$-local. However, the counterexample \Cref{Gm is not strong p1 local} shows that $\mathbb{G}_m$ is not strongly $\mathbb{P}^1$-invariant.
\end{coex}

Thus, in the $\mathbb{P}^1$ case as well, it is not sufficient to have the $\mathbb{P}^1$-invariance of higher $\mathbb{P}^1$-homotopy groups to obtain their strong/strict $\mathbb{P}^1$-invariance. 

In the case of $\mathbb{A}^1$ localization, we know that, at least over perfect fields, all the $i$-th homotopy group sheaves are $i$-strictly $\mathbb{A}^1$-invariant. In the $\mathbb{P}^1$-local case, we have so far only been able to deduce $\mathbb{P}^1$-invariance, not the strong $\mathbb{P}^1$-invariance. Most of Morel's $\mathbb{A}^1$-local computations showing 'strongness' can be carried forward to our setup as well; the only missing part is the purity theorem, and at this point I am unable to even state an appropriate version of the $\mathbb{P}^1$-local purity theorem. In the next subsection, we shall discuss the $\mathbb{P}^1$-local version of another key result of Morel that relies on purity.
\subsection{Rational contraction}
Let $\mathbb{A}$ denote the pointed sheaf $(\mathbb{A}^1, 0)$. Let $X\subset \mathbb{P}^1_k$ be an open subscheme with structure map $p_X: X\to SpecK$ and a chosen rational point $x: Spec k\to X$. We shall denote by $\mathbb{X}: =(X,x)$ the space $h_k(X)$ pointed by $x$. 
\begin{defn}
\begin{enumerate}
    \item     Given a pointed space $\esc{A}$, the $\mathbb{X}$-contraction is given by $$\esc{A}_{-\mathbb{X}}:=\esc{A}^{\mathbb{A}}=\Omega_\mathbb{X}\esc{A}.$$ 
    \item When $A$ is a sheaf of pointed sets, $A_{-\mathbb{X}}$ can be identified with the sheaf $$U\mapsto ker(A(\mathbb{X}_U)\xrightarrow[]{0}A(U))$$
We shall denote this by $A^{-1}$ when $\mathbb{X}=\mathbb{A}$.
\end{enumerate}
\end{defn}
\begin{obs}
    \begin{enumerate}
        \item If $\esc{X}$ is grouplike, then so is $\esc{X}_{-\mathbb{X}}$. In particular, when $F$ is a sheaf of (abelian) groups, then so is $F^{-1}$.
        \item When $\esc{X}$ is local with respect to a class of morphisms, then so is $\esc{X}_{-\mathbb{X}}$. In particular, when $\esc{X}$ is an affine or projective motivic space, so is $\esc{X}$.
        \item If $F$ is a sheaf of pointed sets, then $F^{-1}$ is a retract of the unpointed mapping space $F^{\mathbb{A}^1}$. When $F$ is a sheaf of abelian groups, this implies that $F^{-1}$ is a summand of $F(\mathbb{X}\times -)$.
    \end{enumerate}
\end{obs}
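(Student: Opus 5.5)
The three items are formal, and I would verify each one directly from the definition $\esc{A}_{-\mathbb{X}}=\Omega_{\mathbb{X}}\esc{A}$, the pointed internal mapping space. Throughout I would use the fibre sequence
\[
\Omega_{\mathbb{X}}\esc{A}\to \esc{A}^{X}\xrightarrow{x^*}\esc{A},
\]
where $\esc{A}^X=\underline{\mathrm{Map}}(h_k(X),\esc{A})$ and $x^*$ is evaluation at the rational point $x$. Since the section $x$ splits $p_X$, the map $p_X^*:\esc{A}\to\esc{A}^X$ is a section of $x^*$.

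For (1), I would note that $\Omega_{\mathbb{X}}=\underline{\mathrm{Map}}_\bullet(\mathbb{X},-)$ is a right adjoint and therefore preserves finite products. It consequently carries (grouplike) monoid objects to (grouplike) monoid objects. Grouplikeness can be tested on $\pi_0$, where it follows because the shear map $(a,b)\mapsto(a,ab)$ remains an equivalence after applying a product-preserving functor. When $F$ is a sheaf of (abelian) groups, the description
\[
F^{-1}(U)=\ker\bigl(F(\mathbb{X}_U)\xrightarrow{x^*}F(U)\bigr)
\]
exhibits it as the kernel of a group homomorphism, so it is a subgroup, and it is abelian whenever $F$ is.

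For (2), I would first prove that $\esc{A}^{X}$ is local whenever $\esc{A}$ is local with respect to a class of morphisms stable under $-\times X$. The relevant classes are the $\mathbb{A}^1$- or $\mathbb{P}^1$-projections $Y\times\mathbb{P}^1\to Y$ and Nisnevich squares, and both are stable under product with a smooth $X$. The locality then follows from
\[
\mathrm{Map}(Y,\esc{A}^X)=\mathrm{Map}(Y\times X,\esc{A}).
\]
Next, local objects are closed under limits in $\mathcal{P}(S)$, and $\Omega_{\mathbb{X}}\esc{A}$ is the fibre of a map between local objects, so it is local as well. This shows that $\esc{X}_{-\mathbb{X}}$ is a projective (resp. affine) motivic space whenever $\esc{X}$ is. The statement's final ``so is $\esc{X}$'' is a typo for $\esc{X}_{-\mathbb{X}}$.

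For (3), in the discrete case I would write down the retraction explicitly. The map
\[
F(\mathbb{X}_U)\to F^{-1}(U),\qquad s\mapsto s\cdot p_X^*x^*(s)^{-1},
\]
retracts the inclusion. For a general pointed set, where there is no group structure, I would instead use the composite
\[
F^{-1}\hookrightarrow F^{X}\to F^{-1}
\]
obtained from the split fibre sequence; I am least sure this composite really is a retraction. In the abelian case, the split short exact sequence
\[
0\to F^{-1}\to F(X\times -)\to F\to 0,
\]
split by $p_X^*$, gives $F(X\times -)\simeq F\oplus F^{-1}$, so $F^{-1}$ is a summand. The main obstacle is not mathematical. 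It lies in the notation: the statement writes $\esc{A}^{\mathbb{A}}$ and $F^{\mathbb{A}^1}$ where $X$ is meant, so I would read these as $X$ or $\mathbb{X}$ and specialize to $X=\mathbb{A}^1$ only when the statement explicitly says so.
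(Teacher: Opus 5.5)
The paper states this observation without proof, so I am judging your argument on its own. Items (1) and (2) are fine. You are right that (2) needs the class of morphisms to be stable under $-\times X$; it fails for arbitrary classes. The group and abelian cases of (3) are also fine.

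The gap is the first sentence of (3), for a sheaf of pointed sets with no group structure, which is exactly where you hesitated. The split fibre sequence only gives $F^{-1}\hookrightarrow F^X$, the map $x^*\colon F^X\to F$, and its section $p_X^*$. It gives nothing from $F^X$ back to $F^{-1}$. Your formula $s\mapsto s\cdot p_X^*x^*(s)^{-1}$ is precisely where the group law is used, and without it the claim is false.

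Here is a counterexample with $\mathbb{X}=\mathbb{A}$.
\begin{itemize}
\item Take a closed immersion $g\colon V\hookrightarrow U$ of connected smooth $k$-schemes with no retraction $U\to V$. For instance, take a closed point $\mathrm{Spec}\,L\hookrightarrow\mathbb{A}^1_k$ with $L\neq k$ separable; this works because every $k$-algebra map $L\to k[u]$ lands in $k$.
\item Set $Y=\mathbb{A}^1\times U$ and $Z=\{0\}\times V\subset Y$, and put $F=L_{nis}(h_Y\sqcup_{h_Z}*)$.
\item For connected $W\in Sm_k$, one checks that $F(W)$ consists of the base point together with the maps $W\to Y$ that do not factor through $Z$.
\item Let $e\in F^{\mathbb{A}^1}(U)=F(Y)$ be the class of $\mathrm{id}_Y$. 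It restricts to the class $g^*e$ of $\mathrm{id}\times g$, which is a non-base-point element of $F^{-1}(V)$.
\item Suppose $g^*r=g^*e$ for some $r\in F^{-1}(U)$; necessarily $r\neq *$. Then $r$ is represented by some $\phi\colon Y\to Y$ with two properties. First, $\phi\circ 0_U$ factors through $Z\cong V$ via some $\psi\colon U\to V$. Second, $\phi\circ(\mathrm{id}\times g)=\mathrm{id}\times g$.
\item Restricting the second identity along $0_V$ gives $\psi\circ g=\mathrm{id}_V$, contradicting the choice of $g$.
\end{itemize}
Now suppose $\rho\colon F^{\mathbb{A}^1}\to F^{-1}$ were a retraction. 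Naturality would force $g^*e=\rho_V(g^*e)=g^*\rho_U(e)\in g^*F^{-1}(U)$, and we have just shown this is impossible.

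For pointed sets, what is true is only that $F^{-1}$ is the fibre of the retraction $x^*$ of $p_X^*$; this is probably what was intended. That weaker statement already gives the summand decomposition in the abelian case, which you proved correctly.
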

$\mathbb{A}^1$-locally we know that $\Omega_{\mathbb{G}}$ commutes with $\pi_i^{nis}$. Unfortunately, $\mathbb{P}^1$-locally without an appropriate purity theorem, at this point, we are able to deduce this only for Eilenberg-MacLane spaces:
\begin{prop}\label[prop]{x contraction of eilenberg}
Let $n$ be a natural number. Let $A$ be an $n$-strictly $\mathbb{P}^1$-invariant sheaf of $n$-groups. Then we have an equivalence $$\Omega_\mathbb{X}K^{nis}(A,n)\simeq K^{nis}(A_{-\mathbb{X}}, n).$$ 
\end{prop}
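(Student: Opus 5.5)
We compute the homotopy sheaves of $\Omega_\mathbb{X}K^{nis}(A,n)$ directly and show that they vanish except in degree $n$. Since both sides are $n$-truncated pointed Nisnevich sheaves with a canonical comparison, this identifies the space as an Eilenberg--MacLane space. Throughout we read $\Omega_\mathbb{X}$ as the pointed internal mapping space out of $\mathbb{X}=(X,x)$. It therefore sits in a fiber sequence
$$\Omega_\mathbb{X}\esc{Y}\to \esc{Y}^{X}\xrightarrow{x^*}\esc{Y},$$
where $\esc{Y}^X(U)=\esc{Y}(X\times U)$. Since $x^*$ has a section induced by $p_X$, this fiber sequence splits. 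Taking $\esc{Y}=K^{nis}(A,n)$, the homotopy presheaves are
$$\pi_i\,\esc{Y}(X\times U)=\mathrm{H}^{n-i}_{nis}(X\times U;A),$$
and those of $\Omega_\mathbb{X}\esc{Y}(U)$ are the kernels
$$\ker\bigl(\mathrm{H}^{n-i}_{nis}(X\times U;A)\xrightarrow{x^*}\mathrm{H}^{n-i}_{nis}(U;A)\bigr).$$
The splitting makes these kernels direct summands, which also gives surjectivity on $\pi_0$ and hence a clean long exact sequence.

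The key step is to show that, after Nisnevich sheafification in $U$, the kernels vanish for $n-i>0$. For $i=n$ the kernel is $A_{-\mathbb{X}}(U)$ by definition, which gives the claimed degree-$n$ homotopy sheaf. For $n-i=j\ge 1$, stalks are computed on henselian local essentially smooth $U$, so it suffices to show that the reduced group $\ker\bigl(\mathrm{H}^j_{nis}(X_U;A)\to \mathrm{H}^j_{nis}(U;A)\bigr)$ vanishes for such $U$. Here the hypothesis is used. By $n$-strict $\mathbb{P}^1$-invariance, $\mathrm{H}^j(\mathbb{P}^1_U;A)\cong \mathrm{H}^j(U;A)=0$ for $1\le j\le n$ when $U$ is henselian local. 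Write $X=\mathbb{P}^1\setminus F$ with $F$ a finite set of closed points, and apply Mayer--Vietoris for the Zariski cover of $\mathbb{P}^1_U$ by $X_U$ and a union of small neighbourhoods of $F_U$. The aim is then to show that $\mathrm{H}^j(X_U;A)$ injects into, or is controlled by, $\mathrm{H}^{j+1}$ with supports in $F_U$, and to compare this with $\mathrm{H}^{j+1}(\mathbb{P}^1_U;A)$.

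This control of $\mathrm{H}^j(X_U;A)$ is where I expect the main obstacle: without a purity theorem, cohomology with supports in $F_U$ is not obviously controlled. I would first try to reduce to the two extreme cases $X=\mathbb{P}^1$ and $X=\mathbb{A}^1$. For $X=\mathbb{P}^1$ the vanishing is exactly the strict invariance just used. For $X=\mathbb{A}^1$ I would use the Zariski cd square
$$\mathbb{G}_m\subset\mathbb{A}^1,\quad \mathbb{A}^1\subset\mathbb{P}^1$$
as in \Cref{a1 inv nis is p1 inv}, together with the sheafification of the cohomology presheaves in $U$. If this direct approach fails for general $X$, I would restrict to the cases where $X_U$ is affine over a henselian local base and argue via the Leray spectral sequence for $X_U\to U$. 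The fibers are curves, so the relative cohomological dimension is at most $1$, and only a single boundary term has to be killed using the hypothesis. Once the higher homotopy sheaves vanish, the comparison map
$$\Omega_\mathbb{X}K^{nis}(A,n)\to K^{nis}(A_{-\mathbb{X}},n),$$
obtained by $n$-truncation and identification of $\pi_n$, is an equivalence by Whitehead's theorem in the hypercomplete Nisnevich topos, which applies because everything is truncated.
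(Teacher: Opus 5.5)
\textbf{There is a genuine gap: the key vanishing is never proved.} Your reduction is sound as far as it goes. The homotopy presheaves of $\Omega_\mathbb{X}K^{nis}(A,n)$ are the split kernels $\ker\bigl(\mathrm{H}^{n-i}_{nis}(X\times U;A)\to\mathrm{H}^{n-i}_{nis}(U;A)\bigr)$. So the proposition is equivalent to $\mathrm{H}^j_{nis}(X_U;A)=0$ for every henselian local essentially smooth $U$ and $1\le j\le n$. But that vanishing is the whole content of the statement. You flag it as the main obstacle and list fallbacks, none of which works as stated:
\begin{enumerate}
\item Over a henselian local $U$ the Leray spectral sequence for $X_U\to U$ is tautological. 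Since $U$ is a point of the Nisnevich site, $E_2^{p,q}=0$ for $p>0$ and $E_2^{0,q}=\mathrm{H}^q_{nis}(X_U;A)$.
\item There is no Nisnevich ``relative cohomological dimension $\le 1$'' for a family of curves; $X_U$ has Nisnevich cohomological dimension $\dim U+1$.
\item The route through cohomology with supports in $F_U$ needs exactly the purity input you say is unavailable.
\item The $\mathbb{G}_m$-square for $X=\mathbb{A}^1$ only trades the unknown $\mathrm{H}^j(\mathbb{A}^1_U;A)$ for the equally unknown $\mathrm{H}^j(\mathbb{G}_{m,U};A)$. It also says nothing about a general $X=\mathbb{P}^1\setminus F$.
\end{enumerate}

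\textbf{The missing idea is a descent from henselian local bases of arbitrary dimension to fields.} This is how the paper sidesteps purity.
\begin{enumerate}
\item For $1\le j\le n$, $K^{nis}(A,j)$ is $\mathbb{P}^1$-local by $j$-strict invariance. Hence $\Omega_\mathbb{X}K^{nis}(A,j)$ is a projective motivic space.
\item By \Cref{P1 connected on F_k}, which rests on the Gabber-presentation argument of \Cref{zar stalk in generic pi0}, its $\pi_0^{nis}$ vanishes once it vanishes on all $K\in\mathcal{F}_k$.
\item Since $\Omega^{n-j}\Omega_\mathbb{X}K^{nis}(A,n)\simeq\Omega_\mathbb{X}K^{nis}(A,j)$, this kills $\pi_{n-j}$.
\item Over a field, $X_K$ is a curve, so only $\mathrm{H}^1_{nis}(X_K;A)$ can be nonzero. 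It agrees with $\mathrm{H}^1_{zar}(X_K;A)$ by the lemma following the proposition, which again comes from \Cref{zar stalk in generic pi0}.
\item $\mathrm{H}^1_{zar}(X_K;A)$ is a quotient of $\mathrm{H}^1_{zar}(\mathbb{P}^1_K;A)\cong\mathrm{H}^1(K;A)=0$. Surjectivity holds because degree-$2$ Zariski local cohomology at closed points of a curve vanishes; the paper phrases this as extending Zariski covers of $X_K$ to $\mathbb{P}^1_K$.
\end{enumerate}
The removed locus becomes harmless only once the base is zero-dimensional, and your direct stalkwise computation has no mechanism to get there. Insert this step, applied to the projective motivic space $\Omega_\mathbb{X}K^{nis}(A,j)$ rather than to stalks of $U\mapsto\mathrm{H}^j(X_U;A)$ head-on. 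The rest of your argument then goes through: $n$-truncation, identifying $\pi_n$ with $A_{-\mathbb{X}}$, and the Whitehead step.
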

\begin{proof}
    For $n=0$, this is just the definition. 
    First of all, being a right adjoint, $\Omega_\mathbb{X}$ preserves $n$-truncated objects [\cite{lurie2009higher}, Proposition 5.5.6.16]. It follows that $\Omega_\mathbb{X}K^{nis}(A,n)$ is $n$-truncated. Clearly, $$\Omega^n\Omega
    _\mathbb{X}\mathrm{K}^{nis}(A,n)\simeq \Omega_\mathbb{X}\Omega^n\mathrm{K}^{nis}(A,n)\simeq \Omega_\mathbb{X}A=A_{-\mathbb{X}}.$$ Since for $0< i\leq n$ $$\Omega^{n-i}\Omega_\mathbb{X}\mathrm{K}^{nis}(A,n) \simeq \Omega_\mathbb{X}\mathrm{K}^{nis}(A,i),$$ it suffices to show that $\Omega_\mathbb{X}\mathrm{K}^{nis}(A,i)$ is connected. Observe that since $i\leq  n$ and $A$ is $n$-strictly $\mathbb{P}^1$-invariant, it is $i$-strictly $\mathbb{P}^1$-invariant as well. That is $\Omega_\mathbb{X}\mathrm{K}^{nis}(A,i)$ is $\mathbb{P}^1$-local. So by \Cref{P1 connected on F_k} it reduces to show that for any $K\in \mathcal{F}_k$ $$\pi_0{(\Omega_\mathbb{X}\mathrm{K}^{nis}(A,i))}(K)=[\mathbb{X}_K, \mathrm{K}^{nis}(A,i)]_\bullet=0.$$ But this is a retract of $$[\mathbb{X}_K, \mathrm{K}^{nis}(A,i)]=H^i_{nis}(X_K, A)$$ by  the projection map $X\to Speck$. Since the Nisnevich cohomological dimension of $X_K$ is $1$, we only need to consider $i=1$. By the strong $\mathbb{P}^1$-invariance of $A$ and the fact that $X$ has Krull dimension $1$, we know that $$H^1_{nis}(X_K, A)\cong H^1_{zar}(X_K,A)$$ by the lemma below. Now, a Zariski cover of $X_K$ can easily be extended to $\mathbb{P}^1$ by adding the leftover point to one of the open sets. By renaming the cocycles, one thus finds a surjective map $$H^1_{zar}(\mathbb{P}^1_K,A)\to H^1_{zar}(X_K,A).$$ But due to strong $\mathbb{P}^1$-invariance of $A$ and the $0$ cohomological dimensionality of fields, one obtains $$H^1_{zar}(\mathbb{P}^1_K,A)\cong H^1_{zar}(K,A)=*,$$ rendering $H^1_{zar}(X_K,A)=*$. 
\end{proof}
\begin{lem}[2.11]
    Let $A$ be an $n$-strictly $\mathbb{P}^1$-invariant and $X \in \mathrm{Sm}_k$. Then $H^i_{\mathrm{Zar}}(X, A) = H^i_{\mathrm{Nis}}(X, A)$ for $i \leq m$.
\end{lem}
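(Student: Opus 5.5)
The plan is to compare the two cohomologies through the Leray spectral sequence for the change-of-topology morphism $\epsilon\colon X_{nis}\to X_{zar}$, namely
$$E_2^{p,q}=H^p_{zar}(X,R^q\epsilon_*A)\Rightarrow H^{p+q}_{nis}(X,A).$$
(We read the bound $m$ in the statement as the given $n$.) For $i\leq 1$ with $A$ non-abelian, we replace the spectral sequence by the usual comparison of torsors, $H^1_{zar}\hookrightarrow H^1_{nis}$, and only surjectivity is at stake. It therefore suffices to show that the Zariski sheaves $R^q\epsilon_*A$, i.e.\ the Zariski sheafifications of $U\mapsto H^q_{nis}(U,A)$, vanish for $1\leq q\leq n$. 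Their stalks at a point $x\in X$ are $H^q_{nis}(X_x,A)$, where $X_x=\mathrm{Spec}\,\mathcal{O}_{X,x}$ is the local scheme. So the whole statement reduces to showing
$$H^q_{nis}(X_x,A)=0\quad\text{for } 1\leq q\leq n .$$

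To obtain this local vanishing, we use that $A$ being $n$-strictly $\mathbb{P}^1$-invariant means $\mathrm{B}^q_{nis}A=\mathrm{K}^{nis}(A,q)$ is a projective motivic space for all $q\leq n$. Consider the pointed projective motivic space $\esc{Y}=\mathrm{K}^{nis}(A,q)$ with $q\geq 1$. Applying \Cref{zar stalk in generic pi0} to $\esc{Y}$ with its canonical base point, the map
$$\pi_0\esc{Y}(X_x)=H^q_{nis}(X_x,A)\to \pi_0\esc{Y}(\eta)=H^q_{nis}(\eta,A)$$
has trivial fiber. The target vanishes because $\eta$ is the spectrum of a field and so has Nisnevich cohomological dimension $0$ (this needs $q\geq 1$). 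Hence the fiber over the base point is all of $H^q_{nis}(X_x,A)$, and triviality of that fiber forces $H^q_{nis}(X_x,A)=*$.

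This is exactly what is needed in degrees $1\leq q\leq n$. In the non-abelian case $q=1$ we use the pointed-set version, which is what \Cref{zar stalk in generic pi0} provides anyway. Then the spectral sequence collapses in the range $p+q\leq n$: every term with $1\leq q\leq n$ is zero, so the edge map $H^i_{zar}(X,\epsilon_*A)=H^i_{zar}(X,A)\to H^i_{nis}(X,A)$ is an isomorphism for $i\leq n$. For $i\le 1$ in the non-abelian setting, the same local triviality says every Nisnevich $A$-torsor is Zariski-locally trivial, which gives the surjectivity.

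The main obstacle is checking that \Cref{zar stalk in generic pi0} really applies to $X_x$. It is stated for connected $X\in Sm_k$ with $x\in X$, and its proof computes the colimit over neighborhoods, which is the same as working with the local scheme. The other point to handle carefully is the identification $\pi_0\mathrm{K}^{nis}(A,q)(U)=H^q_{nis}(U,A)$ for essentially smooth $U$, which follows by continuity from the smooth case since Nisnevich cohomology commutes with the cofiltered limit of affine neighborhoods.
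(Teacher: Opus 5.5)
Your proposal is correct and follows the paper's own argument: apply \Cref{zar stalk in generic pi0} to the projective motivic spaces $\mathrm{K}^{nis}(A,q)$ to get a trivial fiber for $H^q_{nis}(X_x,A)\to H^q_{nis}(\eta,A)$, then use that the target vanishes over a field to conclude that the Zariski stalks of Nisnevich cohomology vanish. The differences are only ones of completeness: you make explicit the Leray spectral sequence for $\epsilon\colon X_{nis}\to X_{zar}$ and the torsor comparison in the non-abelian case, which the paper leaves implicit, and you correctly restrict the local vanishing to $1\le q\le n$, the range where $\mathrm{K}^{nis}(A,q)$ is known to be $\mathbb{P}^1$-local, whereas the paper writes ``$i\neq 0$''.
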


\begin{proof}
   We shall show that $H^i_{\mathrm{Nis}}(X_x, F) = 0$ for all $x \in X$ and $i \neq 0$. Now, by applying \Cref{zar stalk in generic pi0} to the projective motivic space $\mathrm{K}(A, i)$, we know that the canonical map $$H^i(X_x, A) \to H^i(\eta, A)$$ has trivial fiber, where $\eta$ is the generic point of the component of $x$. But since fields have Nisnevich cohomological dimension $0$, $H^i(\eta, A)=*$, and we are done.
\end{proof}
\begin{cor}
Let $n$ be a natural number. Let $A$ be a $n$-strictly $\mathbb{P}^1$-invariant sheaf of $n$-groups. Then so is $A_{-\mathbb{X}}$.
\end{cor}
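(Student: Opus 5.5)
The plan is to reduce the statement to \Cref{x contraction of eilenberg}: we must show that $\mathrm{B}^j_{nis}(A_{-\mathbb{X}})=\mathrm{K}^{nis}(A_{-\mathbb{X}},j)$ is $\mathbb{P}^1$-local for every $j\leq n$. This is the reformulation of $n$-strict $\mathbb{P}^1$-invariance recorded in the remark after the definition of $i$-strict invariance.

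\textbf{Group structure.} First note that $A_{-\mathbb{X}}$ is again a Nisnevich sheaf of groups, abelian when $A$ is and $n\geq 2$. This follows from part (1) of the observation preceding \Cref{x contraction of eilenberg}, since $A_{-\mathbb{X}}$ is the kernel of a homomorphism $A(\mathbb{X}_U)\to A(U)$.

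\textbf{Locality for each $j$.} Fix $0\leq j\leq n$. Since $A$ is $n$-strictly $\mathbb{P}^1$-invariant, it is also $j$-strictly $\mathbb{P}^1$-invariant. Applying \Cref{x contraction of eilenberg} with $n$ replaced by $j$ gives
$$\Omega_\mathbb{X}\mathrm{K}^{nis}(A,j)\simeq \mathrm{K}^{nis}(A_{-\mathbb{X}},j).$$
By hypothesis and the remark, $\mathrm{K}^{nis}(A,j)$ is Nisnevich local and $\mathbb{P}^1$-local, i.e.\ a projective motivic space. By part (2) of the same observation, the $\mathbb{X}$-contraction of a projective motivic space is again projective motivic. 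Concretely, $\Omega_\mathbb{X}\esc{Y}(U)$ is the fibre of $\esc{Y}(X\times U)\to\esc{Y}(U)$, so $\mathbb{P}^1$-locality of $\esc{Y}$, applied at both $X\times U$ and $U$, passes to the fibre. Hence $\mathrm{K}^{nis}(A_{-\mathbb{X}},j)$ is $\mathbb{P}^1$-local for all $j\leq n$, which is the claim.

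\textbf{Main obstacle.} The only real content is the identification in \Cref{x contraction of eilenberg}, namely that $\Omega_\mathbb{X}\mathrm{K}^{nis}(A,j)$ is connected in positive degrees, and this has already been established. What remains to check is that its proof used only $j$-strict invariance. It did: it needed strong invariance for the $H^1$ vanishing and $i$-strict invariance for the locality of $\mathrm{K}(A,i)$. This uniform dependence on the index is what permits the induction on $j\leq n$ above.
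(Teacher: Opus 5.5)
Your proposal is correct and follows the paper's own argument: combine \Cref{x contraction of eilenberg} with the characterization of $n$-strict $\mathbb{P}^1$-invariance by $\mathbb{P}^1$-locality of Eilenberg--MacLane spaces, and with the fact that $\Omega_\mathbb{X}$ preserves $\mathbb{P}^1$-local (and Nisnevich-local) spaces. The paper checks only $\mathrm{K}^{nis}(-,n)$ while you check every $j\leq n$, which is equivalent because $\mathrm{K}^{nis}(A,j)\simeq\Omega^{n-j}\mathrm{K}^{nis}(A,n)$; your closing paragraph is unnecessary, since there is no induction and the proposition, stated for arbitrary $n$, already applies at each $j$ without revisiting its proof.
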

\begin{proof}
    This follows at once from \Cref{x contraction of eilenberg} and that $n$-strict $\mathbb{P}^1$-invariance is equivalent to the $\mathbb{P}^1$-locality of $\mathrm{K}^{nis}(-, n)$.
\end{proof}
\begin{rem}
    Given a 1-dimensional rational subscheme $X$ of $P$ pointed by a rational point $x\in X$ to machieve an equivalence $$(\pi_i^{P})_{-\mathbb{X}}=\Omega_{\mathbb{X}}\pi_i^{P}\simeq \pi_i^P\Omega_{\mathbb{X}}$$ one requires a version of $P$-local purity. In the $P=\mathbb{A}^1$ case this is true for $\mathbb{X}=\mathbb{G}_m$ [\cite{morel2012a1}, Theorem 6.13]. In the $\mathbb{P}^1$ local case, we expect this for $\mathbb{X}=\mathbb{A}^1$, but to the best of my knowledge, this would require the `strong'ness of the higher homotopy group sheaves, and thus purity in disguise. 
\end{rem}

\section{Stable \texorpdfstring{ $\mathbb{P}^1$}{{P}1}-homotopy} 

\subsection{Model for stable $\mathbb{P}^1$-spectras}\mbox{}

Let $\mathcal{SH}^{\mathbb{P}^1}_{S^1}(S)$ be the stabilization of $\mathcal{H}^{\mathbb{P}^1}(S)$. By formal arguments, this is the localization of  $\mathcal{SH}^{nis}_{S^1}(S)$ at the set $$\{\Sigma^\infty \mathbb{P}^1_{X+}\to \Sigma^\infty X_+\}_{X\in Sm_S}.$$

In this section, we want to establish a connectivity theorem in the line of [\cite{morel2005stable}]. The idea is just a simple translation of Morel's methods as adapted by [\cite{ayoub2020P1}] for the abelian derived case. We claim no originality for these results. The main result is the following:
\begin{thm}
    Let $k$ be a field. The stable $\mathbb{P}^1$-localization functor $$L_{pmot}^{sp}: \mathcal{SH}^{nis}_{S^1}(k)\to \mathcal{SH}^{nis}_{S^1}(k)$$ preserves connectivity. 
\end{thm}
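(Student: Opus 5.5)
We follow Morel's stable connectivity argument as adapted by Ayoub to $\mathbb{P}^1$. By shifting, it suffices to show: if $\mal{A}\in\mathcal{SH}^{nis}_{S^1}(k)$ is Nisnevich $0$-connective, then $L^{sp}_{pmot}\mal{A}$ is $0$-connective. Equivalently, we show that $\mal{B}:=L^{sp}_{pmot}\mal{A}$ has $\pi_n^{sp,nis}\mal{B}=0$ for $n<0$. Since $\mathcal{SH}^{\mathbb{P}^1}_{S^1}$ is generated under colimits by $\Sigma^\infty_{S^1}$ of $\mathbb{P}^1$-local Nisnevich localizations of smooth schemes, and $L^{sp}_{pmot}$ commutes with colimits and $\Sigma^\infty_{S^1}$, one could also reduce to suspension spectra. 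The cleaner route, however, is the $t$-structure style argument.

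\textbf{Step 1: a right-orthogonality criterion.} Let $\mal{E}$ be $\mathbb{P}^1$-local. It suffices to show that the truncation $\tau_{\leq -1}\mal{E}$ (Nisnevich $t$-structure) is again $\mathbb{P}^1$-local. Indeed, if so, then
\[
\mathrm{Map}(L^{sp}_{pmot}\mal{A},\tau_{\leq -1}\mal{B})\simeq \mathrm{Map}(\mal{A},\tau_{\leq -1}\mal{B})\simeq *,
\]
because $\mal{A}$ is connective. Taking $\mal{E}=\mal{B}$ then forces $\tau_{\leq -1}\mal{B}=0$.

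\textbf{Step 2: locality of truncations via homotopy sheaves.} To prove that $\tau_{\leq -1}\mal{E}$ is $\mathbb{P}^1$-local, I would take a \emph{minimal counterexample} approach, as in Morel's proof. Let $n$ be the smallest degree with $\pi_n^{sp,nis}\mal{E}\neq 0$, if there is a negative one. Consider the cofiber sequence $\tau_{\geq n+1}\mal{E}\to\mal{E}\to \Sigma^n H\pi_n$, and prove that the bottom homotopy sheaf $\pi_n^{sp,nis}\mal{E}$ is strictly $\mathbb{P}^1$-invariant; then inductively every layer is $\mathbb{P}^1$-local. The strict invariance is the stable analog of \S4.2 and proceeds in three substeps.
\begin{enumerate}
\item \emph{Weak unramifiedness of $\pi_n$.} This mimics \Cref{zar stalk in generic pi0}: Gabber's lemma together with the Nisnevich square comparing $X_3\setminus Z\subset X_3$ with $\mathbb{P}^1_S\setminus Z\subset\mathbb{P}^1_S$, and the section of $\mathbb{P}^1_S\setminus Z\to S$, show that $\pi_n\mal{E}(X_x)\to\pi_n\mal{E}(\eta)$ is injective. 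Here the spectral setting makes all fibers groups, so injectivity is automatic from trivial fibers. The finite field case is handled by the same dimension induction.
\item \emph{Weak $\mathbb{P}^1$-invariance on $\mathcal{F}_k$.} Since $\mathbb{P}^1_K$ has Nisnevich cohomological dimension $1$, the descent spectral sequence for $\mal{E}$ at $\mathbb{P}^1_K$ and at $K$, combined with $\mal{E}(K)\simeq\mal{E}(\mathbb{P}^1_K)$ and the vanishing of $\pi_m$ for $m<n$, identifies $\pi_n\mal{E}(\mathbb{P}^1_K)$ with $\pi_n\mal{E}(K)$.
\item \emph{Conclusion.} \Cref{weakly p1 + weakly unramified => p1 invariant} then gives $\mathbb{P}^1$-invariance of $\pi_n$.
\end{enumerate}

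\textbf{Step 3: from $\mathbb{P}^1$-invariance to strict $\mathbb{P}^1$-invariance.} This is the main obstacle. The unstable section notes that purity is missing, and indeed $\mathbb{G}_m$ shows that invariance does not imply strictness in general. Stably, however, $H^i_{nis}(\mathbb{P}^1_X,\pi_n)$ can be read off from $\mal{E}$ itself. The sheaf $\pi_n$ is the lowest homotopy sheaf of a $\mathbb{P}^1$-local spectrum, so $\Sigma^nH\pi_n$ is the bottom of the Postnikov tower. Using a Gersten/Bloch--Ogus type resolution obtained from the weak unramifiedness in Step 2, one shows that $\pi_n$ has a flasque resolution by its generic stalks, and the resulting Cousin complex is $\mathbb{P}^1$-invariant. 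This is exactly Ayoub's argument in the derived setting and is imported from [\cite{nStbat}]. If the Cousin complex approach proves too delicate, the fallback is to use the triviality criterion on $\mathcal{F}_k$, the stable analog of \Cref{P1 connected on F_k}, to check vanishing of $\pi_m^{sp,nis}L^{sp}_{pmot}\mal{A}$ for $m<0$ on fields. There one would compute $L^{sp}_{pmot}$ via the colimit formula of \Cref{model for pmot localization} stabilized. The key input is that the terms $\mal{A}(\delta^n_{\mathbb{P}}\times K)$ are $(-1)$-connective, because $(\mathbb{P}^1_K)^n$ has Nisnevich cohomological dimension $n$ but its cohomology in degrees $>0$ with coefficients in $\mathbb{P}^1$-invariant sheaves vanishes on fields. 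The remaining terms are then controlled inductively.
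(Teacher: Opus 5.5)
Your proposal has a genuine gap: the step you yourself flag as the main obstacle (Step 3) is never proved, and the fallback rests on a false connectivity claim.

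\textbf{The main route.} Your Step 1 is formally correct. But ``truncations of $\mathbb{P}^1$-local spectra are $\mathbb{P}^1$-local'' is \emph{equivalent} to the theorem. In the paper it is derived afterwards \emph{from} the theorem (\Cref{t structure for P1 motivic spectras}, \Cref{LAYERS OF P1 SPECTRA}). All the content is therefore pushed into Step 3, which is only asserted.

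Steps 2(1)--2(3) give at most that $\pi_n$ is $\mathbb{P}^1$-invariant and weakly unramified. Those two properties say nothing about $H^i_{nis}(\mathbb{P}^1_X,\pi_n)$. Indeed $\mathbb{G}_m$ has both, yet $H^1_{nis}(\mathbb{P}^1_K,\mathbb{G}_m)=\mathbb{Z}\neq 0=H^1_{nis}(K,\mathbb{G}_m)$ (\Cref{Gm is not strong p1 local}). The extra stable input you invoke is a Gersten/Cousin flasque resolution of $\pi_n$ whose complex is $\mathbb{P}^1$-invariant. It would need effaceability in every codimension and a proof that the Gersten complexes of $\mathbb{P}^1_X$ and $X$ are quasi-isomorphic, and neither is supplied. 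The paper itself only reaches strict invariance after, and by means of, the connectivity theorem. Separately, your minimal-degree induction assumes $L^{sp}_{pmot}\mal{A}$ is bounded below, which is not known in advance.

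\textbf{The fallback.} It points in the right direction, but its key claim is false. For connective $\mal{A}$, the values $\mal{A}((\mathbb{P}^1_K)^n)$ need not be connective. For $\mal{A}=H\mathbb{G}_m$ one has $\pi_{-1}\mal{A}(\mathbb{P}^1_K)=\mathrm{Pic}(\mathbb{P}^1_K)=\mathbb{Z}$, even though $\mathbb{G}_m$ is $\mathbb{P}^1$-invariant. Moreover, the homotopy sheaves of an arbitrary connective $\mal{A}$ carry no invariance at all, so your justification presupposes what is being proved.

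\textbf{The missing idea.} What you need is Morel's preconnectivity bookkeeping:
\begin{enumerate}
\item A Nisnevich-connective $\mal{A}$ has $\mal{A}(\mathcal{O})$ $(-\dim\mathcal{O})$-connective for every essentially smooth $\mathcal{O}$.
\item This property is preserved by $L^{sp}_{nis}$ and by $\Phi_{\mathbb{P}}$, the latter because $\mathbb{P}^1$ is one-dimensional with a rational point ([\cite{nStbat}, Lemma 2.3.3]).
\item Hence, via the model of \Cref{formua for p1 localization spectra}, $L^{sp}_{pmot}\mal{A}(K)$ is connective for all $K\in\mathcal{F}_k$ (\Cref{Lpmot takes stable connected spectra to weakly connected spectra}).
\end{enumerate}
Combine this with your own Step 2(1): the Gabber-lemma injectivity $\pi_m\mal{E}(X_x)\hookrightarrow\pi_m\mal{E}(\eta)$ for $\mathbb{P}^1$-local $\mal{E}$, which is exactly the argument behind \Cref{weakly connected to connected}. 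Together they kill $\pi_m^{sp,nis}L^{sp}_{pmot}\mal{A}$ for $m<0$ directly. That two-step argument is the paper's proof. Everything in your proposal other than Step 2(1) is unnecessary, since strict $\mathbb{P}^1$-invariance of the homotopy sheaves comes out as a corollary rather than an input.
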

We will prove the above theorem in two steps. But first, let us restate how to construct a model for $\mathbb{P}^1$-localization for spectra. For the rest of the subsection, we shall suppress the base by assuming that we are working with a fixed qcqs base scheme $S$.
\begin{con}
Let $\mathbb{P}$ be the infinite suspension of the pointed projective line $(\mathbb{P}^1,0)$. Define $\Phi_\mathbb{P}$ as the cofiber of the evaluation $$\mathbb{P}\otimes (-)^{\mathbb{P}}\to 1_{\mathcal{SH}(S)}\to \Phi_\mathbb{P}.$$ By construction, it thus comes with a canonical map $1_{\mathcal{SH}(S)}\to \Phi_\mathbb{P}$. And let $$\Phi^\infty_\mathbb{P}:=\dcolim_{n}\Phi^n_\mathbb{P}$$ be the limit of the tower $$1_{\mathcal{SH}(S)}\to \Phi_\mathbb{P}\to \Phi^2_\mathbb{P}\to \cdots.$$
\end{con}
\begin{prop}\label[prop]{model for stable p1 bousfield localization}
    $\Phi^\infty_\mathbb{P}$ is a model for the monoidal localization of $\mathcal{SH}_{S^1}$ at the class $$\{\Sigma^\infty \mathbb{P}^1_{X+}\to \Sigma^\infty X_+\}_{X\in Sm_S}.$$
\end{prop}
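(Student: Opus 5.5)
We want to show that $\Phi^\infty_\mathbb{P}\mathcal{A}$ is $\mathbb{P}^1$-local for every $\mathcal{A}\in\mathcal{SH}_{S^1}$, and that the unit $\mathcal{A}\to\Phi^\infty_\mathbb{P}\mathcal{A}$ is a $\mathbb{P}^1$-equivalence. Here $\mathbb{P}^1$-local means local with respect to $\{\Sigma^\infty\mathbb{P}^1_{X+}\to\Sigma^\infty X_+\}$. Since everything is stable and the section $0$ splits $\Sigma^\infty\mathbb{P}^1_{X+}\simeq \Sigma^\infty X_+\oplus \mathbb{P}\otimes\Sigma^\infty X_+$, a spectrum $\mathcal{B}$ is $\mathbb{P}^1$-local iff $\mathrm{map}(\mathbb{P}\otimes\Sigma^\infty X_+,\mathcal{B})\simeq 0$ for all $X\in Sm_S$. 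Equivalently, the internal mapping spectrum $\mathcal{B}^{\mathbb{P}}$ vanishes. So the class being inverted is generated, as a localizing tensor ideal, by $\mathbb{P}$. This already explains the monoidality: a monoidal Bousfield localization at a tensor ideal $\langle\mathbb{P}\rangle$ is the localization whose acyclics are the localizing ideal generated by $\mathbb{P}$.

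\emph{Step 1 (unit is an equivalence).} By construction, the cofiber of $\mathcal{A}\to\Phi_\mathbb{P}\mathcal{A}$ is $\Sigma\,\mathbb{P}\otimes\mathcal{A}^{\mathbb{P}}$. This lies in the localizing ideal generated by $\mathbb{P}$, hence is $\mathbb{P}^1$-acyclic. So each transition map $\Phi^n_\mathbb{P}\mathcal{A}\to\Phi^{n+1}_\mathbb{P}\mathcal{A}$ is a $\mathbb{P}^1$-equivalence, and since equivalences are closed under filtered colimits so is $\mathcal{A}\to\Phi^\infty_\mathbb{P}\mathcal{A}$.

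\emph{Step 2 (locality).} We must show $(\Phi^\infty_\mathbb{P}\mathcal{A})^{\mathbb{P}}\simeq 0$. The first point is that $\mathbb{P}$ is compact: it is a cofiber of suspension spectra of smooth schemes, which are compact in $\mathcal{SH}^{nis}_{S^1}$ because $S$ is qcqs and Nisnevich sheafification is finitary. Hence $(-)^{\mathbb{P}}$ commutes with filtered colimits, and it suffices to show that each map
\[(\Phi^n_\mathbb{P}\mathcal{A})^{\mathbb{P}}\to(\Phi^{n+1}_\mathbb{P}\mathcal{A})^{\mathbb{P}}\]
is nullhomotopic. Apply $(-)^{\mathbb{P}}$ to the cofiber sequence $\mathbb{P}\otimes\mathcal{B}^{\mathbb{P}}\xrightarrow{ev}\mathcal{B}\to\Phi_\mathbb{P}\mathcal{B}$. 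The map $\mathcal{B}^{\mathbb{P}}\to(\Phi_\mathbb{P}\mathcal{B})^{\mathbb{P}}$ is null provided $ev^{\mathbb{P}}:(\mathbb{P}\otimes\mathcal{B}^{\mathbb{P}})^{\mathbb{P}}\to\mathcal{B}^{\mathbb{P}}$ admits a section. Such a section is supplied by the coevaluation $\mathcal{B}^{\mathbb{P}}\to(\mathbb{P}\otimes\mathcal{B}^{\mathbb{P}})^{\mathbb{P}}$, and the triangle identity shows that $ev^{\mathbb{P}}\circ coev\simeq\mathrm{id}$. Therefore the composite in the colimit is zero, and $(\Phi^\infty_\mathbb{P}\mathcal{A})^{\mathbb{P}}\simeq\colim_n(\Phi^n_\mathbb{P}\mathcal{A})^{\mathbb{P}}\simeq0$.

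\emph{Main obstacle.} The delicate point is Step 2: checking that the triangle identity really yields a section compatible with the cofiber sequence. Here one uses the adjunction $\mathbb{P}\otimes-\dashv(-)^{\mathbb{P}}$ together with naturality of $ev$. One must also confirm the compactness of $\mathbb{P}$, without which the colimit interchange fails. If the section argument proves awkward, the fallback is to follow Ayoub's treatment of the analogous construction in the derived setting, adapted verbatim to spectra.
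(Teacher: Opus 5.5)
Your proof is correct. The paper gives no argument of its own here; it only invokes [\cite{nStbat}, Proposition 2.2.2]. What you have written is therefore a self-contained proof, in this special case, of the general statement being cited.

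The ingredients you isolate are exactly what that statement needs:
\begin{itemize}
\item the splitting of $\Sigma^\infty\mathbb{P}^1_{X+}$, which identifies the local objects with those $\mathcal{B}$ having $\mathcal{B}^{\mathbb{P}}\simeq 0$, and makes the acyclics the localizing $\otimes$-ideal generated by $\mathbb{P}$ (this is where monoidality comes from);
\item the triangle identity $(ev_{\mathcal{B}})^{\mathbb{P}}\circ\eta_{\mathcal{B}^{\mathbb{P}}}\simeq\mathrm{id}$, which makes $\mathcal{B}^{\mathbb{P}}\to(\Phi_{\mathbb{P}}\mathcal{B})^{\mathbb{P}}$ null;
\item compactness, which lets $(-)^{\mathbb{P}}$ pass through the sequential colimit.
\end{itemize}
The citation buys generality. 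Your route makes the hypotheses visible, notably that over a qcqs base the relevant objects stay compact after Nisnevich localization.

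Two precisions:
\begin{itemize}
\item Since $\mathcal{B}^{\mathbb{P}}(X)\simeq\mathrm{map}(\mathbb{P}\otimes\Sigma^\infty X_+,\mathcal{B})$, the colimit interchange needs compactness of $\mathbb{P}\otimes\Sigma^\infty X_+$ for every $X\in Sm_S$, not only of $\mathbb{P}$. This does hold, because $\mathbb{P}\otimes\Sigma^\infty X_+$ is a summand of $\Sigma^\infty\mathbb{P}^1_{X+}$.
\item Step 2 uses that the transition map $\Phi^n_{\mathbb{P}}\mathcal{A}\to\Phi^{n+1}_{\mathbb{P}}\mathcal{A}$ is the canonical map $\mathcal{B}\to\Phi_{\mathbb{P}}\mathcal{B}$ at $\mathcal{B}=\Phi^n_{\mathbb{P}}\mathcal{A}$. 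It is not $\Phi^n_{\mathbb{P}}$ applied to $\mathcal{A}\to\Phi_{\mathbb{P}}\mathcal{A}$. That is the natural reading of the tower, but you should state it.
\end{itemize}

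Your ``main obstacle'' is not actually an obstacle. If $f$ in a cofiber sequence $A\xrightarrow{f}B\xrightarrow{g}C$ has a section $s$, then $g\simeq g\circ f\circ s\simeq 0$. Nothing further needs checking, so the fallback to Ayoub is unnecessary.
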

\begin{proof}
    Apply [\cite{nStbat}, Proposition 2.2.2].
\end{proof}
\begin{thm}\label{formua for p1 localization spectra}
    Over an arbitrary base, there is an equivalence of localization functors $$L_{pmot}^{sp}\simeq L^{sp}_{\mathbb{P}^1,nis}\simeq \dcolim_{n}\big(\Phi_{\mathbb{P}^1} L_{nis}^{sp}\big )^n.$$ 
\end{thm}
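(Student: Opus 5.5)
The plan is to copy the argument of \Cref{model for pmot localization}, with the unstable $L_{\mathbb{P}^1}$ replaced by the spectral model $\Phi^\infty_{\mathbb{P}}$ of \Cref{model for stable p1 bousfield localization}. There are two equivalences to prove. The first, $L_{pmot}^{sp}\simeq L^{sp}_{\mathbb{P}^1,nis}$, is formal. The stabilization of $\mathcal{H}^{\mathbb{P}^1}(S)$ is the localization of $\mathcal{SH}^{nis}_{S^1}(S)$ at the maps $\Sigma^\infty\mathbb{P}^1_{X+}\to\Sigma^\infty X_+$. It is therefore also the localization of $\mathcal{SH}_{S^1}(S)=\mathrm{Sp}(\mathcal{P}(S))$ at the union of two sets: the stabilized Nisnevich generating equivalences and these $\mathbb{P}^1$-projections. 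Local objects are exactly the spectra whose spaces are Nisnevich local and $\mathbb{P}^1$-local, and both localizations have the same unit maps, so the two functors coincide.

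For the colimit formula, write $\mathcal{X}\to\Phi_{\mathbb{P}^1}L^{sp}_{nis}\mathcal{X}\to L^{sp}_{nis}\Phi_{\mathbb{P}^1}L^{sp}_{nis}\mathcal{X}\to\cdots$ for the interleaved diagram. Here I read $\Phi_{\mathbb{P}^1}$ as the full model $\Phi^\infty_{\mathbb{P}}$, which is the $\mathbb{P}^1$-localization of spectral presheaves by \Cref{model for stable p1 bousfield localization}. If one insists on reading it as the single-step functor $\Phi_\mathbb{P}$, the interleaved diagram still has a cofinal subsequence computing $\Phi^\infty$ between consecutive Nisnevich steps, so the argument below is unaffected. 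I will check three things.

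\begin{enumerate}
\item Every map in the diagram is a projective motivic equivalence. For Nisnevich units this is clear. For $\mathcal{Y}\to\Phi_{\mathbb{P}}\mathcal{Y}$, its cofiber is $\mathbb{P}\otimes\mathcal{Y}^{\mathbb{P}}$, and $\mathbb{P}=\Sigma^\infty(\mathbb{P}^1,0)$ is $\mathbb{P}^1$-locally zero. Hence tensoring with it gives something $\mathbb{P}^1$-acyclic, and the map is an equivalence. Saturated classes are closed under colimits, so the map to the colimit is a projective motivic equivalence.
\item The colimit is Nisnevich local. Filtered colimits of Nisnevich-local spectra are Nisnevich local: for spectra this is the stable analogue of the cd-topology statement used in \Cref{model for pmot localization}, since Nisnevich descent is a finite-limit (excision) condition and therefore commutes with filtered colimits in a stable category. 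Apply this to the cofinal subsequence of $L^{sp}_{nis}$-terms.
\item The colimit is $\mathbb{P}^1$-local. The terms $\Phi^\infty_{\mathbb{P}}(\cdots)$ form a cofinal subsequence of $\mathbb{P}^1$-local objects. The condition $\mathcal{Y}(X)\simeq\mathcal{Y}(\mathbb{P}^1_X)$ is sectionwise and commutes with filtered colimits, because $\mathbb{P}^1_X$ is representable and evaluation preserves colimits.
\end{enumerate}

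Together these show that the colimit is the $\mathbb{P}^1$-Nisnevich localization.

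The main obstacle is item (1) for $\Phi_{\mathbb{P}}$: I need $\mathbb{P}\otimes(-)$ to land in $\mathbb{P}^1$-acyclics. I will argue this as follows. $\mathbb{P}$ is the cofiber of $\Sigma^\infty S_+\to\Sigma^\infty\mathbb{P}^1_{S+}$, which is split by the projection. Hence $\mathbb{P}$ is the fiber of a generating equivalence, so it is killed by the monoidal localization, and this localization is compatible with $\otimes$ by \Cref{model for stable p1 bousfield localization}. In the general-base case one must also make sure that $\Phi^\infty_{\mathbb{P}}$ on presheaf spectra computes the localization used here, which is presumably exactly what [\cite{nStbat}, Proposition 2.2.2] provides.
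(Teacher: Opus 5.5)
Your main argument is the paper's own proof written out in full. The paper also notes that $1\to L^{sp}_{nis}\to\dcolim_n(\Phi_{\mathbb{P}^1}L^{sp}_{nis})^n$ is a projective motivic equivalence with $\mathbb{P}^1$-local target. It then gets Nisnevich locality from cofinality of the $L^{sp}_{nis}$-terms together with stability of Nisnevich-local spectra under filtered colimits. Your items (1)--(3), with $\Phi_{\mathbb{P}^1}$ read as $\Phi^\infty_{\mathbb{P}}$, are a correct expansion of that argument.

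The one thing to fix is your aside on the single-step reading, which is false as stated. In $(\Phi_{\mathbb{P}}L^{sp}_{nis})^n$ there is exactly one $\Phi_{\mathbb{P}}$ between consecutive Nisnevich steps. So no cofinal subsequence consists of $\mathbb{P}^1$-local terms, and your item (3) does not apply.

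Under that reading a different argument is needed:
\begin{enumerate}
\item Apply $(-)^{\mathbb{P}}$ to the cofiber sequence $\mathbb{P}\otimes\mathcal{Y}^{\mathbb{P}}\to\mathcal{Y}\to\Phi_{\mathbb{P}}\mathcal{Y}$. The triangle identity for $\mathbb{P}\otimes(-)\dashv(-)^{\mathbb{P}}$ gives a section of $(\mathbb{P}\otimes\mathcal{Y}^{\mathbb{P}})^{\mathbb{P}}\to\mathcal{Y}^{\mathbb{P}}$, so $\mathcal{Y}^{\mathbb{P}}\to(\Phi_{\mathbb{P}}\mathcal{Y})^{\mathbb{P}}$ is null.
\item Hence every other transition map of the interleaved tower becomes null after applying $(-)^{\mathbb{P}}$.
\item The functor $(-)^{\mathbb{P}}$ commutes with filtered colimits, since $\mathcal{Y}^{\mathbb{P}}(U)\simeq\mathrm{fib}\bigl(\mathcal{Y}(\mathbb{P}^1_U)\xrightarrow{0^*}\mathcal{Y}(U)\bigr)$. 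So the colimit $\mathcal{C}$ satisfies $\mathcal{C}^{\mathbb{P}}\simeq 0$.
\item Because $0^*$ is a retraction of the pullback along $\mathbb{P}^1_U\to U$, the condition $\mathcal{C}^{\mathbb{P}}\simeq 0$ is exactly $\mathbb{P}^1$-locality.
\end{enumerate}

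A minor point: $\mathbb{P}\otimes\mathcal{Y}^{\mathbb{P}}$ is the fiber of $\mathcal{Y}\to\Phi_{\mathbb{P}}\mathcal{Y}$, not the cofiber; the cofiber is its suspension. This does not affect your acyclicity argument.
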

\begin{proof}
The composit $$1\to L_{nis}^{sp}\to \dcolim_{n}\big(\Phi_{\mathbb{P}^1} L_{nis}^{sp}\big )^n$$ is a projective motivic-equivalence with a $\mathbb{P}^1$-local target. It is thus enough to prove that for every $\mal{X}\in \mathcal{SH}_{S^1}$, the object $$\dcolim_{n}\big(\Phi_{\mathbb{P}^1} L_{nis}^{sp}\big )^n\mal{X}$$ is nisnevich local. This is again clear from the cofinality argument and the filtered colimit stability of Nisnevich local spectra (similar to \Cref{model for P1 localization}).  
\end{proof}

\subsection{Stable $\mathbb{P}^1$ connectivity}\mbox{}

In this subsection, let the base be the spectrum of a field. The following definition is the spectral analog of those in [\cite{ayoub2020P1}].
\begin{defn}
\begin{enumerate}
\item  A presheaf of spectra $\mal{A}$ is said to be $n$-preconnected if, for all essentially smooth schemes $\mathcal{O}/k$, the spectra $\mal{A}(\mathcal{O})$ is $n - \dim\mathcal{O}$-connected.
\item    A presheaf of spectra $\mal{A}$ is said to be generically or weakly $n$-connected if, for all $K\in \mathcal{F}_k$, the spectrum $\mal{A}(K)$ is $n$-connected.
\end{enumerate}
It is immediate from the definition that a pre connected spectra is weakly connected.
\end{defn}
\begin{lem}[Weak $n$-connectivity]\label[lem]{Lpmot takes stable connected spectra to weakly connected spectra}
Over any field, the projective motivic localization $L_{pmot}^{sp}$ takes connected spectra to weakly connected spectra.
\end{lem}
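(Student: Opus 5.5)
We use the model of \Cref{formua for p1 localization spectra}, $L^{sp}_{pmot}\simeq \dcolim_n(\Phi_{\mathbb{P}^1}L^{sp}_{nis})^n$. Evaluation at $K\in\mathcal{F}_k$ commutes with filtered colimits, and $n$-connected spectra are closed under filtered colimits. So it suffices to show that each finite stage $(\Phi_{\mathbb{P}^1}L^{sp}_{nis})^m\mal{A}$ is weakly $n$-connected whenever $\mal{A}$ is $n$-connected.

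The induction will not run through weak connectivity, because $\Phi_{\mathbb{P}^1}$ evaluated at $K$ involves $\mathbb{P}^1_K$, which is not a field. Instead we carry the stronger invariant of $n$-preconnectivity in the sense of the definition above, as in Ayoub's treatment of the derived case. We then prove two stability claims.

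\textbf{Claim (a): $L^{sp}_{nis}$ takes $n$-connected presheaves of spectra to $n$-preconnected ones.} The Nisnevich sheafification of an $n$-connected presheaf is Nisnevich $n$-connected. For an essentially smooth local (henselian or not) $\mathcal{O}$ of dimension $d$, the descent spectral sequence together with Nisnevich cohomological dimension $\le d$ gives that $L_{nis}\mal{A}(\mathcal{O})$ is $(n-d)$-connected. The same bound holds for arbitrary essentially smooth $\mathcal{O}$ of dimension $d$, again because the cohomological dimension is $\le d$. Preconnected inputs are handled the same way, giving that $L^{sp}_{nis}$ preserves $n$-preconnectivity up to the dimension shift already built into the definition. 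This is the step where we invoke [\cite{nStbat}] rather than redo the descent argument.

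\textbf{Claim (b): $\Phi_{\mathbb{P}^1}$ preserves $n$-preconnectivity.} By definition $\Phi_{\mathbb{P}}\mal{B}(\mathcal{O})$ is the cofiber of
\[
\mathbb{P}\otimes\mal{B}^{\mathbb{P}}\,(\mathcal{O})\longrightarrow \mal{B}(\mathcal{O}).
\]
Cofibers of maps into $(n-d)$-connected spectra from $(n-d-1)$-connected spectra are $(n-d)$-connected. So it suffices to bound the source $(\mathbb{P}\otimes\mal{B}^{\mathbb{P}})(\mathcal{O})$ from below by $n-d-1$.

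First, $\mal{B}^{\mathbb{P}}(\mathcal{O})$ is the fiber of $\mal{B}(\mathbb{P}^1_{\mathcal{O}})\to\mal{B}(\mathcal{O})$. The section splits this fiber, and $\mal{B}(\mathbb{P}^1_{\mathcal{O}})$ is controlled by preconnectivity on the essentially smooth scheme $\mathbb{P}^1_{\mathcal{O}}$ of dimension $d+1$. This gives connectivity $n-d-1$. Tensoring with the suspension spectrum $\mathbb{P}$ of a connected pointed sheaf does not lower presheaf-level connectivity.

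Combining the two bounds gives cofiber connectivity $\ge n-d-1$, which falls short of the required $n-d$ by one. To recover the lost degree I would use the pointwise presheaf structure. A presheaf tensor is computed sectionwise on $\mathcal{O}$ by the colimit over $Y\to \mathbb{P}^1\times\mathcal{O}$, and here the reduced suspension of $\mathbb{P}^1$ contributes one degree of connectivity, the same way $\Sigma$ does in Morel's argument. If that fails, I would instead weaken the invariant to ``$n$-preconnected on schemes of dimension $\le m$ with a loss of one per iteration.'' Restricting to $\mathcal{F}_k$ only requires $d=0$, and the tower is filtered, so a uniform bound at each finite stage still gives weak $n$-connectivity of the colimit.

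This bookkeeping of the dimension shift under $\Phi_{\mathbb{P}^1}$ is the main obstacle. Claim (a) and the passage to the colimit are routine.
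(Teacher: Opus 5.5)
Your strategy is the paper's. The paper simply invokes [\cite{nStbat}, Lemma 2.3.3] (localization at a one-dimensional scheme with a rational point sends Nisnevich-connected spectra to preconnected ones) and then notes that preconnected implies weakly connected. Your Claims (a) and (b) supply the content the paper outsources to that citation, with the rational point and $\dim\mathbb{P}^1=1$ entering exactly in (b).

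However, the one-degree shortfall you diagnose in (b) is a miscount, not a gap. By the cofiber lemma you stated yourself, the cofiber of a map from an $(n-d-1)$-connected spectrum to an $(n-d)$-connected one is $(n-d)$-connected, since it is an extension of $\Sigma(\text{source})$ by the target. So your bound on $(\mathbb{P}\otimes\mal{B}^{\mathbb{P}})(\mathcal{O})$ already proves (b): the degree lost in passing from $\mathcal{O}$ to $\mathbb{P}^1_{\mathcal{O}}$ is exactly absorbed by the suspension in the cofiber.

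Discard both fallback fixes, since neither would work. First, at the presheaf level $\mathbb{P}^1$ is discrete. Hence $(\mathbb{P}\otimes\mal{C})(\mathcal{O})$ is a wedge of copies of $\mal{C}(\mathcal{O})$, one for each map $\mathcal{O}\to\mathbb{P}^1$ other than $0$, and tensoring with $\mathbb{P}$ gains no degree. Second, an invariant that loses one degree per iteration gives stage $m$ only $(n-m)$-connectivity on fields. That bound is not uniform in $m$, so the colimit would have no connectivity bound at all.

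Finally, the phrase ``preconnected inputs are handled the same way'' in (a) hides the one real subtlety. From the second stage on you apply $L^{sp}_{nis}$ to preconnected rather than connected presheaves. There, cohomological dimension $\le\dim X$ alone only gives $(n-2\dim X)$-connectivity. What you need is that $\pi_q^{nis}$ of an $n$-preconnected presheaf, restricted to $X_{Nis}$, is supported in codimension $\ge n-q$. Then $H^p_{nis}(X,\pi_q^{nis})=0$ for $p>\dim X-(n-q)$, and the descent spectral sequence gives the sharp bound $n-\dim X$.
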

\begin{proof}
 Since $\mathbb{P}^1$ is a one-dimensional scheme with a rational point, it follows from [\cite{nStbat}, Lemma 2.3.3] that $L_{pmot}^{sp}$ takes nisnevich-connected spectra to pre-connected spectra, which are then weakly connected, as observed before.
\end{proof}
\begin{cor}\label[cor]{weakly connected to connected}
    Over a field, a weakly connected $\mathbb{P}^1$-motivic spectrum is Nisnevich connected. 
\end{cor}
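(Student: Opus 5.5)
The plan is to reduce the statement to the unstable generic detection result, \Cref{P1 connected on F_k}, by passing to infinite loop spaces. Let $\mal{A}$ be a $\mathbb{P}^1$-motivic (that is, $\mathbb{P}^1$-local and Nisnevich local) $S^1$-spectrum which is weakly $n$-connected. We must show that $\pi_m^{sp,nis}\mal{A}=0$ for all $m\leq n$. After shifting, it suffices to show that $\pi_0^{sp,nis}\mal{A}'=0$ for every $\mathbb{P}^1$-motivic spectrum $\mal{A}'$ with $\pi_0(\mal{A}'(K))=0$ for all $K\in\mathcal{F}_k$. Indeed, $\Sigma^{-m}\mal{A}=\Omega^{m}\mal{A}$ is again $\mathbb{P}^1$-local and Nisnevich local, since both properties are stable under the shifts in a stable localization. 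It also has the shifted values on fields.

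Next take $\esc{G}:=\Omega^\infty_{S^1}\mal{A}'$. This is a Nisnevich local grouplike $E_\infty$-space, in particular an $H$-group-space. It is $\mathbb{P}^1$-local because $\Omega^\infty$ is computed sectionwise and $\mal{A}'(X)\to\mal{A}'(\mathbb{P}^1_X)$ is an equivalence; hence $\esc{G}$ is a projective motivic space. Moreover $\pi_0\esc{G}=\pi_0^{sp}\mal{A}'$ as presheaves, and therefore $\pi_0^{nis}\esc{G}=\pi_0^{sp,nis}\mal{A}'$. Evaluating on fields, which are Nisnevich points when regarded as essentially smooth henselian local schemes, we get $\pi_0^{nis}\esc{G}(K)=\pi_0\mal{A}'(K)=0$ for each $K\in\mathcal{F}_k$.

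Now apply \Cref{P1 connected on F_k} to the projective motivic space $\esc{G}$. It gives that $\pi_0^{nis}\esc{G}$ is trivial, so $\pi_0^{sp,nis}\mal{A}'=0$. Running over all $m\leq n$ yields Nisnevich $n$-connectivity of $\mal{A}$.

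The main obstacle is that \Cref{P1 connected on F_k} rests on \Cref{zar stalk in generic pi0}, which was proved over general (possibly imperfect) $k$ via Gabber's lemma. I would double-check that no perfectness hypothesis was needed there, since the corollary is stated over any field. The other point to check is the identification of values on $K\in\mathcal{F}_k$ with stalks, namely that $\mal{A}'(K)$, defined as a colimit over models, agrees with the Nisnevich stalk. This holds because the colimit is filtered and commutes with $\pi_0$.
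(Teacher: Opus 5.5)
Your proposal is correct and is essentially the paper's argument. The paper reruns the Gabber-type proof of \Cref{zar stalk in generic pi0} directly on the presheaf of abelian groups $\pi_0^{sp}\mal{A}$, so that ``trivial fiber'' becomes injectivity of $\pi_0^{sp}\mal{A}(X_x)\to\pi_0^{sp}\mal{A}(\eta)$; you package the same input by passing to the pointed $H$-group $\Omega^\infty_{S^1}\mal{A}'$ and citing \Cref{P1 connected on F_k}, which needs only trivial fiber over the unit because the generic value vanishes. Your shift reduction makes explicit the degrees $m\le n$ that the paper's proof leaves implicit (it only shows $\pi_0^{sp,nis}\mal{A}=0$), and your perfectness caveat applies equally to the paper's proof, since it rests on the same lemma.
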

\begin{proof}
    The same proof as in \Cref{zar stalk in generic pi0} can be copied. In the third section of the proof of referred result, we apply the Nisnevich square to the presheaf $\pi_0^{sp}\mal{A}$ to obtain the injective map $$\pi_0^{sp}\mal{A}(X_x)\to \pi_0^{sp}\mal{A}(\eta)$$ (injectivity follows because all the maps are now group homomorphisms). By weak connectivity, we find that for any local ring $X_x$, $\pi_0^{sp}\mal{A}(X_x)=0$. This implies that $\pi_0^{sp,nis}\mal{A}=0$.
\end{proof}
\begin{cor}\label[cor]{projective stable connectivity}
    Over any field $k$, the projective motivic localization functor $L_{pmot}^{sp}$ for $S^1$-spectra preserves nisnevich $n$-connectivity. 
\end{cor}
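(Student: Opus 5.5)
The plan is to combine \Cref{Lpmot takes stable connected spectra to weakly connected spectra} with \Cref{weakly connected to connected}. We shift so that only $0$-connectivity matters. Since $L_{pmot}^{sp}$ is an exact functor of stable $\infty$-categories, it commutes with the suspension $\Sigma$. Hence for a Nisnevich $n$-connected spectrum $\mal{A}$ we may replace $\mal{A}$ by $\Sigma^{-n}\mal{A}$, which is Nisnevich $0$-connected (connective). So it suffices to treat the case $n=0$, in the indexing convention used for ``connected'' in \Cref{Lpmot takes stable connected spectra to weakly connected spectra}.

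Now let $\mal{A}$ be Nisnevich connected and set $\mal{B}:=L_{pmot}^{sp}\mal{A}$. By \Cref{Lpmot takes stable connected spectra to weakly connected spectra}, $\mal{B}$ is weakly connected, i.e.\ $\mal{B}(K)$ is connected for every $K\in\mathcal{F}_k$. Moreover $\mal{B}$ is a $\mathbb{P}^1$-motivic spectrum: it is $\mathbb{P}^1$-local and Nisnevich local by \Cref{formua for p1 localization spectra}. Applying \Cref{weakly connected to connected} then gives that $\mal{B}$ is Nisnevich connected, which is the claim.

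The only point needing care is the connectivity bookkeeping. We must check that the notion of ``connected'' in the two cited results matches Nisnevich $n$-connectivity after the shift. In particular, $\pi_j^{sp,nis}\mal{B}=0$ should hold for all $j<0$, not merely $j=-1$. To handle this I would apply \Cref{weakly connected to connected} to each shift $\Sigma^{m}\mal{B}$ with $m\geq 1$. This is legitimate because shifts of $\mathbb{P}^1$-motivic spectra are again $\mathbb{P}^1$-motivic, and weak connectivity is inherited by $\Sigma^m\mal{B}$. Each application kills one more negative homotopy sheaf.

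I expect the main obstacle to be the input of \Cref{weakly connected to connected} itself. Its proof runs the Gabber-type argument of \Cref{zar stalk in generic pi0}, including the finite-field induction, for the abelian presheaf $\pi_j^{sp}\mal{B}$, using the Nisnevich square for $\mathbb{P}^1_S\setminus Z\subset\mathbb{P}^1_S$. Since that result is already available, the proof reduces to the formal reduction above.
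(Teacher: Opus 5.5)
Your proposal is correct and follows the paper's proof: you reduce to the connected case using exactness of $L_{pmot}^{sp}$, then combine the weak $n$-connectivity lemma with the corollary that weakly connected $\mathbb{P}^1$-motivic spectra are Nisnevich connected. You also apply that corollary to the shifts $\Sigma^m L_{pmot}^{sp}\mal{A}$ to kill every negative homotopy sheaf; this is valid bookkeeping that the paper leaves implicit, since its proof of the corollary only treats $\pi_0^{sp}$.
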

\begin{proof}
    Since $L_{pmot}^{sp}$ is an exact functor, it suffices to show that it preserves connected spectra. By \Cref{Lpmot takes stable connected spectra to weakly connected spectra}, $L_{pmot}^{sp}$ takes connected spectra to weakly connected $\mathbb{P}^1$-motivic spectra, which are Nisnevich connected by \Cref{weakly connected to connected}.
\end{proof}
\begin{prop}\label[prop]{t structure for P1 motivic spectras}
The pair of full subcategories $$(\mathcal{SH}^{\mathbb{P}^1}_{S^1}\bigcap \mathcal{SH}_{S^1,\geq 0},\mathcal{SH}^{\mathbb{P}^1}_{S^1}\bigcap \mathcal{SH}_{S^1,\leq 0})$$ is an accessible complete $t$-structure on $\mathcal{SH}^{\mathbb{P}^1}_{S^1}$. The heart of this $t$-structure is identical to the category of strictly $\mathbb{P}^1$-invariant sheaves of abelian groups.
\end{prop}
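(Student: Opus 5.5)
The plan is to deduce the $t$-structure formally from the stable connectivity result \Cref{projective stable connectivity}, following the standard pattern for the homotopy $t$-structure on $\mathbb{A}^1$-local $S^1$-spectra (as in [\cite{nStbat}, \S4]). Write $\mathcal{C}=\mathcal{SH}_{S^1}^{nis}(k)$ with its Nisnevich homotopy $t$-structure $(\mathcal{C}_{\geq 0},\mathcal{C}_{\leq 0})$. This $t$-structure is accessible, right complete and, since $k$ has finite Krull dimension so that Nisnevich cohomological dimension is bounded, left complete as well. Let $L=L^{sp}_{pmot}$ be the localization onto the full subcategory $\mathcal{SH}^{\mathbb{P}^1}_{S^1}$.

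\textbf{Step 1: stability of $\mathcal{SH}^{\mathbb{P}^1}_{S^1}$ under truncations.} We first show that $\mathcal{SH}^{\mathbb{P}^1}_{S^1}\cap\mathcal{C}_{\leq 0}$ is the right orthogonal to $L(\mathcal{C}_{\geq 1})$. This reduces to showing that $\tau_{\geq 0}$ and $\tau_{\leq 0}$ preserve $\mathbb{P}^1$-local objects. Given $\mal{A}$ that is $\mathbb{P}^1$-local, consider the cofiber sequence $\tau_{\geq 1}\mal{A}\to\mal{A}\to\tau_{\leq 0}\mal{A}$ and apply $L$. By \Cref{projective stable connectivity}, $L\tau_{\geq 1}\mal{A}$ is $1$-connected. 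Since $L\mal{A}\simeq\mal{A}$, we get a cofiber sequence $L\tau_{\geq1}\mal{A}\to\mal{A}\to L\tau_{\leq0}\mal{A}$.

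The key claim is that $L\tau_{\leq 0}\mal{A}$ is still $0$-truncated. We argue this by adjunction. For $\mal{B}\in\mathcal{C}_{\geq 1}$, consider the maps $\mal{B}\to L\tau_{\leq0}\mal{A}$. We would rather test against $\mathbb{P}^1$-local $1$-connective objects. Alternatively, and this is the route I would take, we show directly that $\tau_{\leq 0}\mal{A}$ is $\mathbb{P}^1$-local. The map $\mal{A}\to\tau_{\leq 0}\mal{A}$ factors through $L\tau_{\leq0}\mal{A}$, and comparing the two cofiber sequences $\tau_{\geq1}\mal{A}\to\mal{A}\to\tau_{\leq0}\mal{A}$ and $L\tau_{\geq1}\mal{A}\to\mal{A}\to L\tau_{\leq0}\mal{A}$, connectivity gives $\tau_{\leq 0}L\tau_{\leq0}\mal{A}\simeq\tau_{\leq0}\mal{A}$. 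The fiber of $\tau_{\geq1}\mal{A}\to L\tau_{\geq1}\mal{A}$ is then identified with $\Omega$ of the fiber of $L\tau_{\leq 0}\mal{A}\to\tau_{\leq0}\mal{A}$. The latter fiber is $\tau_{\geq1}L\tau_{\leq0}\mal{A}$, and it is $\mathbb{P}^1$-acyclic because it sits in a sequence of $L$-equivalences. Hence $\tau_{\geq 1}L\tau_{\leq0}\mal{A}$ is both $L$-acyclic and, being a fiber of a map from a local object, sits in a fiber sequence whose terms we must control.

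This is the main obstacle. The cleanest way through it is the standard lemma [\cite{nStbat}, \S4]: if $L$ preserves $\mathcal{C}_{\geq 0}$, then $L\mathcal{C}$ is closed under $\tau_{\geq0}$. I would invoke that lemma directly. Its proof runs as follows. For local $\mal{A}$, the object $L\tau_{\geq 0}\mal{A}$ is connective, and its map to $\mal{A}$ factors through $\tau_{\geq0}\mal{A}$, giving a retraction that forces $\tau_{\geq0}\mal{A}\simeq L\tau_{\geq0}\mal{A}$. Concretely, $\mathrm{Map}(\tau_{\geq0}\mal{A},-)$ restricted to local objects is corepresented by $L\tau_{\geq0}\mal{A}$, while $\mathrm{Map}(\mal{C},\tau_{\geq0}\mal{A})\simeq\mathrm{Map}(\mal{C},\mal{A})$ for connective $\mal{C}$. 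Closure under $\tau_{\leq 0}$ then follows from the cofiber sequence.

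\textbf{Step 2: the $t$-structure and its properties.} Once $\mathcal{SH}^{\mathbb{P}^1}_{S^1}$ is closed under $\tau_{\geq 0}$ and $\tau_{\leq 0}$, the restricted pair is a $t$-structure. The orthogonality axiom is inherited from $\mathcal{C}$, shifts are clear, and the truncation sequences stay in the subcategory. Accessibility holds because $\mathcal{SH}^{\mathbb{P}^1}_{S^1}\cap\mathcal{C}_{\geq0}$ is generated under colimits by the $L\Sigma^{\infty}X_+$, which are connective by \Cref{projective stable connectivity}. Completeness follows because limits of local objects are local, and $\tau_{\leq n}$ commutes with the inclusion.

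\textbf{Step 3: the heart.} The heart consists of Nisnevich sheaves of abelian groups $A$, viewed as $HA$, that are $\mathbb{P}^1$-local. Since $\mathrm{Map}(\Sigma^\infty X_+,\Sigma^i HA)$ computes $H^i_{nis}(X,A)$, locality is exactly the $\mathbb{P}^1$-invariance of all $H^i_{nis}(-,A)$, that is, strict $\mathbb{P}^1$-invariance.
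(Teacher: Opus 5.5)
Your overall route is the paper's. You deduce everything formally from the stable connectivity corollary via the standard lemma that a localization preserving $\mathcal{C}_{\geq 0}$ restricts the homotopy $t$-structure, which the paper simply cites as [\cite{nStbat}, Proposition 4.1.1]. Several of your steps are correct as written:
\begin{itemize}
\item the retraction argument showing $\tau_{\geq 0}\mal{A}\simeq L\tau_{\geq 0}\mal{A}$ for local $\mal{A}$, with closure under $\tau_{\leq 0}$ then following from the cofiber sequence;
\item the accessibility argument;
\item the identification of the heart via $\pi_{-i}\mathrm{map}(\Sigma^\infty X_+,HA)=H^i_{nis}(X,A)$.
\end{itemize}
The exploratory part of Step 1, before you invoke the lemma, leads nowhere and can be deleted.

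The one real gap is completeness. Your justification is that limits of local objects are local and that the inclusion commutes with $\tau_{\leq n}$. That transfers \emph{left} completeness from $\mathcal{C}$, but it says nothing about \emph{right} completeness, which the paper explicitly treats as part of the claim. For right completeness you must show that a compatible family $\mal{X}_n\in\mathcal{SH}^{\mathbb{P}^1}_{S^1}\cap\mathcal{C}_{\geq -n}$, with $\tau_{\geq -n}\mal{X}_{n+1}\simeq\mal{X}_n$, is realized by a $\mathbb{P}^1$-local object. In $\mathcal{C}$ it is realized by the sequential colimit $\colim_n\mal{X}_n$, so you need $\mathcal{SH}^{\mathbb{P}^1}_{S^1}\subset\mathcal{C}$ to be closed under filtered colimits. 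Alternatively, you need coproducts in $\mathcal{SH}^{\mathbb{P}^1}_{S^1}$ to preserve the coconnective part; a priori these coproducts are $L$ of coproducts in $\mathcal{C}$. Limits do not help here.

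The missing input holds for a simple reason. Filtered colimits of Nisnevich sheaves of spectra are computed sectionwise. Both Nisnevich excision and the condition $\mal{A}(X)\simeq\mal{A}(\mathbb{P}^1_X)$ are finite-limit conditions that filtered colimits of spectra preserve; equivalently, the generators $\Sigma^\infty X_+$ are compact. This is exactly the extra observation the paper adds for right completeness. Once you include it, your argument is complete.
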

\begin{proof}
    This follows from [\cite{nStbat}, Proposition 4.1.1], using Remark 4.1.2 in loc. cit. and the stable connectivity of $\mathbb{P}^1$ \Cref{projective stable connectivity}. For right completeness, observe that because Nisnevich excision is stable under filtered colimits, $\mathcal{SH}^{\mathbb{P}^1}_{S^1}(k)\subset\mathcal{SH}_{S^1}(k)$ is closed under all filtered colimits. The identification of the heart follows from the same Proposition of \textit{loc.cit.} referred to above.
\end{proof}
\begin{thm}\label{LAYERS OF P1 SPECTRA}
The following are equivalent for a nisnevich sheaf $\mal{A}$ of spectras on $k$
    \begin{enumerate}
        \item $\mal{A}$ is $\mathbb{P}^1$-local.
        \item All Nisnevich connective covers $\mal{A}_{\geq n}$ are $\mathbb{P}^1$-local.
        \item All nisnevich sheaves of homotopy groups $\pi_i^{sp,nis}\mal{A}$ are strictly $\mathbb{P}^1$-invariant.
    \end{enumerate}
  \end{thm}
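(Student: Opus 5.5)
The plan is to derive everything from the $t$-structure of \Cref{t structure for P1 motivic spectras}, its completeness, and the stable connectivity theorem \Cref{projective stable connectivity}. The implications are $(1)\Rightarrow(2)$, $(2)\Rightarrow(3)$ and $(3)\Rightarrow(1)$.

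For $(1)\Rightarrow(2)$, let $\mal{A}$ be $\mathbb{P}^1$-local. Since the pair in \Cref{t structure for P1 motivic spectras} is a $t$-structure on $\mathcal{SH}^{\mathbb{P}^1}_{S^1}$ whose aisle is $\mathcal{SH}^{\mathbb{P}^1}_{S^1}\cap\mathcal{SH}_{S^1,\geq 0}$, its truncation functors are the restrictions of the Nisnevich ones. Concretely, I would check this directly. Apply $L_{pmot}^{sp}$ to the fiber sequence $\mal{A}_{\geq n}\to\mal{A}\to\mal{A}_{\leq n-1}$. By \Cref{projective stable connectivity}, $L^{sp}_{pmot}\mal{A}_{\geq n}$ is $n$-connective. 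There is a map $L^{sp}_{pmot}\mal{A}_{\geq n}\to \mal{A}$, since $\mal{A}$ is local, and this map factors through $\mal{A}_{\geq n}$ by the universal property of connective covers. Because $\mal{A}_{\geq n}\to L^{sp}_{pmot}\mal{A}_{\geq n}\to\mal{A}_{\geq n}$ is the identity, $\mal{A}_{\geq n}$ is a retract of a $\mathbb{P}^1$-local object, and hence is $\mathbb{P}^1$-local. Stability then makes $\mal{A}_{\leq n-1}$ local as well, being a cofiber of local objects.

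For $(2)\Rightarrow(3)$, (2) together with stability shows that each layer $\Sigma^n\pi_n^{sp,nis}\mal{A}$, the cofiber of $\mal{A}_{\geq n+1}\to\mal{A}_{\geq n}$, is $\mathbb{P}^1$-local. Shifting, the Eilenberg–MacLane spectrum $H\pi_n^{sp,nis}\mal{A}$ is $\mathbb{P}^1$-local. By the identification of the heart in \Cref{t structure for P1 motivic spectras}, this says precisely that $\pi_n^{sp,nis}\mal{A}$ is strictly $\mathbb{P}^1$-invariant: $[\Sigma^\infty X_+,\Sigma^iHA]=H^i_{nis}(X,A)$, so locality is equivalent to $\mathbb{P}^1$-invariance of all $H^i_{nis}(-,A)$.

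For $(3)\Rightarrow(1)$, the Eilenberg–MacLane layers are local by the same identification. Each Postnikov truncation $\mal{A}_{\leq n}\cap\mal{A}_{\geq m}$ is a finite extension of such layers and is therefore local. Let $m\to-\infty$, so that $\mal{A}_{\leq n}=\colim_m(\mal{A}_{\leq n})_{\geq m}$. This colimit is filtered, and $\mathcal{SH}^{\mathbb{P}^1}_{S^1}(k)$ is closed under filtered colimits, as noted in the proof of \Cref{t structure for P1 motivic spectras}, so $\mal{A}_{\leq n}$ is local. Then let $n\to\infty$: Postnikov completeness of Nisnevich sheaves of spectra over $k$ (finite Krull dimension) gives $\mal{A}\simeq\lim_n\mal{A}_{\leq n}$, and local objects are closed under limits.

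The main obstacle is $(1)\Rightarrow(2)$, specifically producing the factorization $L^{sp}_{pmot}\mal{A}_{\geq n}\to\mal{A}_{\geq n}$. This is exactly where connectivity preservation enters, and it is used in the form ``$L^{sp}_{pmot}$ of an $n$-connective object is $n$-connective.'' I would rely on \Cref{projective stable connectivity} together with the exactness of $L^{sp}_{pmot}$ for this.
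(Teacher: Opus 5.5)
Your proposal is correct and is essentially the paper's argument. The paper simply invokes the $t$-structure of \Cref{t structure for P1 motivic spectras}, which rests on \Cref{projective stable connectivity}, together with the standard Postnikov methods of [\cite{nStbat}, Theorem 4.1.8]. Your three steps spell those methods out: the retract argument for $(1)\Rightarrow(2)$, which is the only place connectivity is needed; the Eilenberg--MacLane layer computation for $(2)\Rightarrow(3)$; and the right/left completeness argument for $(3)\Rightarrow(1)$.
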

  
\begin{proof}
    In the presence of \Cref{t structure for P1 motivic spectras}, this follows at once from the methods of [\cite{nStbat}, Theorem 4.1.8]. 
\end{proof}

\noindent\rule{\textwidth}{0.4pt}\\[1ex] 
\noindent{\large\textbf{\textit{Acknowledgments}:}} 

I thank Dr. Utsav for mentioning Ayoub's article on $\mathbb{P}^1$-localization [\cite{ayoub2020P1}] to me during a conversation.

\phantomsection
\bibliographystyle{amsalpha}	
\renewcommand\refname{Bibliography}
\bibliography{references}
\noindent\rule{\textwidth}{0.4pt} 
\end{document}